\documentclass[12pt, letterpaper]{amsart}
\usepackage[left=1in,right=1in,bottom=0.5in,top=0.8in]{geometry}
\usepackage{amsfonts}
\usepackage{amsmath, amssymb}
\usepackage{graphicx}
\usepackage[font=small,labelfont=bf]{caption}
\usepackage{epstopdf}
\usepackage{xcolor}
\usepackage{amsthm}
\usepackage{float}
\usepackage{pgfplots}
\usepackage{listings}
\usepackage{longtable}
\usepackage{mathrsfs}
\usepackage{bbold}
\usepackage{comment}
\usepackage{esint}
\usetikzlibrary{arrows, patterns, patterns.meta, calc, math}

\usepackage{mathtools}
\usepackage{scalerel}[2014/03/10]
\usepackage[usestackEOL]{stackengine}
\usepackage[utf8]{inputenc}
\usepackage[T1]{fontenc}
\usepackage[shortlabels, inline]{enumitem}
\DeclarePairedDelimiterX\set[1]\lbrace\rbrace{#1}
\usepackage{nicefrac}

\usepackage{scalerel}[2014/03/10]
\usepackage[usestackEOL]{stackengine}
\usepackage{enumitem}

\usepackage[backref=page]{hyperref}
\hypersetup{
plainpages=false,
colorlinks,
linkcolor={cyan!90!black},
citecolor={magenta},
urlcolor={red!90!black},
bookmarksdepth=2,}

\hypersetup{
pdftitle={Boundedness of the parabolic Riesz Transform},
pdfsubject={Mathematics, PDE, Analysis},
pdfauthor={Khalid Baadi},
pdfkeywords={}
}

\newtheorem{thm}{Theorem}[section]
\newtheorem{cor}[thm]{Corollary}
\newtheorem{prop}[thm]{Proposition}
\newtheorem{lem}[thm]{Lemma}

\theoremstyle{definition}
\newtheorem{defn}[thm]{Definition}

\theoremstyle{remark}
\newtheorem{rem}[thm]{Remark}
\newtheorem{rems}[thm]{Remarks}

\definecolor{Khalid}{rgb}{0.8, 0.1, 0.1}

\newcommand{\D}{\mathbb{D}}
\renewcommand{\L}{\mathrm{L}}

\newcommand{\Cont}{\mathrm{C}}

\renewcommand{\d}{\,\mathrm{d}}
\newcommand{\dd}{\,\mathrm{d}}
\newcommand{\E}{\mathcal{E}}

\newcommand{\R}{\mathbb{R}}
\newcommand{\C}{\mathbb{C}}
\newcommand{\norm}[1]{\Vert#1\Vert}
\newcommand{\abs}[1]{\vert#1\vert}
\newcommand{\one}{\boldsymbol{1}}
\newcommand{\F}{\mathcal{F}}
\renewcommand{\H}{\mathcal{H}}

\colorlet{shadecolor}{gray!20}
\pgfplotsset{compat=1.9}

\usepgflibrary{fpu}
\makeatletter
\let\c@equation\c@thm
\makeatother
\numberwithin{equation}{section}

\author{Khalid Baadi}
\address{Universit{\'e} Paris-Saclay, CNRS, Laboratoire de Math\'{e}matiques d'Orsay, 91405 Orsay, France}
\email{khalid.baadi@universite-paris-saclay.fr}
\keywords{Parabolic Riesz transform, off-diagonal estimates, Muckenhoupt and reverse H\"older weight classes, half-order derivative, Blunck--Kunstmann extrapolation}
\date{\today}
\subjclass[2010]{Primary: 42B20, 35K65, 42B37 Secondary: 26A33, 42B99.}
\title[Degenerate Parabolic Riesz Transforms]{A Boundedness Result for Degenerate Parabolic Riesz Transforms with Rough Coefficients}

\begin{document}

\begin{abstract}
In this paper, we prove a boundedness result for parabolic Riesz transforms in the range $p \le 2$ associated with degenerate parabolic operators whose elliptic part is in divergence form with complex coefficients depending measurably on space and time. The degeneracy is determined by a spatial weight in the Muckenhoupt class $A_2(\R^n)$. The argument relies on off-diagonal estimates for the parabolic gradient of the resolvent family, together with weighted embeddings. We obtain boundedness in a range of exponents strictly below $2$, with explicit ranges quantified by the reverse H\"older class of the weight. For real coefficients, this quantification is sharp and extends down to $1$.
\end{abstract}

\maketitle

\setcounter{tocdepth}{1}
\tableofcontents

\section{Introduction}

In the variables $(x,t) \in \R^n \times \R$, we consider degenerate parabolic operators of the form
\begin{equation*}
    \H :=  \partial_t  -\omega^{-1} \mathrm{div}_x(A\nabla_x ),
\end{equation*}
where $A = A(x,t)$ is a  matrix-valued function with complex measurable coefficients and the weight $\omega=\omega(x)$ belongs to the Muckenhoupt class $A_2(\R^n)$. The degeneracy is governed by $\omega$, in the sense that $\omega^{-1}A$ satisfies the classical boundedness and accretivity conditions.

We state the main result of this paper below. Precise definitions will be provided in the following sections. For all $p \in  [1,\infty)$, we define the measures $\mu_p$ on $\R^{n+1}$ by
\begin{equation}\label{eq: mu_p}
    \d \mu_p(x,t) := \omega^{\frac{p}{2}}(x) \d x \d t.
\end{equation}
The operator $\H$ can be shown to be maximal accretive on $\L^2_{\mu_2}(\R^{n+1})$, so that one can use its uniformly bounded resolvents on $\L^2_{\mu_2}(\R^{n+1})$:
\begin{equation*}
    \E_\lambda := (1+\lambda^2 \H)^{-1}, \quad \lambda>0.
\end{equation*}
We now introduce the exponent:
\begin{align*}
    p_-(\H) 
    &:= \inf \left\{ p \in ( 1,\infty) : \ (\mathcal{E}_\lambda)_{\lambda > 0} \text{ extends to a uniformly bounded family on } \L^p_{\mu_p}(\R^{n+1})   \right. \\
    &\hspace{4.5cm} \left. \& \ \  \mu_{p'} \text{ is locally finite on } \R^{n+1} \right\}. \notag
\end{align*}
By $p'$ we mean the H\"older conjugate of $p$. Note that $\mu_{p'}$ is locally finite on $\R^{n+1}$ if and only if $\omega^{\frac{p'}{2}}$ is locally integrable on $\R^n$.

\begin{thm}\label{thm: théorème principal}
Let $n \ge 1$ and $\omega \in A_2(\R^n)$. The following assertions hold. 
\begin{enumerate}[label=(\arabic*), leftmargin=*, itemsep=1pt]
    \item \label{Point (1)} \textbf{Range of exponents:} We have $p_-(\H)\in [1,2)$ with an upper bound depending only on $[\omega]_{A_2}$ and $n$. In addition, for every $q \in (2_\star,2)$, with $2_\star=2-\frac{4}{n+4}\in (1,2)$,
    \begin{equation*}
    \omega \in RH_{\frac{q'}{2}}(\R^n)\quad \Longrightarrow \quad p_-(\H) < q .
    \end{equation*}
    In the endpoint case $q = 2_\star$, if $\omega \in RH_{1+\frac{2}{n}}(\R^n)$, then $p_-(\H) \le 2_\star$.
    \item \label{Point (2)} \textbf{Boundedness of the parabolic Riesz transform:} For every $p \in (p_-(\H),2]$, the parabolic Riesz transform
    \begin{equation*}
    \mathcal{R}_{\H} = (\nabla_x \H^{-1/2}, D_t^{1/2}\H^{-1/2})
    \end{equation*}
    extends to a bounded operator on $\L^p_{\mu_p}(\R^{n+1})$.
    \item \label{Point (3)} \textbf{Case of real coefficients:} If $A$ has real coefficients, then for every $q \in (1,2]$,
    \begin{equation*}
        p_-(\H) < q \quad \Longleftrightarrow \quad \omega \in RH_{\frac{q'}{2}}(\R^n).
    \end{equation*}
    In particular, $p_-(\H) = 1$ if and only if
    \begin{equation*}
         \omega \in \bigcap_{q \in (1,2]} RH_{\frac{q'}{2}}(\R^n)=: RH_\star(\R^n).
    \end{equation*}
    Moreover, if $\omega \in RH_{\infty}(\R^n)\subsetneq RH_\star(\R^n)$, then $p_-(\H) = 1$, and the spatial gradient component $\omega^{\frac{1}{2}} \nabla_x \H^{-1/2}$ extends to a bounded operator from $\L^1_{\mu_1}(\R^{n+1})$ into $\L^{1,\infty}(\R^{n+1})$.
\end{enumerate}
\end{thm}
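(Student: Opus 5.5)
We will prove the three assertions in turn. The whole argument rests on one tool: $\L^2_{\mu_2}$ off-diagonal estimates for the resolvent family $\E_\lambda$ and for its parabolic gradient $(\lambda \nabla_x \E_\lambda, \lambda D_t^{1/2}\E_\lambda)$, measured with respect to the parabolic metric $\max(\abs{x-y},\abs{t-s}^{1/2})$.

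\textbf{Step 1: the range of exponents in \ref{Point (1)}.} We first derive these off-diagonal bounds from the accretivity of $\H$, using Davies' perturbation method. We conjugate by $e^{\phi(x)}$ in space and treat the time variable through a separate exponential weight.

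Next we upgrade $\L^2_{\mu_2}\to\L^2_{\mu_2}$ bounds to $\L^p_{\mu_p}\to\L^2_{\mu_2}$ bounds, for $p$ slightly below $2$. For this we use a weighted parabolic Sobolev embedding: the energy $\norm{\nabla_x u}_{\L^2_{\mu_2}}+\norm{D_t^{1/2}u}_{\L^2_{\mu_2}}$ controls $u$ in a higher space $\L^{r}$ on parabolic cylinders.
\begin{itemize}
    \item For general $\omega\in A_2$, we use the weighted Sobolev inequality of Fabes--Kenig--Serapioni. It gives a gain $r=2+\varepsilon$ with $\varepsilon$ depending only on $[\omega]_{A_2}$ and $n$.
    \item When $\omega\in RH_{q'/2}$, we use Hölder's inequality instead. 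The $\L^q_{\mu_q}$ norm, whose density is $\omega^{q/2}$, is compared with the $\L^2_{\mu_2}$ norm through $\omega^{(q'/2)}$ averages. We combine this with the unweighted parabolic Sobolev embedding of dimension $n+2$ coming from $\nabla_x$ and $D_t^{1/2}$.
\end{itemize}
The threshold $2_\star=2-\frac{4}{n+4}$ is exactly the exponent where the $\L^2$ energy embeds. The endpoint case $q=2_\star$, which uses $RH_{1+2/n}$, is then a limiting Sobolev case.

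By duality these bounds become $\L^2\to\L^{p'}$ estimates. Summing over annuli then gives uniform boundedness of $\E_\lambda$ on $\L^p_{\mu_p}$. Local finiteness of $\mu_{p'}$ holds because $\omega^{p'/2}\in\L^1_{\mathrm{loc}}$ follows from the reverse Hölder property.

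\textbf{Step 2: the Riesz transform in \ref{Point (2)}.} The case $p=2$ follows from the $\L^2_{\mu_2}$ theory of $\H$: the hidden coercivity gives $\norm{\nabla_x u}+\norm{D_t^{1/2}u}\lesssim\norm{\H^{1/2}u}$.

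For $p\in(p_-(\H),2)$ we apply the Blunck--Kunstmann extrapolation theorem on the doubling space $(\R^{n+1},\mu_p)$ with parabolic balls. We take the approximating operators $A_r=I-(I-\E_r)^m$, with $m$ large. The required inputs are:
\begin{itemize}
    \item the $\L^{p_0}\to\L^2$ off-diagonal bounds for $\E_r$ from Step 1, for some $p_0\in(p_-(\H),p)$;
    \item $\L^2$ off-diagonal bounds for $\mathcal{R}_\H(I-\E_r)^m$, obtained by writing $\H^{-1/2}$ through its resolvent integral representation.
\end{itemize}
These give weak type $(p_0,p_0)$, and Marcinkiewicz interpolation with the $\L^2$ bound gives boundedness on $\L^p_{\mu_p}$.

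The \textbf{main obstacle} is the nonlocal $D_t^{1/2}$ component: it carries no off-diagonal decay in time. To handle it we would try to avoid separate off-diagonal bounds for $D_t^{1/2}\E_\lambda$. Instead we would use the identity $D_t^{1/2}=H_t\,\partial_t^{1/2}$, where $H_t$ is the Hilbert transform in $t$, which is bounded on $\L^p_{\mu_p}$ since the weight does not depend on $t$. Then $\norm{D_t^{1/2}\H^{-1/2}f}_p$ is controlled by the spatial part together with the half-order time derivative, via the structure $\partial_t=\H+\omega^{-1}\mathrm{div}_x A\nabla_x$.

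\textbf{Step 3: real coefficients in \ref{Point (3)}.}
\begin{itemize}
    \item \emph{Sufficiency.} Nash--Moser theory for degenerate equations (Chiarenza--Serapioni) gives Gaussian bounds for the kernel of $\E_\lambda$ with respect to $\omega\,dx$. Testing these against $\L^q_{\mu_q}$ shows that boundedness is equivalent to an $RH_{q'/2}$ condition, which gives the $\Leftarrow$ direction for all $q$.
    \item \emph{Necessity.} The Gaussian lower bound applied to an indicator function of a ball forces $\omega^{q'/2}$ averages to be controlled by $\omega$ averages, which is exactly $\omega\in RH_{q'/2}$.
    \item \emph{The case $RH_\infty$.} In this case the $\omega$-weighted and Lebesgue measures are comparable on balls up to constants. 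The Blunck--Kunstmann weak-type $(1,1)$ conclusion then applies to $\omega^{1/2}\nabla_x\H^{-1/2}$ as a map $\L^1_{\mu_1}\to\L^{1,\infty}$.
\end{itemize}
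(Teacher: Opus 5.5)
There are genuine gaps, and the main one is in Point~(2), at exactly the component you flag. You open by claiming that Davies' method gives $\L^2_{\mu_2}$ off-diagonal estimates in the parabolic metric for the whole family $\lambda\D\E_\lambda$. This is false for $\lambda D_t^{1/2}\E_\lambda$ on temporal supports, where the decay is only polynomial. Your workaround does not repair this. Writing $D_t^{1/2}$ through $H_t$ and a half-order time derivative, and then using $\partial_t=\H+\omega^{-1}\mathrm{div}_x(A\nabla_x)$, gives no handle on $D_t^{1/2}\H^{-1/2}$: half powers do not split along a sum of non-commuting operators, and $\partial_t\H^{-1}=I+\omega^{-1}\mathrm{div}_x(A\nabla_x\H^{-1})$ contains second-order spatial derivatives that no Riesz transform bound controls.

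The paper instead combines two ingredients. It uses exponential decay on spatial supports (Proposition~\ref{prop: L2 spatial supports}). It uses only polynomial, limited-order decay $N^{-j\varepsilon}$ on temporal supports (Proposition~\ref{prop: time supports}), obtained from the commutator identity $[\H^\star,\eta]=-\partial_t\eta$ and the integral representation of $D_t^{1/2}$. That step needs a priori $\L^p$-bounds for $\omega^{1/2}\lambda\D\E_\lambda\omega^{-1/2}$, i.e.\ $q_-(\H)\le p_-(\H)$ (Proposition~\ref{prop: p_=q_}), which your plan never supplies. With such weak temporal decay one must use the two-scale Blunck--Kunstmann theorem (Theorem~\ref{thm: BK}) on annuli $C^N_j$ dilated by $2^j$ in space and $N^j$ in time. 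Each application only descends from $q$ to $(q_\star,q]$, so the paper iterates starting from $q=2$; your single descent from $2$ to an arbitrary $p_0>p_-(\H)$ is unjustified.

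Your framework is also inconsistent. Blunck--Kunstmann on the doubling space $(\R^{n+1},\mu_p)$ has no starting point, because the Kato estimate holds on $\L^2_{\mu_2}$, not on $\L^2_{\mu_p}$. That estimate is the result of Ataei--Egert--Nystr\"om, not a consequence of hidden coercivity. Likewise, Marcinkiewicz interpolation between weak type for $\mu_{p_0}$ and strong type for $\mu_2$ does not produce $\L^p_{\mu_p}$. The paper conjugates by $\omega^{\pm1/2}$ so that every estimate, including the $\L^{1,\infty}$ endpoint in Point~(3), is with respect to Lebesgue measure.

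Point~(1) contains a second genuine gap. What is needed is a global Gagliardo--Nirenberg embedding of $\mathrm{E}_\mu$ into $\L^{q'}_{\mu_{q'}}$ (density $\omega^{q'/2}$, not $\omega$), $\norm{u}_{\L^{q'}_{\mu_{q'}}}\lesssim\norm{\D u}_{2,\mu}^{\kappa}\norm{u}_{2,\mu}^{1-\kappa}$, under exactly $\omega\in RH_{q'/2}$; this is what produces the factor $\lambda^{\gamma_{2,q'}}$ for $\omega^{1/2}\E_\lambda\omega^{-1/2}$. Your two proposed tools do not deliver it:
\begin{itemize}
\item Fabes--Kenig--Serapioni gives higher integrability with respect to $\omega\,dx$, and in space only. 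The paper notes in Section~\ref{s8} that even a local parabolic version for $\D$ with respect to $\mu$ is unknown.
\item H\"older combined with the unweighted parabolic Sobolev embedding needs unweighted gradient bounds, which you can only reach through integrability of $\omega^{-1}$. You give no mechanism that would produce the sharp class $RH_{q'/2}$ or the endpoint $RH_{1+2/n}$; compare the remark after Proposition~\ref{lem: Sobolev}, where even the Muckenhoupt--Wheeden route needs $n\ge3$ and $RH_{n/(n-2)}$.
\end{itemize}
The missing tool is the weighted Hardy--Littlewood--Sobolev inequality $\norm{f}_{\L^{q'}_{\omega^{q'/2}}}\lesssim\norm{(-\Delta_\omega)^{\alpha/2}f}_{\L^2_\omega}$ with $\alpha=n(\frac12-\frac1{q'})$, valid under $RH_{q'/2}$, combined with a mixed time--space interpolation as in Proposition~\ref{lem: Sobolev}. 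With it, the general $A_2$ case needs no separate argument, since $\omega\in RH_{1+\varepsilon}$ covers $q$ near $2$. The strict inequality $p_-(\H)<q$ then comes from the open-endedness of reverse H\"older classes, which you omit.

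By contrast, your Gaussian-bound argument for the equivalence in Point~(3) can be made to work, and more directly than the paper's route via $\E^m_\lambda$ and triangle interpolation, if carried out after conjugation. A Schur-type bound on the spatial Gaussian kernels gives uniform $\L^q$-bounds under $RH_{q'/2}$. Conversely, testing the conjugated adjoint on $\one_{B\times I}$ with the lower bound gives $RH_{q'/2}$, since $A_2$ bounds the average of $\omega^{1/2}$ from below by the square root of the average of $\omega$.

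In the $RH_\infty$ case, however, the two measures are not comparable on balls (take $|x|^\beta$, $\beta>0$). What is actually used is $\omega\lesssim\omega(B)/|B|$ on $B$, which turns the conjugated Gaussian kernels into integrable convolution-type kernels. This yields the $\L^1$ off-diagonal estimates, also on a sector, and the $\L^1$--$\L^2$ off-diagonal estimates needed for the weak-type $(1,1)$ argument.
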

\noindent This is an extrapolation result from the parabolic Kato square root estimate (case $p=2$), proved by Ataei, Egert, and Nystr\"om \cite{AEN2025}. Before outlining the paper and discussing related work, we list examples of weights illustrating our results. See Section~\ref{sec: weights} for details.
\begin{enumerate}[label=$\blacklozenge$]
    \item A weight $\omega \in A_2(\R^n)$ is given by
    \begin{equation*}
    \omega(x) = |x|^\beta, \quad \forall x \in \R^n, \quad \text{with } -n < \beta < n.
    \end{equation*}

     \item A weight $\omega \in A_2(\R^n) \cap RH_{\frac{q'}{2}}(\R^n)$, with $q \in (1,2]$, is given by
    \begin{equation*}
    \omega(x) = |x|^\beta, \quad \forall x \in \R^n, \quad \text{with } -\frac{2n}{q'} < \beta < n.
    \end{equation*}

    \item A weight $\omega \in A_2(\R^n) \cap RH_{1+\frac{2}{n}}(\R^n)$ is given by
    \begin{equation*}
    \omega(x) = |x|^\beta, \quad \forall x \in \R^n, \quad \text{with } -\frac{n^2}{(n+2)} < \beta < n.
    \end{equation*}

    \item A weight $\omega \in A_2(\R^n) \cap RH_{\infty}(\R^n)$ is given by
    \begin{equation*}
    \omega(x) = |x|^\beta, \quad \forall x \in \R^n, \quad \text{with } 0 \le \beta < n.
    \end{equation*}

    \item A weight $\omega \in \left( A_2(\R^n) \cap RH_\star(\R^n)\right)\setminus RH_{\infty}(\R^n)$ is given by
    \begin{equation*}
    \omega(x) = \max\left\{ \ln\left(1/|x|\right),1 \right\}, \quad \forall x \in \R^n.
    \end{equation*}
\end{enumerate}

The backward operator $\H^\star := -\partial_t - \omega^{-1} \mathrm{div}_x\!\left(A^\star \nabla_x\right)$, where $A^\star$ is the Hermitian adjoint of the matrix $A$, is the adjoint of $\H$ and belongs to the same class of parabolic operators. Hence, Theorem~\ref{thm: théorème principal} also applies to $\H^\star$, and Point~\ref{Point (2)} with duality yields the following.

\begin{cor}\label{cor: reverse}
For every $p\in [2,p_-(\H^\star)')$, one has
\begin{equation}\label{eq: reverse}
\norm{\H^{\nicefrac{1}{2}}u}_{\L^p_{\mu_p}(\R^{n+1})} \lesssim \norm{\D u}_{\L^p_{\mu_p}(\R^{n+1})}, \quad \text{for all } u \in \Cont^\infty_0(\R^{n+1}).
\end{equation}
\end{cor}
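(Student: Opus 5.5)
We argue by duality, applying Point~\ref{Point (2)} of Theorem~\ref{thm: théorème principal} to the adjoint operator $\H^\star$. Fix $p\in[2,p_-(\H^\star)')$, so that $p'\in(p_-(\H^\star),2]$. Point~\ref{Point (2)} for $\H^\star$ then gives that $\mathcal{R}_{\H^\star}=(\nabla_x(\H^\star)^{-1/2},D_t^{1/2}(\H^\star)^{-1/2})$ is bounded on $\L^{p'}_{\mu_{p'}}(\R^{n+1})$. Here $\D=(\nabla_x,D_t^{1/2})$ denotes the parabolic gradient.

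The first step is to fix the right duality pairing. The weighted $\L^2_{\mu_2}$ inner product $\langle f,g\rangle_{\mu_2}=\int f\bar g\,\omega\d x\d t$ realizes $\L^p_{\mu_p}$ and $\L^{p'}_{\mu_{p'}}$ as mutual duals. Indeed,
\[
\int |f\bar g|\,\omega = \int |f|\omega^{1/2}\,|g|\omega^{1/2} \le \norm{f}_{\L^p_{\mu_p}}\norm{g}_{\L^{p'}_{\mu_{p'}}},
\]
since $\norm{f}_{\L^p_{\mu_p}}=\norm{\omega^{1/2}f}_{\L^p(\d x\d t)}$. The map $f\mapsto\omega^{1/2}f$ is an isometry from $\L^p_{\mu_p}$ onto unweighted $\L^p$, so the pairing is norming. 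Hence
\[
\norm{\H^{1/2}u}_{\L^p_{\mu_p}}=\sup\abs{\langle \H^{1/2}u,g\rangle_{\mu_2}},
\]
the supremum taken over $g$ in a dense class with $\norm{g}_{\L^{p'}_{\mu_{p'}}}\le1$. A convenient class is $g=(\H^\star)^{1/2}h$, or $g$ in the range of the resolvents.

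The second step is to rewrite the pairing formally as
\[
\langle \H^{1/2}u,g\rangle_{\mu_2}=\langle \H u,\H^{-1/2}\,\cdot\rangle.
\]
More precisely, let $v=(\H^\star)^{-1/2}g$. Then $\langle\H^{1/2}u,g\rangle_{\mu_2}=\langle \H u,v\rangle_{\mu_2}$, and we use the sesquilinear form of $\H$:
\[
\langle \H u,v\rangle_{\mu_2}=\int A\nabla_x u\cdot\overline{\nabla_x v}\d x\d t+\int H_tD_t^{1/2}u\,\overline{D_t^{1/2}v}\,\omega\d x\d t,
\]
where $H_t$ is the Hilbert transform in $t$. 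Here we used $\partial_t=D_t^{1/2}H_tD_t^{1/2}$, and the weight $\omega$ cancels against $\omega^{-1}$ in the elliptic part. We estimate the first term using $|A|\lesssim\omega$. Hölder in the form $\int|\nabla u||\nabla v|\omega\le\norm{\nabla u}_{\L^p_{\mu_p}}\norm{\nabla v}_{\L^{p'}_{\mu_{p'}}}$ bounds it by $\norm{\D u}_{\L^p_{\mu_p}}\norm{\nabla_x(\H^\star)^{-1/2}g}_{\L^{p'}_{\mu_{p'}}}$.

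For the second term, $H_t$ acts only in $t$ and $\omega$ depends only on $x$. So $H_t$ is bounded on $\L^p_{\mu_p}$ by Fubini and the one-dimensional boundedness of the Hilbert transform. The second term is therefore bounded by $\norm{D_t^{1/2}u}_{\L^p_{\mu_p}}\norm{D_t^{1/2}(\H^\star)^{-1/2}g}_{\L^{p'}_{\mu_{p'}}}$. Combining the two terms with the Riesz transform bound for $\H^\star$ yields
\[
\norm{\H^{1/2}u}_{\L^p_{\mu_p}}\lesssim\norm{\D u}_{\L^p_{\mu_p}}.
\]

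The main obstacle is justification rather than the estimate itself. We must check that $\H^{1/2}u$ for $u\in\Cont^\infty_0$ belongs to $\L^p_{\mu_p}$, or at least that the supremum formula is legitimate, and that the identity $\langle \H^{1/2}u,g\rangle=\langle \D u, \D(\H^\star)^{-1/2}g\rangle$ holds with the appropriate form. We would handle this in $\L^2_{\mu_2}$ using the Kato estimate of \cite{AEN2025}. There, $\H^{1/2}$ extends from the energy space, $(\H^{1/2})^*=(\H^\star)^{1/2}$, and the form identity holds for $u$ and $v$ in the energy domain. Concretely, we would first prove the bound for $g\in\L^2_{\mu_2}\cap\L^{p'}_{\mu_{p'}}$, which is dense. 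Then $\H^{1/2}u\in\L^2_{\mu_2}$, and the pairing $\langle\H^{1/2}u,g\rangle_{\mu_2}=\langle u,(\H^\star)^{1/2}g\rangle$ can be rewritten as the form evaluated at $u$ and $(\H^\star)^{-1/2}g$ through approximation via $\E_\lambda$. This uses that the extension of $\mathcal{R}_{\H^\star}$ to $\L^{p'}_{\mu_{p'}}$ agrees with the $\L^2$ one on the intersection. Taking the supremum over such $g$ then gives \eqref{eq: reverse}, with $\H^{1/2}u$ understood via this duality.
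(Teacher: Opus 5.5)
Your proposal is correct and is essentially the paper's argument. You pair $\H^{1/2}u$ with test functions via $\langle\cdot,\cdot\rangle_{2,\mu}$, rewrite the pairing as the form of $\H$ evaluated at $u$ and $(\H^\star)^{-1/2}\phi$, and conclude with H\"older, the boundedness of $H_t$ on $\L^p_{\mu_p}(\R^{n+1})$, and Point~(2) of the main theorem for $\H^\star$ at $p'\in(p_-(\H^\star),2]$.

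The only difference is the final justification. The paper tests only against $\phi\in\operatorname{ran}(\sqrt{\H^\star})\cap\L^{p'}_{\mu_{p'}}(\R^{n+1})$ and invokes density of this class in $\L^2_\mu(\R^{n+1})\cap\L^{p'}_{\mu_{p'}}(\R^{n+1})$ (as in \cite[Lemma~7.2]{AEbook2023}). You instead extend the identity by $\L^2_\mu$-continuity to all $g\in\L^2_\mu(\R^{n+1})$, with $\mathcal{R}_{\H^\star}g$ replacing the in general undefined $\D(\H^\star)^{-1/2}g$, and only then apply the $\L^{p'}_{\mu_{p'}}$ bound on the intersection. This is equally valid and bypasses that density lemma.
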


The proof of Point~\ref{Point (1)} of Theorem~\ref{thm: théorème principal} is given in Section~\ref{s4}, while the case of real-valued coefficients in Point~\ref{Point (3)} is addressed in Section~\ref{s6}. For complex coefficients, our approach relies on weighted parabolic Gagliardo--Nirenberg inequalities, whereas for real coefficients Gaussian bounds yield more precise estimates and wider admissible ranges. The proof of Point~\ref{Point (2)} follows the one developed in the unweighted theory~\cite{BEK26}, once we have the off-diagonal estimates established in Section~\ref{s3}. This is why we only give the main steps in Section~\ref{s5}. In Section~\ref{s7}, we prove Corollary~\ref{cor: reverse}. Finally, in Section~\ref{s8}, we present and comment on several open questions.

Our contribution here is to bring in some weighted embeddings~\cite{auscherbaadi2025hardy} allowing to quantify ranges of exponents by the reverse H\"older properties of $\omega$. Our approach is to view the weight as a multiplication operator. More precisely, if $T$ is an operator on $\L^2_{\mu_2}(\R^{n+1})$, then the conjugated operator $\omega^{\frac{1}{2}} T \omega^{-\frac{1}{2}}$ acts on the unweighted space $\L^2(\R^{n+1})$. This explains the local finiteness assumption on $\mu_{p'}$ in the definition of $p_-(\mathcal{H})$, which is needed for density and duality arguments, as in the proof of off-diagonal estimates.

Let us briefly recall some related literature for context and motivation. In the elliptic case, \textit{i.e.} $\mathcal{L}=-\omega^{-1} \mathrm{div}_x(A(x)\nabla_x)$, the boundedness of the elliptic Riesz transform $\nabla_x \mathcal{L}^{-1/2}$ is well understood, first in the unweighted case $\omega=1$, see \cite{auscher2007necessary}, and then in the degenerate case, with a weighted theory available in each setting. See \cite{cruz2017kato} and the series of papers by P.~Auscher and J.~M.~Martell \cite{AM1,AM3,AM2,AM4}. In the parabolic case, only the parabolic Kato square root estimate is known, first in the unweighted case \cite{AEN2020} and then in the degenerate case \cite{AEN2025}. The case $p \neq 2$ in the unweighted setting appears in~\cite{Ouhabaz21} with autonomous real coefficients $A(t,x)=A(x)$, using interpolation arguments based on maximal regularity for the Cauchy problem associated with $\H$. For the non-autonomous case, in a recent joint work with M.~Egert and B.~W.~Kosmala~\cite{BEK26}, we treat the case $p \le 2$, and, to the best of our knowledge, this is the first result covering this generality. The present paper presents an extension to degenerate operators. We stress that this is not just a routine extension to a weighted setting, as the appearance of $\mu_p$ varying with $p$ suggests.

\subsubsection*{\textbf{Notation:}} Throughout this paper, we adopt the following notation.
\begin{enumerate}[label=$\blacklozenge$, leftmargin=*, itemsep=0pt]
\item We fix an integer $n \ge 1$ and work in $\R^{n+1} = \R^n \times \R$. 
\item For any $p \in [1, \infty]$, $p'$ denotes its H\"older conjugate.
\item With $d=n+2$, we define $2_\star := \frac{2d}{d+2} \in (1,2)$ and $2^\star := \frac{2d}{d-2} > 2$, the lower and upper parabolic Sobolev conjugates of $2$, respectively. Note that $(2_\star)' = 2^\star$ and that $\nicefrac{2^\star}{2}=1+\nicefrac{2}{n}$.
\item For all $1\le p \le q \le \infty$, we set $\gamma_{p,q}:= (n+2) \left(  \nicefrac{1}{q} - \nicefrac{1}{p} \right)  .$
\item For $(x,t),(y,s)\in \R^{n+1}$, we denote by $\d((x,t),(y,s))=\max\big(|x-y|_\infty,\sqrt{|t-s|}\big)$ the parabolic distance between them.
\item For $(x,t)\in\R^{n+1}$ and $r>0$, let $Q_r(x)$ denote the cube centered at $x$ with radius $r$ and sides parallel to the coordinate axes, and set $I_r(t)\coloneqq (t-r^2,t+r^2)$. The parabolic cube centered at $(x,t)$ with radius $r$ is $\Delta_r(x,t)\coloneqq Q_r(x)\times I_r(t)$. We usually omit the center.
\item For a parabolic cube $\Delta_r:=Q_r\times I_r \subset \R^{n+1}$ and $a,b>0$, we define the stretched cube $aQ_r\times bI_r:=Q_{ar}\times I_{br}$. For $N>1$, we set $C^N_1(\Delta_r):=4Q_r \times N^2 I_r$ and for all $j\ge 2$
\begin{equation*}
   C^N_j(\Delta_r):= \left ( 2^{j+1}Q_r \times N^{j+1} I_r \right ) \setminus \left ( 2^{j}Q_r \times N^{j} I_r \right ).
\end{equation*}
\item In the above three points, the $|\cdot|_\infty$ norm can be replaced by the Euclidean norm on $\R^n$, and cubes by Euclidean balls. Since the parabolic distances are equivalent and the weights are doubling, this does not affect our arguments.
\item For $ p \in [1, \infty]$, $\L^p(\R^{n+1})$ is always with respect to the Lebesgue measure; otherwise, the underlying measure will be specified as a subscript. We write $\|\cdot\|_p$ for its norm.
\item The notation $C = C(a,b,\dots)$ for a constant means that $ C \in (0, \infty)$ and depends only on $(a,b,\dots)$. The notation $\lesssim_p$ means that the quantity is bounded by a constant depending on the assumed $\L^p$-boundedness, eventually after interpolation with $\L^2$ bounds.
\item We abbreviate the term ``locally finite'' as ``l.f.''.
\end{enumerate}

\subsubsection*{\textbf{Acknowledgements:}}

The author thanks his PhD advisor, Professor Pascal Auscher, for stimulating discussions and useful suggestions improving earlier versions of this manuscript.

\section{Preliminaries and basic assumptions}

We begin by specifying the main assumptions on the weights, the underlying functional spaces, and the tools that will be used repeatedly in the sequel.

\subsection{Weight classes}\label{sec: weights}

We refer to \cite[Ch.~V]{Stein1993_HA} and \cite[Ch.~7]{grafakos2008classical} for classical facts on weights. All cubes $Q \subset \R^n$ appearing in the forthcoming definitions are assumed to have sides parallel to the coordinate axes, and we shall not mention this assumption henceforth. We say that an almost everywhere (for Lebesgue measure) defined function $\omega : \R^n \rightarrow [0,\infty]$ is a weight if it is locally integrable on $\R^n$. The Muckenhoupt class $A_2(\R^n)$ is defined as the set of all weights satisfying
\begin{equation}\label{MuckWeight}
      [ \omega  ]_{A_2}:= \sup_{Q \subset \R^n  } \left ( \frac{1}{|Q|} \int_Q \omega(x) \, \mathrm{d}x   \right ) \left ( \frac{1}{|Q|} \int_Q \omega^{-1}(x) \, \mathrm{d}x   \right ) < \infty,
\end{equation}
where the supremum is over all cubes $Q \subset \R^n$, and $|Q|$ is their Lebesgue measure. The reverse H\"older class $RH_q(\R^n)$, $q\in [1,\infty)$, consists of all weights $\omega$ such that, for some constant $C$,
\begin{equation*}
    \left ( \frac{1}{|Q|} \int_Q \omega^q(x) \, \mathrm{d}x   \right )^{\frac{1}{q}} \leq  \frac{C}{|Q|} \int_Q \omega(x) \, \mathrm{d}x,
\end{equation*}
for all cubes $Q \subset \R^n$. For $q=\infty$, the reverse H\"older class $RH_\infty(\R^n)$ consists of all weights $\omega$ such that there exists a constant $C$ such that, for all cubes $Q \subset \R^n$,
\begin{equation*}
    \omega(y) \leq \frac{C}{|Q|} \int_Q \omega(x)\, \mathrm{d}x, \quad \text{for almost every } y \in Q.
\end{equation*}
The infimum of such constants $C$ is denoted by $[\omega]_{RH_q}$. Finally, we set
\begin{equation*}
   RH_\star(\R^n)=\bigcap_{s \in [1,\infty)} RH_{s}(\R^n)= \bigcap_{q \in (1,2]} RH_{\frac{q'}{2}}(\R^n).
\end{equation*}
In the above definitions, cubes may be replaced by Euclidean balls. For simplicity, we keep the same notation and constants. We recall below some known facts about these classes.
\begin{prop}\label{prop:weights}
\begin{enumerate}[label=(\arabic*), leftmargin=*, itemsep=0pt]
    \item $RH_q(\R^n) \subset RH_p(\R^n)$ for all $1 \le p \leq q \le \infty$.
    \item If $\omega \in RH_q(\R^n)$ with $1<q<\infty$, then there exists $p \in (q,\infty)$ such that $\omega \in RH_p(\R^n)$.
    \item If $\omega \in A_2(\R^n)$, then there exists $\varepsilon=\varepsilon([\omega]_{A_2},n)$ such that $\omega \in RH_{1+\varepsilon}(\R^n)$.
\end{enumerate}
\end{prop}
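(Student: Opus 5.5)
The three facts are classical, and I would derive them from the standard toolbox of weight theory: Hölder's inequality, Gehring's lemma, and the fact that $A_\infty$ weights satisfy a reverse Hölder inequality. Constants are allowed to depend on the weight constants and on $n$.

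\textbf{Point (1).} This follows from Hölder's inequality on normalized averages. For $1\le p\le q<\infty$ and any cube $Q$,
\[
\Big(\tfrac{1}{|Q|}\int_Q \omega^p\Big)^{1/p}\le \Big(\tfrac{1}{|Q|}\int_Q \omega^q\Big)^{1/q}\le \tfrac{[\omega]_{RH_q}}{|Q|}\int_Q\omega ,
\]
so $[\omega]_{RH_p}\le[\omega]_{RH_q}$. When $q=\infty$, the pointwise bound $\omega\le [\omega]_{RH_\infty}\,\tfrac{1}{|Q|}\int_Q\omega$ a.e.\ on $Q$ gives the same conclusion directly, after raising to the power $p$ and averaging over $Q$.

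\textbf{Point (2).} This is the self-improvement property, i.e.\ Gehring's lemma. Suppose $\omega\in RH_q$ with $1<q<\infty$, and set $u:=\omega^q\in \L^1_{\mathrm{loc}}$. The reverse Hölder inequality reads
\[
\Big(\tfrac{1}{|Q|}\int_Q u\Big)\le C\Big(\tfrac{1}{|Q|}\int_Q u^{1/q}\Big)^{q}\quad\text{for all cubes } Q .
\]
Gehring's lemma then provides $\delta>0$ with $u\in RH_{1+\delta}$. I would prove it as follows.
\begin{enumerate}[label=(\roman*)]
\item Localize to a fixed cube $Q_0$.
\item Run a Calder\'on--Zygmund decomposition of $u^{1/q}$ at heights $\lambda>\lambda_0:=\tfrac{1}{|Q_0|}\int_{Q_0}u^{1/q}$.
\item Use the reverse Hölder inequality on each selected cube to obtain the good-$\lambda$ type estimate
\[
\int_{\{u^{1/q}>\lambda\}}u\lesssim \lambda^{q-1}\int_{\{u^{1/q}>\beta\lambda\}}u^{1/q}.
\]
\item Multiply by $\lambda^{\delta q-1}$, integrate in $\lambda$ over $(\lambda_0,\infty)$, and absorb the resulting term for small $\delta$. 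The absorption is justified by first working with truncations $\min(u,N)$ and then letting $N\to\infty$.
\end{enumerate}
The outcome is $\omega^q\in RH_{1+\delta}$, that is, $\omega\in RH_{q(1+\delta)}$, so $p:=q(1+\delta)$ works. Step (iii) is the main technical point: it needs the doubling of the relevant averages and the comparison between a selected Calder\'on--Zygmund cube and its parent.

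\textbf{Point (3).} This is the reverse Hölder property of $A_\infty$ weights. Since $\omega\in A_2$, the weight is doubling, and it satisfies the $A_\infty$ condition: for measurable $E\subset Q$ with $|E|\ge\tfrac12|Q|$ one has $\omega(E)\ge c\,\omega(Q)$, with $c=c([\omega]_{A_2})$. This follows from Cauchy--Schwarz:
\[
\Big(\tfrac{|E|}{|Q|}\Big)^2\le [\omega]_{A_2}\,\tfrac{\omega(E)}{\omega(Q)} .
\]
Next, take a cube $Q$ and run a Calder\'on--Zygmund decomposition of $\omega$ relative to $Q$ at levels $\alpha^k\,\tfrac{1}{|Q|}\int_Q\omega$, with $\alpha=\alpha(n)$ large. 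The $A_\infty$ condition shows that at each level the $\omega$-measure of the level set decays geometrically, so
\[
\omega\big(\{\omega>\alpha^k\omega_Q\}\cap Q\big)\le (1-c)^k\,\omega(Q) .
\]
Summing over $k$ gives
\[
\tfrac{1}{|Q|}\int_Q\omega^{1+\varepsilon}\le C\,\Big(\tfrac{1}{|Q|}\int_Q\omega\Big)^{1+\varepsilon}
\]
for $\varepsilon$ small with $\alpha^{\varepsilon}(1-c)<1$. Both $\varepsilon$ and $C$ then depend only on $[\omega]_{A_2}$ and $n$.

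Alternatively, one could cite \cite[Ch.~V]{Stein1993_HA} or \cite[Ch.~7]{grafakos2008classical} for all three points. In a paper I would simply give these references, since the statement is recalled as known.
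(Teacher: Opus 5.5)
Your proposal is correct and follows the standard route. The paper gives no proof of this proposition; it only recalls it from \cite[Ch.~V]{Stein1993_HA} and \cite[Ch.~7]{grafakos2008classical}. There, (1) is H\"older's inequality, (2) is Gehring's lemma, and (3) is your Calder\'on--Zygmund level-set argument driven by $\omega(E)\ge (4[\omega]_{A_2})^{-1}\omega(Q)$ for $E\subset Q$ with $|E|\ge\frac12|Q|$, taking for instance $\alpha=2^{n+1}$ (doubling is not actually needed in (3)).

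The one step to state more carefully is the truncation in Gehring's lemma. Integrate the good-$\lambda$ inequality over $\lambda\in(\lambda_0,N)$, so that the left-hand side involves the finite quantity $\int_{Q_0}u\,\min(u^{1/q},N)^{\delta q}$, which can then be absorbed. Do not replace $u$ by $\min(u,N)$ in the hypothesis, since it is not immediate that the truncated function still satisfies the reverse H\"older inequality.
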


For the first three examples of weights given in the introduction, see \cite[Remark~2.3]{auscherbaadi2025hardy}, which explains how to obtain them from the literature. For the fourth one, by \cite[Theorem~4.4]{MR1308005}, $\omega \in A_2(\R^n) \cap RH_{\infty}(\R^n)$ if and only if $\omega^{-1}\in A_1(\R^n)$; see \cite[Example~7.1.7]{grafakos2008classical} for examples of the latter class. Finally, the example $\omega \in \bigl( A_2(\R^n) \cap RH_\star(\R^n)\bigr)\setminus RH_{\infty}(\R^n)$ is taken from \cite[Sect.~4]{MR1308005}, and one easily verifies that it satisfies the claim.

From now on and \textbf{throughout this paper, $\omega$ denotes a fixed weight belonging to the Muckenhoupt class $A_2(\R^n)$}. For every $p\in  [1,\infty)$ and $K \subset \R^n$ a measurable set, we denote by $\L^p_\omega(K)$ the space of all measurable functions $f:K \rightarrow \mathbb{C}$ such that $$\|f\|_{\L^p_\omega(K)}:=\left ( \int_K |f|^p \, \mathrm d \omega  \right )^{1/p}<\infty.$$
In particular, $\L^2_\omega(\R^n)$ is the Hilbert space of square-integrable functions on $\R^n$ with respect to $\mathrm{d}\omega$. We denote its norm by ${\lVert \cdot \rVert}_{2,\omega}$ and its inner product by $\langle \cdot , \cdot \rangle_{2,\omega}$.

For all $p \in  [1,\infty)$, we define $\mu_p$ on $\R^{n+1}$ by \eqref{eq: mu_p}. For $p=2$, we simply write $\d \mu$ instead of $\d \mu_2$. The same notation for $\omega$ applies when it is replaced by any weight on $\R^n$ or $\R^{n+1}$.

\subsection{Parabolic energy space} 
We denote by $\F$ the Fourier transform with respect to the time variable. Recall that if $u \in \L_\mu^2(\R^{n+1})$, then $u(x,\cdot) \in \L^2(\R)$ for almost every $x \in \R^n$ by Fubini's theorem. Equivalently, again by Fubini's theorem, one may identify $\L_\mu^2(\R^{n+1})$ with $\L^2(\R; \L_\omega^2(\R^n))$ and apply the $\L_\omega^2(\R^n)$-valued Fourier transform, which defines an isomorphism; see \cite{hytonen2016analysis} for details. Then, the expression
\begin{equation*}
    H_t u := \F^{-1}\left(i\,\tfrac{\tau}{|\tau|}\,\F u\right)
\end{equation*}
defines the Hilbert transform. If $|\tau|^{1/2}\F u \in \L_\mu^2(\R^{n+1})$, the half-order time derivative is defined by
\begin{equation*}
    D_t^{1/2} u := \F^{-1}\left(|\tau|^{1/2}\F u\right).
\end{equation*}
We now define the parabolic energy space as 
\begin{equation*}
    \mathrm{E}_\mu:= \left\{u\in \L_\mu^2(\R^{n+1}) \, : \, \nabla_xu, \, D_t^{1/2}u \in \L_\mu^2(\R^{n+1})  \right\},
\end{equation*}
where $\nabla_x$ denotes the gradient with respect to the spatial variables $x$, understood in the sense of distributions on $\R^{n+1}$, \textit{i.e.} as an element of $\mathcal{D}'(\R^{n+1})^n$, since $\L^2_{\mu}(\R^{n+1}) \subset \L^1_{\text{loc}}(\R^{n+1})$ by \eqref{MuckWeight}. For $u \in \mathrm{E}_\mu$, its parabolic gradient is defined by $\D u := (\nabla_x u, D_t^{1/2} u)$. We equip $\mathrm{E}_\mu$ with the norm $\|u\|_{\mathrm{E}_\mu} := (\|u\|_{2,\omega}^2 + \|\nabla_x u\|_{2,\omega}^2 + \|D_t^{1/2} u\|_{2,\omega}^2)^{1/2}$, which makes $\mathrm{E}_\mu$ a Hilbert space containing $\Cont_0^\infty(\R^{n+1})$ as a dense subspace. Moreover, multiplication by functions in $C_b^1(\R^{n+1})$ defines a bounded operator on $\mathrm{E}_\mu$. For details, see \cite[Lemma 3.3]{AEN2025}. In particular, the triple $(\mathrm{E}_\mu, \L_\mu^2(\R^{n+1}), \mathrm{E}_\mu^\star)$ forms a Gelfand triple, where $\mathrm{E}_\mu^\star$ is the anti-dual of $\mathrm{E}_\mu$. Moreover, one defines the bounded operators
\begin{align*}
    \nabla_x  : \, \mathrm{E}_\mu \longrightarrow \L_\mu^2(\R^{n+1})^n, \quad D_t^{1/2}  : \, \mathrm{E}_\mu \longrightarrow \L_\mu^2(\R^{n+1}), \quad
    \D   : \, \mathrm{E}_\mu \longrightarrow \L_\mu^2(\R^{n+1})^{n+1},
\end{align*}
and their adjoints
\begin{align*}
    -\omega^{-1}\mathrm{div}_x \, \omega : \, \L_\mu^2(\R^{n+1})^n \longrightarrow \mathrm{E}_\mu^\star, \quad
    D_t^{1/2} : \, \L_\mu^2(\R^{n+1}) \longrightarrow \mathrm{E}_\mu^\star.
\end{align*}
We finally recall the following representation formula for $D_t^{1/2}$: for $u \in \Cont_0^\infty(\R^{n+1})$, one has
\begin{equation}\label{eq: Dt representation}
    D_t^{1/2} u(x,t) = \frac{1}{2\sqrt{2\pi}} \int_{\R} \frac{u(x,t) - u(x,s)}{|t-s|^{3/2}} \, \mathrm{d}s, \quad \text{for almost every } (x,t) \in \R^{n+1}.
\end{equation}
See \cite[§12.1]{Kilbas} for details. Thus, by a density argument, for $u \in \mathrm{E}_\mu$, for almost every $x \in \R^n$ and almost every $t\in \R \setminus \mathrm{supp}(u(x,\cdot))$, one has
\begin{equation*}
    D_t^{1/2} u(x,t) = -\frac{1}{2\sqrt{2\pi}} \int_{\R} \frac{u(x,s)}{|t-s|^{3/2}} \, \mathrm{d}s.
\end{equation*}

\subsection{Toolbox for \texorpdfstring{$\L^p-\L^q$}{} bounded families}

We recall several abstract principles that will be used in the sequel, concerning (unweighted) $\L^p-\L^q$ boundedness for uniformly bounded families $(T_\lambda)_{\lambda \in \mathcal{U}}$ of operators defined on $\L^2(\R^{n+1})$, indexed by a set $\mathcal{U} \subset \mathbb{C} \setminus \{0\}$. In our setting, these families will consist of powers of the resolvent family or of their parabolic gradients associated with the degenerate parabolic operator, conjugated with $\omega^{\frac{1}{2}}$ and $\omega^{-\frac{1}{2}}$.

\begin{defn}\label{def: toolbox}
Let $1\le p \le q \le \infty$. The family $(T_\lambda)_{\lambda \in \mathcal{U}}$ is said to 
\begin{enumerate}
    \item be  $\L^p-\L^q$ bounded if there exists a constant $C>0$ such that 
    \begin{equation*}
        \| T_\lambda u\|_q \le C |\lambda|^{\gamma_{p,q}} \|  u\|_p,
    \end{equation*}
    for all $\lambda\in \mathcal{U}$ and all $u \in \L^p(\R^{n+1}) \cap \L^2(\R^{n+1})$.
    \item satisfy the  $\L^p-\L^q$ off-diagonal estimates if there exist constants $C,c$ such that
    \begin{equation*}
        \| \one_F T_\lambda (\one_E u)\|_q \le C |\lambda|^{\gamma_{p,q}} e^{-c \frac{\d(E,F)}{|\lambda|}} \| \one_E u\|_p,
    \end{equation*}
    for all $\lambda\in \mathcal{U}$, $u \in \L^p(\R^{n+1}) \cap \L^2(\R^{n+1})$ and all measurable sets $E,F\subset \R^{n+1}$, where $\d(E,F)$ is the parabolic distance between $E$ and $F$.
\end{enumerate}
When $p=q$, we speak of  $\L^p$-boundedness and  $\L^p$ off-diagonal estimates, respectively.
\end{defn}

We recall the following useful results. The proofs can be obtained by adapting those in \cite[Chapter~4]{AEbook2023} to the parabolic scaling.

\begin{lem}\label{lem: Toolbox}
Let $(T_\lambda)_{\lambda \in \mathcal{U}}$ and $(S_\lambda)_{\lambda \in \mathcal{U}}$ be operator families as above. Let $1\le p \le q \le r \le \infty$.
\begin{enumerate}
    \item \emph{(Duality)} $(T_\lambda)_{\lambda \in \mathcal{U}}$ is  $\L^p-\L^q$ bounded if and only if $(T^\star_\lambda)_{\lambda \in \mathcal{U}}$ is  $\L^{q'}-\L^{p'}$ bounded.
    \item \emph{(Composition)} If $(T_\lambda)_{\lambda \in \mathcal{U}}$ is  $\L^p-\L^q$ bounded and $(S_\lambda)_{\lambda \in \mathcal{U}}$ is  $\L^q-\L^r$ bounded, then $(S_\lambda T_\lambda)_{\lambda \in \mathcal{U}}$ is  $\L^p-\L^r$ bounded. 
    \item \emph{(Extrapolation)} If $(T_\lambda)_{\lambda \in \mathcal{U}}$ satisfies  $\L^p-\L^q$ off-diagonal estimates, then it is  $\L^p$-bounded and  $\L^q$-bounded.
\end{enumerate}
Points (1) and (2) remain valid if boundedness is replaced by off-diagonal estimates.
\end{lem}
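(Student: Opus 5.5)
The plan is to prove the three points of Lemma~\ref{lem: Toolbox} directly, following the scheme of \cite[Chapter~4]{AEbook2023} with parabolic cubes and the exponent $\gamma_{p,q}=(n+2)(\nicefrac1q-\nicefrac1p)$ in place of the elliptic ones.

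\textbf{Duality.} For $u\in\L^p\cap\L^2$ and $v\in\L^{q'}\cap\L^2$ we have $|\langle T^\star_\lambda v,u\rangle|=|\langle v,T_\lambda u\rangle|\le C|\lambda|^{\gamma_{p,q}}\|u\|_p\|v\|_{q'}$. Taking the supremum over a dense class of $u$ in the unit ball of $\L^p$ gives $\|T_\lambda^\star v\|_{p'}\le C|\lambda|^{\gamma_{p,q}}\|v\|_{q'}$. The exponents match because $\gamma_{q',p'}=(n+2)(\nicefrac1{p'}-\nicefrac1{q'})=\gamma_{p,q}$. For the off-diagonal version, apply the same argument to $\one_F T_\lambda\one_E$, whose adjoint is $\one_E T^\star_\lambda\one_F$, and use that the distance $\d(E,F)$ is symmetric.

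\textbf{Composition.} For boundedness, simply multiply the two bounds and use $\gamma_{p,q}+\gamma_{q,r}=\gamma_{p,r}$. A minor issue is that $T_\lambda u$ must lie in $\L^2$ for $S_\lambda$ to be applied; this holds because the families act on $\L^2$.

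For off-diagonal estimates, given $E$ and $F$ we cover $\R^{n+1}$ by parabolic annuli $G_k$ at distance $\sim k|\lambda|$ from $E$, and write
\[
\one_F S_\lambda T_\lambda \one_E=\sum_k \one_F S_\lambda \one_{G_k}T_\lambda\one_E .
\]
Each term is controlled by either $e^{-c\,\d(E,G_k)/|\lambda|}$ or $e^{-c\,\d(G_k,F)/|\lambda|}$. Since $\d(E,G_k)+\d(G_k,F)\gtrsim \d(E,F)$ up to a width $|\lambda|$, summing in $k$ gives $e^{-c'\d(E,F)/|\lambda|}$ with a smaller constant $c'$. Alternatively, split at the midpoint: the set of points closer to $E$ versus those closer to $F$, each at distance $\ge \d(E,F)/2$ from the other set. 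Two terms then suffice, and no summation is needed.

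\textbf{Extrapolation.} This is the main step. We tile $\R^{n+1}$ by parabolic cubes $\Delta_j$ of radius $|\lambda|$, with $|\Delta_j|\sim|\lambda|^{n+2}$. For the $\L^p$ bound, write $u=\sum_k\one_{\Delta_k}u$. By H\"older on $\Delta_j$,
\[
\|\one_{\Delta_j}T_\lambda\one_{\Delta_k}u\|_p\le |\lambda|^{(n+2)(\nicefrac1p-\nicefrac1q)}\|\one_{\Delta_j}T_\lambda\one_{\Delta_k}u\|_q\lesssim e^{-c\,\d(\Delta_j,\Delta_k)/|\lambda|}\|\one_{\Delta_k}u\|_p,
\]
and the powers of $|\lambda|$ cancel exactly. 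The matrix $\big(e^{-c\,\d(\Delta_j,\Delta_k)/|\lambda|}\big)_{j,k}$ has uniformly bounded row and column sums, because the number of cubes at distance $\sim m|\lambda|$ grows only polynomially in $m$. Schur's test, or Young's inequality on $\mathbb{Z}^{n+1}$, then gives the $\L^p$ bound. The $\L^q$ bound is symmetric: apply H\"older on the input side, $\|\one_{\Delta_k}u\|_p\le|\lambda|^{(n+2)(\nicefrac1p-\nicefrac1q)}\|\one_{\Delta_k}u\|_q$, and run the same summation.

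The only delicate points are bookkeeping: the exponent $q=\infty$ case, which uses suprema in place of sums, and keeping track of how the constant $c$ degrades.
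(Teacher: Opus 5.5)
Your proposal is correct and matches what the paper intends. The paper gives no proof beyond pointing to \cite[Chapter~4]{AEbook2023} adapted to parabolic scaling, and your pairing argument for duality, midpoint splitting for composition, and tiling by parabolic cubes of radius $|\lambda|$ with H\"older and a Schur-type summation for extrapolation are exactly that adaptation (one cosmetic point: for $p=\infty$ the class $\L^\infty\cap\L^2$ is not dense in $\L^\infty$, only norming for $\L^1$, which is all your supremum argument uses).
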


We also recall the following useful bootstrapping argument, which follows \emph{verbatim} from the proof of \cite[Lemma~4.4]{AEbook2023}, where the case $q=2$ is treated.
\begin{lem}[Triangle interpolation]\label{lem: gives m}
Let $q\in (1,\infty]$ and $(T_\lambda)_{\lambda \in \mathcal{U}}$ be an  $\L^q$-bounded family. Assume that there exist $p,\varrho \in [1,q)$ such that $(T_\lambda)_{\lambda \in \mathcal{U}}$ is  $\L^p$-bounded and  $\L^\varrho-\L^q$ bounded. Then, for all $r\in (p,q]$, there exists an integer $m \ge 1$ such that $(T^m_\lambda)_{\lambda \in \mathcal{U}}$ is  $\L^r-\L^q$ bounded. 
\end{lem}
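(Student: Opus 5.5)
The statement to prove is Lemma~\ref{lem: gives m}. The plan is to follow the bootstrapping scheme of \cite[Lemma~4.4]{AEbook2023}: interpolate between the endpoint bounds, then compose to gain integrability step by step. Throughout, $|\lambda|^{\gamma}$-factors will multiply correctly under composition, because $\gamma_{a,b}+\gamma_{b,c}=\gamma_{a,c}$.

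\textbf{Step 1: interpolation.} By Riesz--Thorin, interpolating the $\L^\varrho-\L^q$ bound with the $\L^q$ bound shows that $(T_\lambda)$ is $\L^s-\L^q$ bounded for every $s\in[\varrho,q]$. The weight $|\lambda|^{\gamma}$ interpolates linearly in the reciprocal exponents, so it comes out with the right exponent. Interpolating the $\L^p$ bound with the $\L^\varrho-\L^q$ bound shows that $(T_\lambda)$ is $\L^{s_\theta}-\L^{t_\theta}$ bounded, where
\[
\frac{1}{s_\theta}=\frac{1-\theta}{p}+\frac{\theta}{\varrho},\qquad \frac{1}{t_\theta}=\frac{1-\theta}{p}+\frac{\theta}{q},\qquad \theta\in[0,1].
\]
This gives boundedness on a whole segment (the hypotenuse of a triangle in the $(1/s,1/t)$-plane), with vertices $(1/p,1/p)$, $(1/\varrho,1/q)$ and $(1/q,1/q)$. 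Interpolating further, all points of the closed triangle are $\L^s-\L^t$ bounds.

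\textbf{Step 2: iterate.} Fix $r\in(p,q]$. If $1/r\le 1/\varrho$, Step~1 already gives an $\L^r-\L^q$ bound, so $m=1$. Otherwise, start at $s_0=r$. Take the point on the hypotenuse with $1/s_\theta = 1/r$; since $p<r$, this has $\theta>0$. It gives an $\L^{r}-\L^{r_1}$ bound with
\[
\frac{1}{r}-\frac{1}{r_1}=\theta\Bigl(\frac{1}{\varrho}-\frac{1}{q}\Bigr)\cdot\frac{1/p-1/q}{1/p-1/\varrho}.
\]
The decrement in $1/r$ is at least a fixed $\delta>0$ depending only on $r,p,\varrho,q$. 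Indeed, the hypotenuse lies strictly below the diagonal, except at $(1/p,1/p)$, and $1/r$ stays bounded away from $1/p$. Repeating with $r_1$ in place of $r$, the reciprocal exponents can only move further from $1/p$, so the gain $\delta$ persists. After finitely many steps, $k$ say, $1/r_k\le 1/\varrho$. One more application of $T_\lambda$ then maps $\L^{r_k}$ into $\L^q$. By Lemma~\ref{lem: Toolbox}(2), $T_\lambda^{k+1}$ is $\L^r-\L^q$ bounded, and the claim follows with $m=k+1$.

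The only delicate point is the uniform lower bound on $\delta$, which is elementary geometry of the triangle. Domain issues ($u\in\L^r\cap\L^2$) are handled as in \cite{AEbook2023}: each $T_\lambda$ is bounded on $\L^2$, so the compositions are well defined on $\L^r\cap\L^2$ and intermediate spaces intersect $\L^2$. If $q=\infty$, Riesz--Thorin still applies on $\L^2\cap\L^s$ as in the reference.
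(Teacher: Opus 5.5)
Your proposal is correct and is essentially the argument the paper invokes, namely the bootstrapping of \cite[Lemma~4.4]{AEbook2023}: Riesz--Thorin on the triangle with vertices $(1/p,1/p)$, $(1/\varrho,1/q)$, $(1/q,1/q)$, followed by finitely many compositions whose $|\lambda|^{\gamma}$-factors add up. One harmless slip: with your parametrization the gain is exactly $\frac{1}{r}-\frac{1}{r_1}=\theta\bigl(\frac{1}{\varrho}-\frac{1}{q}\bigr)$ with $\theta=\frac{1/p-1/r}{1/p-1/\varrho}$, without the extra factor $\frac{1/p-1/q}{1/p-1/\varrho}$; this quantity is still increasing in $\frac{1}{p}-\frac{1}{r}$, so your uniform lower bound $\delta$ and the conclusion are unaffected.
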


\section{The parabolic operator \texorpdfstring{$\H$}{}: the \texorpdfstring{$\L^2$}{}-theory and off-diagonal estimates}\label{s3}

In this section, we define the parabolic operator $\H$, recall known results, and explain how to prove off-diagonal estimates for the family $(\lambda \D \E_\lambda)_{\lambda>0}$, a key tool for extrapolation.

\subsection{The parabolic operator \texorpdfstring{$\H$}{} and its parabolic Riesz transform}

We fix a matrix-valued function $A: \R^{n+1} \rightarrow M_n(\mathbb{C})$ with complex measurable coefficients, satisfying
\begin{equation}\label{eq: ellipticité A}
\left| A(x,t) \xi \cdot \zeta \right| \leq M |\xi|\,|\zeta|, \quad 
\nu |\xi|^2 \leq \mathrm{Re}(A(x,t) \xi \cdot \overline{\xi})
\end{equation}
for some $M, \nu > 0$ and for all $\xi, \zeta \in \C^n$ and $(x,t) \in \R^{n+1}$.

We may now define the parabolic operator $\H: \mathrm{E}_\mu \rightarrow \mathrm{E}_\mu^\star$ by setting, for all $u,v \in \mathrm{E}_\mu$,
\begin{equation*}
    \H(u)(v) = \iint_{\R^{n+1}} H_t D_t^{1/2} u(x,t) \cdot \overline{D_t^{1/2} v(x,t)} + \omega^{-1}(x)A(x,t) \nabla_x u(x,t) \cdot \overline{\nabla_x v(x,t)} \, \, \mathrm{d}\mu(x,t).
\end{equation*}
We write formally
\begin{equation*}
    \H = \partial_t - \omega^{-1} \mathrm{div}_x(A \nabla_x).
\end{equation*}
Likewise, we define $\H^\star : \mathrm{E}_\mu \to \mathrm{E}_\mu^\star$ as the adjoint of $\H$ associated with the adjoint form defining $\H$. Formally, it is given by $\H^\star := -\partial_t -  \omega^{-1} \mathrm{div}_x\!\left(A^\star \nabla_x\right)$, where $A^\star$ denotes the Hermitian adjoint of the matrix $A$.

Our starting point is the Kato square root estimate for $\H$ \cite{AEN2025}, stated as follows.
\begin{thm}\label{thm: théorie L2}
The maximal restriction of $\H$ to $\L_\mu^2(\R^{n+1})$ is an injective maximal accretive operator on $\L_\mu^2(\R^{n+1})$, and the domain of its unique maximal accretive square root is $\mathrm{E}_\mu$, that is, $\mathrm{dom}(\sqrt{\H}) = \mathrm{E}_\mu$. Moreover, there exists a constant $C=C(M,\nu,[\omega]_{A_2},n)\ge 1$ such that
\begin{equation}
  \frac{1}{C} \|\D u\|_{2,\mu} \le \|\sqrt{\H} u\|_{2,\mu} \le C \|\D u\|_{2,\mu}, \quad \text{for all } u \in \mathrm{E}_\mu.
\end{equation}
\end{thm}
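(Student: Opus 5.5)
The plan is to split the statement into its two parts, the $\L^2$ form theory (maximal accretivity and injectivity) and the quadratic estimate giving the two-sided Kato bound, and to treat the second by the first-order/quadratic-estimate method adapted to the weight $\omega$ and to the parabolic scaling.

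\textbf{Step 1: form theory and maximal accretivity.} The sesquilinear form defining $\H$ is bounded on $\mathrm{E}_\mu$ but not coercive, because the time part is skew. I would use the hidden coercivity trick. For small $\delta>0$, test $\H u+u$ against $(1+\delta H_t)u$. Since $H_t$ is skew-adjoint, isometric on $\L^2_\mu$, and commutes with $D_t^{1/2}$ and $\nabla_x$, one gets
\begin{equation*}
\mathrm{Re}\,(\H+1)(u)\bigl((1+\delta H_t)u\bigr) \ge \delta \|D_t^{1/2}u\|_{2,\mu}^2 + (\nu - \delta M)\|\nabla_x u\|_{2,\mu}^2 + \|u\|_{2,\mu}^2 .
\end{equation*}
Here the factor $\omega^{-1}$ in front of $A$ cancels against $\d\mu=\omega\,\d x\,\d t$. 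Lax--Milgram, applied to the modified form, then shows that $1+\H:\mathrm{E}_\mu\to\mathrm{E}_\mu^\star$ is invertible. Taking $v=u$ gives $\mathrm{Re}\,\H(u)(u)=\mathrm{Re}\iint\omega^{-1}A\nabla_x u\cdot\overline{\nabla_x u}\,\d\mu\ge 0$, because $\mathrm{Re}(H_t D_t^{1/2}u,D_t^{1/2}u)=0$. So the part of $\H$ in $\L^2_\mu$ is accretive, and since $1+\H$ is onto it is maximal accretive. For injectivity: $\H u=0$ with $u\in\mathrm{E}_\mu$ forces $\nabla_x u=0$ by accretivity. Applying the hidden-coercivity identity to the homogeneous equation then gives $D_t^{1/2}u=0$. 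Thus $u$ is constant, hence $u=0$ because $u\in\L^2_\mu$ and $\mu(\R^{n+1})=\infty$.

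\textbf{Step 2: reduction to a quadratic estimate.} I would write $\H=\D^\star B\D$ with $B=\mathrm{diag}(H_t,\omega^{-1}A)$, where the adjoint is taken in $\L^2_\mu$. Then $\H$ has a bounded $\mathrm{H}^\infty$-calculus type structure once one proves the square-function bound
\begin{equation*}
\int_0^\infty \|\lambda\,\H(1+\lambda^2\H)^{-1}u\|_{2,\mu}^2\,\frac{\d\lambda}{\lambda}\lesssim \|\D u\|_{2,\mu}^2 ,\qquad u\in\mathrm{E}_\mu,
\end{equation*}
together with the same bound for $\H^\star$. The representation $\sqrt{\H}u=c\int_0^\infty \H(1+\lambda^2\H)^{-1}u\,\d\lambda$ and duality against the $\H^\star$ estimate give $\|\sqrt{\H}u\|_{2,\mu}\lesssim\|\D u\|_{2,\mu}$. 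For the reverse inequality I would use $\|\D u\|^2_{2,\mu}\lesssim|\mathrm{Re}\,(\H u,(1+\delta H_t)u)|$, which by Step 1 is at most $\|\sqrt{\H}u\|\,\|\sqrt{\H^\star}(1+\delta H_t)u\|$. The upper bound for $\H^\star$ then closes the argument, because $H_t$ preserves $\mathrm{E}_\mu$. The identity $\mathrm{dom}(\sqrt{\H})=\mathrm{E}_\mu$ follows by density of $\Cont_0^\infty$.

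\textbf{Step 3 (main obstacle): the weighted parabolic $T(1)$ argument.} To prove the square function bound I would follow the Auscher--Hofmann--McIntosh--Tchamitchian scheme. Subtract a principal part $\gamma_\lambda(x,t)\cdot A_\lambda\D u$, where $A_\lambda$ is a weighted dyadic parabolic average. The error terms are controlled using $\L^2_\mu$ off-diagonal decay of $\lambda\D\E_\lambda$, proved by Gaffney/Davies perturbation. For the time part this perturbation uses the hidden coercivity again, since $D_t^{1/2}$ is nonlocal. The errors are further controlled by weighted Poincaré inequalities (Fabes--Kenig--Serapioni) and the doubling of $\mu$. What remains is to show that $|\gamma_\lambda|^2\,\d\mu\,\d\lambda/\lambda$ is a Carleson measure. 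This needs test functions $f_{Q,\xi}=\E_{\varepsilon\ell}(\Phi_{\xi})$ adapted to parabolic cubes, built from affine functions in $x$ and suitably truncated in $t$, together with a stopping-time/sectorial decomposition. This is where I expect the real difficulty. The truncation in $t$ interacts badly with the nonlocality of $D_t^{1/2}$, whose tails must be handled through the representation \eqref{eq: Dt representation}. At the same time, every estimate must scale correctly with $\omega(Q)$ rather than $|Q|$. I would first treat the $x$-component of $\gamma_\lambda$ exactly as in the weighted elliptic case of Cruz-Uribe--Rios, and handle the time component by an additional tail estimate in $t$ using the parabolic cubes $C_j^N(\Delta)$.
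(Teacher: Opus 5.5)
The paper does not prove this statement; it is imported from Ataei--Egert--Nystr\"om \cite{AEN2025}, the weighted counterpart of \cite{AEN2020}. Your Steps~1 and~2 are correct and cover the soft part of that theory:
\begin{itemize}
\item hidden coercivity with $1+\delta H_t$ combined with Lax--Milgram;
\item accretivity and injectivity;
\item the lower bound from $\mathrm{Re}\,\H(u)\bigl((1+\delta H_t)u\bigr)=\mathrm{Re}\langle\sqrt{\H}u,\sqrt{\H^\star}(1+\delta H_t)u\rangle_{2,\mu}$, together with the upper bound for $\H^\star$.
\end{itemize}
For the upper bound itself you do not need a square-function estimate for $\H^\star$. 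A maximal accretive operator on a Hilbert space already has a bounded $\mathrm{H}^\infty$-calculus, so $\|\sqrt{\H}u\|_{2,\mu}^2\simeq\int_0^\infty\|\lambda\H\E_\lambda u\|_{2,\mu}^2\,\frac{\d\lambda}{\lambda}$ directly.

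The genuine gap is Step~3. It is the entire content of the theorem, it is only announced, and one of its announced inputs is false. You claim $\L^2_\mu$ off-diagonal decay of $\lambda\D\E_\lambda$ by Gaffney/Davies perturbation, with hidden coercivity handling the time part. For $\lambda D_t^{1/2}\E_\lambda$ and sets separated in time, this fails:
\begin{itemize}
\item $D_t^{1/2}$ has a kernel of size $|t-s|^{-3/2}$. For $f$ supported in a time interval, $\E_\lambda f$ vanishes before that interval by causality, yet $D_t^{1/2}\E_\lambda f$ at time-distance $d$ before it is in general of size $d^{-3/2}$.
\item The Davies conjugate $e^{\alpha\phi(t)}D_t^{1/2}e^{-\alpha\phi(t)}$ is not a bounded perturbation of $D_t^{1/2}$, and hidden coercivity does nothing about non-locality.
\end{itemize}
This is exactly why the paper has exponential decay only for $\E_\lambda$ and $\lambda\nabla_x\E_\lambda$ (Proposition~\ref{prop: classical ODEs}), and for $\lambda D_t^{1/2}\E_\lambda$ only on spatially separated sets (Proposition~\ref{prop: L2 spatial supports}). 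In time it has merely polynomial decay $N^{-j\varepsilon}$ with $\varepsilon\le 2$ (Proposition~\ref{prop: time supports}). With such decay, the AHLMT error terms are no longer controlled by the standard argument. Those terms sum Poincar\'e-type bounds over parabolic annuli of relative measure $2^{jn}N^{2j}$. Moreover, the Poincar\'e inequality in $t$ is itself non-local, so Fabes--Kenig--Serapioni does not supply it.

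Structurally, the time block of your $B$ is the skew, non-local operator $H_t$, not an accretive multiplier. Set $\Theta_\lambda:=\lambda\E_\lambda\D^\star B$, so that $\lambda\H\E_\lambda u=\Theta_\lambda\D u$. Then formally $\Theta_\lambda(e_{n+1})=\lambda\E_\lambda D_t^{1/2}H_t(1)=0$, so there is no time component of $\gamma_\lambda$ to treat by a tail estimate. The real task is to absorb the whole $\partial_t$-part $\lambda\E_\lambda D_t^{1/2}H_tD_t^{1/2}u$ into the error terms. There, in the $t$-direction, you have neither exponential decay nor an accretivity-based stopping-time argument. Overcoming precisely this obstruction is what required new non-local estimates in \cite{AEN2020,AEN2025}. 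As it stands, your proposal reduces the theorem to its hardest step and leaves that step open. You must either cite \cite{AEN2025}, as the paper does, or supply those non-local estimates.
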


This result shows that the operator $\mathcal{R}_{\H} := \D\H^{-1/2}$, initially defined from $\operatorname{ran}(\sqrt{\H})$ into $\L_\mu^2(\R^{n+1})^{n+1}$, is of strong type $(2,2)$. Since $\sqrt{\H}$ is maximal accretive and injective on $\L_\mu^2(\R^{n+1})$, its range is dense in $\L_\mu^2(\R^{n+1})$, see \cite[Proposition 2.4]{egert2024harmonic}. Therefore, $\mathcal{R}_{\H}$ extends uniquely to a bounded operator from $\L_\mu^2(\R^{n+1})$ to $\L_\mu^2(\R^{n+1})^{n+1}$, still denoted by $\mathcal{R}_{\H}$. It is this extension that we call \textbf{the parabolic Riesz transform associated with $\H$}. To simplify the notation, we will abusively write $\L_\mu^2(\R^{n+1})$ instead of $\L_\mu^2(\R^{n+1})^{n+1}$.

\subsection{Off-diagonal estimates for \texorpdfstring{$( \mathcal{E}_\lambda)_{\lambda>0}$}{} and \texorpdfstring{$(\lambda \nabla_x \mathcal{E}_\lambda)_{\lambda>0}$}{} families}
In the sequel, we use the following notation for the resolvent family of $\H$ and its adjoint $\H^\star$. For $\lambda > 0$, we set
\begin{equation}
    \E_\lambda := (1+\lambda^2 \H)^{-1}, \quad \E_\lambda^\star := (1+\lambda^2 \H^\star)^{-1}.
\end{equation}
They are uniformly bounded adjoint families on $\L_\mu^2(\R^{n+1})$ by maximal accretivity of $\H$. We stress that the adjointess is with respect to $\langle \cdot, \cdot \rangle_{2,\mu}$. We also recall the following key result on off-diagonal estimates; for a proof, see \cite[Lemma~4.3 and Lemma~4.4]{AEN2025}.
\begin{prop}[Classical off-diagonal estimates]\label{prop: classical ODEs}
The two families $(\omega^{\frac{1}{2}} \E_\lambda \, \omega^{-\frac{1}{2}})_{\lambda>0}$ and $(\omega^{\frac{1}{2}} \lambda \nabla_x \E_\lambda \, \omega^{-\frac{1}{2}})_{\lambda>0}$ satisfy $\L^2$ off-diagonal estimates and the family $(\omega^{\frac{1}{2}} \lambda \D \E_\lambda \, \omega^{-\frac{1}{2}})_{\lambda>0}$ is $\L^2$-bounded, with constants $C$ and $c$ depending only on $M$, $\nu$, and $n$.
\end{prop}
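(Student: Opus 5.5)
The plan follows the unweighted Caccioppoli-type argument for elliptic resolvents (as in \cite[Chapter~4]{AEbook2023}), adapted to the parabolic form and conjugated by the weight. Since $\omega^{\frac12}$ is an isometry from $\L^2_\mu(\R^{n+1})$ onto $\L^2(\R^{n+1})$, the statement for the conjugated families $\omega^{\frac12} T \omega^{-\frac12}$ on $\L^2(\R^{n+1})$ is equivalent to the corresponding weighted statement: for measurable $E,F$ and $f\in\L^2_\mu$ supported in $E$,
\begin{equation*}
\norm{\one_F \E_\lambda f}_{2,\mu}+\norm{\one_F\lambda\nabla_x\E_\lambda f}_{2,\mu}\le Ce^{-c\,\d(E,F)/\lambda}\norm{f}_{2,\mu}.
\end{equation*}
Similarly, $\L^2$-boundedness of $\omega^{\frac12}\lambda\D\E_\lambda\omega^{-\frac12}$ amounts to $\norm{\lambda\D\E_\lambda f}_{2,\mu}\lesssim\norm{f}_{2,\mu}$.

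\textbf{Uniform bound.} Set $u=\E_\lambda f\in\mathrm{E}_\mu$, so that $u+\lambda^2\H u=f$ in $\mathrm{E}_\mu^\star$. Testing with $v=u$ and taking real parts, the Hilbert transform term $\iint H_tD_t^{1/2}u\,\overline{D_t^{1/2}u}\,\d\mu$ vanishes, because $H_t$ is skew-adjoint. Ellipticity \eqref{eq: ellipticité A} then gives
\begin{equation*}
\norm{u}_{2,\mu}^2+\nu\lambda^2\norm{\nabla_x u}_{2,\mu}^2\le\norm{f}_{2,\mu}\norm{u}_{2,\mu},
\end{equation*}
which bounds $\E_\lambda$ and $\lambda\nabla_x\E_\lambda$. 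For the time component, test with $v=H_t u$, which lies in $\mathrm{E}_\mu$ because $H_t$ commutes with $\nabla_x$ and $D_t^{1/2}$. The time term becomes $\lambda^2\norm{D_t^{1/2}u}_{2,\mu}^2$. The spatial term is bounded by $M\lambda^2\norm{\nabla_xu}_{2,\mu}^2$; this uses only that $\omega^{-1}A$ is bounded with respect to $\d\mu=\omega\,\d x\,\d t$ (the factor $\omega^{-1}$ cancels against $\omega$), which is why the constants do not depend on the weight. Hence $\lambda\D\E_\lambda$ is uniformly bounded.

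\textbf{Off-diagonal decay.} I use the exponential-multiplier (Davies) trick. Let $\varphi$ be bounded, Lipschitz in $x$ with $\norm{\nabla_x\varphi}_\infty\le1$, and take $\eta=e^{\alpha\varphi/\lambda}$ with $\alpha$ small. I aim to show that $\eta\E_\lambda\eta^{-1}$ and $\eta\lambda\nabla_x\E_\lambda\eta^{-1}$ are uniformly bounded on $\L^2_\mu$. The natural choice is $\varphi(x)=\min(\d(x,E),\d(E,F))$, which is constant on $E$ and on $F$. This yields decay $e^{-\alpha\d_x(E,F)/\lambda}$ in the spatial distance. For the resulting spatial perturbation, testing with $\eta^2\bar u$ produces commutator terms $\lambda^2\alpha\lambda^{-1}\iint|\nabla_x u|\,|u|\,\eta^2\,\d\mu$, which are absorbed for $\alpha$ small; this is the part that works cleanly.

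\textbf{Main obstacle: decay in time.} The time direction is the hard step, because $H_tD_t^{1/2}$ is nonlocal. I would first try to avoid it by working with $\partial_t$ directly: the weak formulation gives $\int\partial_t u\,\bar v=\iint H_tD_t^{1/2}u\,\overline{D_t^{1/2}v}$ for nice $v$. With $\eta$ depending on $t$ through $\psi(t)$ having $|\psi'|\le1/\lambda$, one obtains $\mathrm{Re}\int\partial_t u\,\overline{\eta^2u}=-\tfrac12\int|u|^2\partial_t(\eta^2)$. This contributes $\lambda^2\cdot\lambda^{-2}\alpha\norm{\eta u}^2$, which is again absorbable. Justifying this requires a truncation, $\eta$ bounded with bounded derivatives so that multiplication preserves $\mathrm{E}_\mu$ (\cite[Lemma 3.3]{AEN2025}), together with approximation. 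The time scale is parabolic: the separation $\sqrt{|t-s|}$ between $E$ and $F$ must be compared with $\lambda$. Linear weights in $t$ give decay $e^{-c|t-s|/\lambda^2}$, which dominates $e^{-c\sqrt{|t-s|}/\lambda}$ once $\sqrt{|t-s|}\gtrsim\lambda$; in the other regime, the exponential factor is bounded below and the uniform bound suffices. Combining the spatial and temporal cases, and using $\d(E,F)\le\max(\d_x,\d_t)$, gives the claimed estimates with $c$ and $C$ depending only on $M$, $\nu$, and $n$.
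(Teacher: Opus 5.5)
Your energy estimates are sound. The paper itself gives no argument for this proposition and cites \cite{AEN2025}; your hidden-coercivity/Davies route is the standard one. Testing the resolvent equation with $u$ and with $H_tu$ gives the uniform bounds for $\E_\lambda$ and $\lambda\D\E_\lambda$. The identity $\mathrm{Re}\iint H_tD_t^{1/2}u\,\overline{D_t^{1/2}(\eta^2u)}\,\mathrm{d}\mu=-\tfrac12\iint|u|^2\partial_t(\eta^2)\,\mathrm{d}\mu$ passes from $\Cont_0^\infty(\R^{n+1})$ to $\mathrm{E}_\mu$ by density.

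\textbf{The gap is the final combination step.} With separate weights you obtain decay $e^{-c\,\mathrm{d}_x(E,F)/\lambda}$ and $e^{-c\,\mathrm{d}_t(E,F)/\lambda}$. Here $\mathrm{d}_x$ and $\mathrm{d}_t$ only see the projections of $E$ and $F$ onto $\R^n$ and onto $\R$. For arbitrary sets one only has $\max(\mathrm{d}_x(E,F),\mathrm{d}_t(E,F))\le\mathrm{d}(E,F)$, so the inequality you invoke goes the wrong way. For example, take $n=1$, $S=[0,1]$, $R\ge2$, and
\begin{equation*}
E=(S\times S)\cup\bigl((R+S)\times(R^2+S)\bigr),\qquad F=\bigl((R+S)\times S\bigr)\cup\bigl(S\times(R^2+S)\bigr).
\end{equation*}
These sets have identical spatial and temporal projections, so $\mathrm{d}_x=\mathrm{d}_t=0$ and your two estimates give no decay, while $\mathrm{d}(E,F)\ge R-1$. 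Definition~\ref{def: toolbox} requires all measurable $E,F$. Splitting general sets into pieces on which one of your one-directional estimates applies needs infinitely many pieces, and then $\sum_k\norm{\one_{E_k}u}_2$ is not controlled by $\norm{\one_Eu}_2$.

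\textbf{How to repair it.} You need nothing beyond your own computations, but with one joint weight and the comparison done pointwise. Set $\tilde{\mathrm{d}}_\lambda((x,t),(y,s)):=\max(|x-y|,|t-s|/\lambda)$, $\varphi:=\min\{\tilde{\mathrm{d}}_\lambda(\cdot,E),\tilde{\mathrm{d}}_\lambda(E,F)\}$ and $\eta:=e^{\alpha\varphi/\lambda}$. Then:
\begin{itemize}
\item $\eta=1$ on $E$ and $\eta=e^{\alpha\tilde{\mathrm{d}}_\lambda(E,F)/\lambda}$ on $F$;
\item $|\nabla_x\varphi|\le1$ and $|\partial_t\varphi|\le\lambda^{-1}$.
\end{itemize}
Testing with $\eta^2u$ therefore produces your spatial and temporal error terms in a single inequality, both of size $O(\alpha)$; for instance, $\tfrac{\lambda^2}{2}\iint|u|^2|\partial_t\eta^2|\,\mathrm{d}\mu\le\alpha\norm{\eta u}_{2,\mu}^2$. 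For $\alpha=\alpha(M,\nu)$ small this gives $\norm{\eta u}_{2,\mu}+\lambda\norm{\eta\nabla_xu}_{2,\mu}\lesssim\norm{f}_{2,\mu}$, that is, decay $e^{-\alpha\tilde{\mathrm{d}}_\lambda(E,F)/\lambda}$.

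Now compare the distances at the level of points. If $|t-s|\ge\lambda^2$ then $\sqrt{|t-s|}\le|t-s|/\lambda$, hence $\mathrm{d}(P,Q)\le\max(\tilde{\mathrm{d}}_\lambda(P,Q),\lambda)$ for all points $P,Q$. Taking infima gives $\mathrm{d}(E,F)\le\max(\tilde{\mathrm{d}}_\lambda(E,F),\lambda)$, so $e^{-\alpha\tilde{\mathrm{d}}_\lambda(E,F)/\lambda}\le e^{\alpha}e^{-\alpha\,\mathrm{d}(E,F)/\lambda}$. This is your comparison of $\sqrt{|t-s|}$ with $\lambda$, but carried out before taking infima over $E\times F$, which is what makes it legitimate. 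Since $\eta$ is only Lipschitz, the approximation step you mention is still needed.

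A minor point: the spatial term is bounded by $M\lambda^2\norm{\nabla_xu}_{2,\mu}^2$ and coercive in $\L^2_\mu$ because $\omega^{-1}A$ (not $A$) satisfies the boundedness and accretivity bounds. If $\omega^{-1}$ merely cancelled against $\omega$, you would end up with unweighted norms.
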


\subsection{Off-diagonal estimates for \texorpdfstring{$(\lambda  \D \mathcal{E}_\lambda)_{\lambda>0}$}{} family}

\subsubsection{Off-diagonal estimates for the \texorpdfstring{$(\lambda D_t^{1/2} \mathcal{E}_\lambda)_{\lambda>0}$}{} family}

We begin with the spatial supports, for which we have the following result.

\begin{prop}[Off-diagonal estimates on spatial supports]\label{prop: L2 spatial supports}
There exist two constants $C = C(M,\nu,n)$ and $c = c(M,\nu,n)$ such that the following off-diagonal estimate hold:
\begin{equation}\label{eq: Dt space off}
    \|\one_{F\times\R} \, \omega^{\frac{1}{2}}  \lambda D_t^{1/2} \mathcal{E}_\lambda (\omega^{-\frac{1}{2}} \one_{E\times\R} u) \|_{2} \le C e^{-c\frac{\d(E,F)}{\lambda}} \|\one_{E\times\R} u\|_{2},
\end{equation}
for all $\lambda>0$, measurable sets $E,F\subset \R^n$ and $u\in \L^2(\R^{n+1})$, where $\d(E,F)$ denotes the euclidean distance between $E$ and $F$ in $\R^n$.
\end{prop}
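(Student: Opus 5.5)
Because $E\times\R$ and $F\times\R$ are invariant under time translations, the cut-offs $\one_{E\times\R}$ and $\one_{F\times\R}$ are multiplication operators in $x$ alone, and hence commute with $D_t^{1/2}$ and with the Hilbert transform $H_t$. I will therefore estimate $\one_{F\times\R} D_t^{1/2}\E_\lambda(\omega^{-1/2}\one_{E\times\R}u)$ through the sesquilinear form defining $\H$. The aim is to reduce the problem to the $\L^2$ off-diagonal estimates of Proposition~\ref{prop: classical ODEs} for $\E_\lambda$ and $\lambda\nabla_x\E_\lambda$. Those estimates already handle sets of the form $E\times\R$: the parabolic distance between $E\times\R$ and $F\times\R$ equals the Euclidean distance $\d(E,F)$ in $\R^n$.

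Set $f=\omega^{-1/2}\one_{E\times\R}u\in\L^2_\mu$ and $v=\E_\lambda f\in\mathrm{E}_\mu$. Fix a smooth function $\chi=\chi(x)$ with $\chi=1$ on $F$, $\chi=0$ near $E$, and $\|\nabla\chi\|_\infty\lesssim 1/\d(E,F)$. I would actually take an exponential cut-off, $\chi = e^{\alpha\phi}$-type with $\phi$ a Lipschitz distance function, to obtain exponential decay directly. Another option is to use a cut-off adapted to dyadic annuli and sum. Since $\chi$ does not depend on $t$, the form identity $v+\lambda^2\H v=f$ tested against $\chi^2 v$ gives
\begin{equation*}
\iint \chi^2|v|^2 + \lambda^2\iint H_tD_t^{1/2}v\cdot\overline{\chi^2 D_t^{1/2}v} + \lambda^2\iint \omega^{-1}A\nabla_x v\cdot\overline{\nabla_x(\chi^2 v)}\,\d\mu=\iint f\,\overline{\chi^2 v}\,\d\mu .
\end{equation*}
The right-hand side vanishes because $\chi=0$ on $E$. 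Here I use that $D_t^{1/2}(\chi^2 v)=\chi^2D_t^{1/2}v$. The key structural fact is that the time term equals $\int \chi^2\, H_t g\cdot\bar g$ with $g=D_t^{1/2}v$, and its real part vanishes: for each fixed $x$, $H_t$ is skew-adjoint on $\L^2(\R)$. So this term contributes nothing to the real part.

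The time term therefore carries no positive information, and I cannot read off $D_t^{1/2}v$ from coercivity. Instead I will use the imaginary part of the identity, or equivalently the identity tested against $\chi^2 H_tv$, which is admissible since $H_t$ preserves $\mathrm{E}_\mu$ and commutes with $\chi$. The time term then becomes $\lambda^2\iint\chi^2|D_t^{1/2}v|^2\,\d\mu$, because $H_t^*H_t=1$. The remaining terms are $\iint\chi^2 v\,\overline{H_tv}$, which is controlled by $\|\chi v\|^2$, and the spatial term $\lambda^2\iint\omega^{-1}A\nabla_xv\cdot\overline{\nabla_x(\chi^2H_tv)}$. The right-hand side $\iint f\,\overline{\chi^2H_tv}$ again vanishes, since $f$ is supported in $E\times\R$ and $\chi=0$ there. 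Expanding the spatial term with $\nabla_x(\chi^2H_tv)=\chi^2H_t\nabla_xv+2\chi\nabla\chi\,H_tv$, and using that $H_t$ is unitary on $\L^2(\R;\L^2_\omega)$, I get
\begin{equation*}
\lambda^2\|\chi D_t^{1/2}v\|_{2,\mu}^2\lesssim \|\chi v\|_{2,\mu}^2+\lambda^2\|\chi\nabla_xv\|^2_{2,\mu}+\lambda^2\|\nabla\chi\|_\infty\|\chi\nabla_xv\|_{2,\mu}\|\one_{\mathrm{supp}\nabla\chi}v\|_{2,\mu}.
\end{equation*}
The bounds on $A$ from \eqref{eq: ellipticité A} turn the $\omega^{-1}A$ factor into a constant multiple of $\omega\,\d x\,\d t$, which is why everything stays in $\L^2_\mu$.

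Every term on the right involves only $v=\E_\lambda f$ and $\lambda\nabla_x v$, restricted to the region where $\chi$ or $\nabla\chi$ is nonzero, which lies at distance comparable to $\d(E,F)$ from $E$. Proposition~\ref{prop: classical ODEs} bounds each of these by $e^{-c\d(E,F)/\lambda}\|\one_{E\times\R}u\|_2$, after conjugation by $\omega^{1/2}$; note that $\|\omega^{1/2}w\|_2=\|w\|_{2,\mu}$. The case $\d(E,F)\lesssim\lambda$ follows from the $\L^2$-boundedness part of the same proposition.

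The main obstacle is to keep exponential decay rather than polynomial decay when the cut-off is placed directly between $E$ and $F$. For this I will let $\chi$ be supported on $\{\d(x,E)\ge \d(E,F)/2\}$ with gradient of size $1/\d(E,F)$. Each term is then bounded by the off-diagonal estimates at distance $\d(E,F)/2$, and the factor $\lambda\|\nabla\chi\|_\infty\le 2\lambda/\d(E,F)$ can be absorbed by halving the constant $c$. This yields \eqref{eq: Dt space off}, with constants depending only on $M,\nu,n$.
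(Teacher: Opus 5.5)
Your proposal is correct and is essentially the paper's argument, namely the variational proof of \cite[Proposition~4.1]{BEK26} that the paper invokes: testing the resolvent equation against $\chi^2 H_t v$ with a cut-off $\chi=\chi(x)$ turns the time term into $\lambda^2\|\chi D_t^{1/2}v\|_{2,\mu}^2$, the remaining terms are controlled by the $\L^2$ off-diagonal estimates for $\E_\lambda$ and $\lambda\nabla_x\E_\lambda$ in Proposition~\ref{prop: classical ODEs}, and the regime $\d(E,F)\lesssim\lambda$ is handled by uniform boundedness. The only inaccuracy is your aside that taking the imaginary part of the identity tested against $\chi^2 v$ is ``equivalent'' to testing against $\chi^2 H_t v$; it is not, since $\langle H_t g,g\rangle$ gives no coercivity, but your argument never relies on that remark.
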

\begin{proof}
Using the variational equation and proceeding as in \cite[Proposition 4.1]{BEK26}, we prove that there exist two constants $C = C(M,\nu,n)$ and $c = c(M,\nu,n)$ such that     
 \begin{equation*}
    \|\one_{F\times\R} \lambda D_t^{1/2} \mathcal{E}_\lambda (\one_{E\times\R} u) \|_{2,\mu} \le C e^{-c\frac{\d(E,F)}{\lambda}} \|\one_{E\times\R} u\|_{2,\mu},
\end{equation*}
for all $\lambda>0$ and $u\in \L_\mu^2(\R^{n+1})$. This is equivalent to \eqref{eq: Dt space off}.
\end{proof}

For the temporal supports, we have the following result.

\begin{prop}[Off-diagonal estimates on temporal supports]\label{prop: time supports}
Let $p \in (1,2]$ be such that the families $(\omega^{\frac{1}{2}} \, \mathcal{E}_\lambda \, \omega^{-\frac{1}{2}})_{\lambda>0}$ and $(\omega^{\frac{1}{2}} \, \lambda D_t^{1/2}\mathcal{E}_\lambda \, \omega^{-\frac{1}{2}})_{\lambda>0}$ are $\L^p$-bounded, and that $\mu_{p'}$ is l.f. on $\R^{n+1}$. For every $N>1$, there exists a constant $C = C(N,p)$  such that the following off-diagonal estimates hold:
\begin{equation*}
    \|\one_{\R^n\times F} \, \omega^{\frac{1}{2}} \lambda D_t^{1/2}\mathcal{E}_\lambda \left ( \omega^{-\frac{1}{2}}  \one_{\R^n\times E} u \right ) \|_{p}
        \lesssim_p C\left(\frac{\lambda}{r} + \left(\frac{\lambda}{r}\right)^2\right)
        N^{-j\varepsilon}
        \|\one_{\R^n\times E} u\|_{p},
\end{equation*}
for all $\lambda, r > 0$, all $j \ge 1$ and all $u \in \L^p(\R^{n+1})\cap \L^2(\R^{n+1})$ in the following two scenarios:
\begin{enumerate}[label=(\roman*), ref=\roman*]
    \item\label{item: cube to annulus time} \emph{(From the cube to the annulus)} For $E= N^{j\delta}I_r$, $F= N^{j+1}I_{r} \setminus N^{j} I_{r}$ with $\varepsilon = 1+\frac{1}{1+p'}$. Here, the parameter $\delta \in [0,\gamma)$, where \begin{equation}\label{eq: epsilon*}
    \gamma= \frac{2+p'}{2+2p'} \in \left ( \frac{1}{2} , \frac{3}{4} \right ),
    \end{equation}
    
    \item\label{item: annulus to cube time} \emph{(From the annulus to the cube)} 
    For $E= N^{j+1}I_r \setminus N^j I_r$, $F= I_r$ with $\varepsilon = 2$.
\end{enumerate}

\end{prop}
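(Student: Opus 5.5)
The plan is to work with the unweighted conjugated functions. Set $f:=\omega^{-\frac12}\one_{\R^n\times E}u$ and $v:=\E_\lambda f$, so the quantity to estimate is $\|\one_{\R^n\times F}\,\omega^{\frac12}\lambda D_t^{1/2}v\|_p$. Multiplication by $\omega^{\pm\frac12}$ commutes with every operation acting only in $t$: time cutoffs, $D_t^{1/2}$ and its kernel representation \eqref{eq: Dt representation}. So all temporal localisation can be done on $\omega^{\frac12}v$ directly, and the hypotheses enter only through the $\L^p$-boundedness of $(\omega^{\frac12}\E_\lambda\omega^{-\frac12})$ and $(\omega^{\frac12}\lambda D_t^{1/2}\E_\lambda\omega^{-\frac12})$. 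Local finiteness of $\mu_{p'}$ is used to justify density and duality when passing from $\L^p\cap\L^2$ data. I would first prove the estimate for $u\in\L^p\cap\L^2$ with all identities holding in $\mathrm{E}_\mu$, and then extend.

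\textbf{Step 1: cutoff and the equation.} Take a smooth $\eta=\eta(t)$ equal to $1$ on $F$ and vanishing on a neighbourhood of $E$, with transition layer of length comparable to the temporal gap between $E$ and $F$. In scenario (\ref{item: cube to annulus time}) the gap is $\sim N^{j}r^2$, because $\delta<\gamma<1$ keeps $N^{j\delta}I_r$ well inside $N^jI_r$. In scenario (\ref{item: annulus to cube time}) the gap is also $\sim N^jr^2$. Hence $\|\eta'\|_\infty\lesssim (N^jr^2)^{-1}$. Since $\eta f=0$ and multiplication by $\eta$ is bounded on $\mathrm{E}_\mu$, the variational equation gives
\[
(1+\lambda^2\H)(\eta v)=\lambda^2\eta' v,\qquad\text{so}\qquad \eta v=\E_\lambda(\lambda^2\eta' v).
\]
Using the assumed $\L^p$ bounds twice, this yields
\[
\|\omega^{\frac12}\lambda D_t^{1/2}(\eta v)\|_p\lesssim_p \lambda^2\|\eta'\|_\infty\|\omega^{\frac12}v\|_p\lesssim (\lambda/r)^2N^{-j}\|\one_{\R^n\times E}u\|_p .
\]

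\textbf{Step 2: the nonlocal remainder.} On $\R^n\times F$ we have $D_t^{1/2}v=D_t^{1/2}(\eta v)+D_t^{1/2}((1-\eta)v)$. Since $F$ lies outside the support of $1-\eta$, the second term is given by the kernel formula $-\frac{1}{2\sqrt{2\pi}}\int (1-\eta(s))v(x,s)|t-s|^{-3/2}\,\d s$. I would bound it in one of two ways:
\begin{itemize}
\item by Young's inequality with the kernel $|\tau|^{-3/2}\one_{|\tau|\gtrsim d}$;
\item by Hölder in $s$ combined with the finite length of $F$ (or of $E$ in scenario (\ref{item: annulus to cube time})).
\end{itemize}
Here $d$ is the distance from $F$ to the support of $1-\eta$. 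The first way gives a factor $\lambda d^{-1/2}\sim(\lambda/r)N^{-j/2}$. The second gives $\lambda\, d^{-3/2}|F|^{1/p}|{\rm supp}(1-\eta)|^{1/p'}$, which is better when the relevant interval is short. In scenario (\ref{item: annulus to cube time}) the target $F=I_r$ has length $r^2$ while the distance is $\sim N^jr^2$. Combining these dimensions should produce the exponent $\varepsilon=2$, after choosing $\eta$ so that $1-\eta$ is supported in $N^{j}I_r$ near the source.

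\textbf{Step 3: the main obstacle, scenario (\ref{item: cube to annulus time}).} Here the source $N^{j\delta}I_r$ is large and the target annulus has length $\sim N^{j+1}r^2$. A single cutoff only gives decay like $N^{-j/2}$ from the kernel term, so the remainder $(1-\eta)v$ must be split further. The part of $v$ near $E$ (time length $\sim N^{j\delta}r^2$) is handled by Hölder/Young with distance $\sim N^jr^2$. The part of $v$ in the intermediate region would be controlled by first applying Step 1 to $v$ itself, i.e. an off-diagonal bound for $\E_\lambda$ obtained by the same cutoff identity, and then the kernel estimate. I would balance the two contributions by choosing the intermediate scale as a power $N^{j\theta}$ and optimising in $\theta$ under the Hölder exponents $p,p'$. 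I expect this optimisation to force the restriction $\delta<\gamma=\frac{2+p'}{2+2p'}$ and to produce $\varepsilon=1+\frac{1}{1+p'}$. This bookkeeping, and verifying that every factor comes out as $\lambda/r$ or $(\lambda/r)^2$ times $N^{-j\varepsilon}$ uniformly in $\lambda,r$, is the step I expect to be delicate.
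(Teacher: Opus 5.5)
You have the right mechanism: a time cutoff $\eta$, the identity $\eta v=\E_\lambda(\lambda^2\eta' v)$, and the kernel formula for $D_t^{1/2}$ away from the support. It is the non-dual form of the paper's argument, which applies the same commutator identity to $\eta\lambda\E^\star_\lambda D_t^{1/2}$ on $\L^{p'}$. But the quantitative part, which is the whole content of the proposition, is either wrong or missing.

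First, you misread the scaling: $N^jI_r=I_{N^jr}$ has half-length $N^{2j}r^2$, not $N^jr^2$. With your scaling, Step~1 certifies only $(\lambda/r)^2N^{-j}$. That caps the decay at $\varepsilon\le 1$, contradicting your own expectation of $\varepsilon=2$ in (ii) and $\varepsilon>1$ in (i).

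Second, Step~3 is not an argument. You assert that one cutoff cannot suffice, and propose an unspecified multiscale splitting whose optimisation you only \emph{expect} to produce $\gamma$ and $\varepsilon$. Feeding a temporal off-diagonal bound for $\E_\lambda$ into the kernel term would also generate factors like $(\lambda/r)^3$, which the prefactor $\frac{\lambda}{r}+(\frac{\lambda}{r})^2$ does not allow.

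The missing idea is the choice of transition scale: a single cutoff works if it transitions at an intermediate scale rather than across the whole gap.

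In (i), take $\eta=0$ on $N^{j\delta}I_r$ and $\eta=1$ outside $N^{j\theta}I_r$, with $\delta<\theta<1$. Step~1 then gives $(\lambda/r)^2N^{-2j\theta}$. Your H\"older bound uses distance $\sim_N N^{2j}r^2$, $|F|\sim N^{2j+2}r^2$ and $|\mathrm{supp}(1-\eta)|\sim N^{2j\theta}r^2$, and gives
\[
\lambda\,(N^{2j}r^2)^{-\frac32+\frac1p}\,(N^{2j\theta}r^2)^{\frac1{p'}}\sim_N \frac{\lambda}{r}\,N^{-j\left(1+\frac{2(1-\theta)}{p'}\right)}.
\]
Balancing $2\theta=1+2(1-\theta)/p'$ gives $\theta=\gamma$ and $\varepsilon=2\gamma=1+\frac{1}{1+p'}$. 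The condition $\delta<\gamma$ is exactly what lets $\eta$ vanish on $E$ while transitioning inside $N^{j\gamma}I_r$.

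In (ii), use a transition layer of length $\sim_N N^{2j}r^2$, kept at distance $\sim_N N^{2j}r^2$ from $I_r$; this gives $(\lambda/r)^2N^{-2j}$. Since $\mathrm{supp}(1-\eta)$ is unbounded there, replace $|\mathrm{supp}(1-\eta)|^{1/p'}$ by H\"older against the tail $\bigl(\int_{|s-t|\gtrsim N^{2j}r^2}|t-s|^{-3p'/2}\,\mathrm{d}s\bigr)^{1/p'}$. This yields $(\lambda/r)N^{-j(3-2/p')}$, and $3-2/p'\ge 2$.

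With these choices your direct route closes, and it avoids the passage to $\E^\star_\lambda$ on $\L^{p'}$ that the paper uses.
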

\begin{proof}
For $\eta \in \Cont^\infty_b(\R)$ and $u\in \L_\mu^2(\R^{n+1})$, we have for all $\lambda>0$:
\begin{align*}
    \eta \lambda \E^\star_\lambda D_t^{1/2}u=\lambda [\eta, \E_\lambda^\star]D_t^{1/2}u+\lambda \E^\star_\lambda \eta D_t^{1/2}u
    &= \lambda \E^\star_\lambda [1+\lambda^2 \H^\star, \eta]\E_\lambda^\star D_t^{1/2}u+\lambda \E^\star_\lambda \eta D_t^{1/2}u
    \\&= -\lambda^3 \E^\star_\lambda (\partial_t \eta) \E^\star_\lambda D_t^{1/2}u+\lambda \E^\star_\lambda \eta D_t^{1/2}u
    \\&=: \mathrm{I}^\eta_u+\mathrm{II}^\eta_u,
\end{align*}
where we used that $[A,B^{-1}]=B^{-1}[B,A]B^{-1}$ in the second equality, and that 
\begin{equation}\label{eq: commutator}
    [1+\lambda^2 \H^\star, \eta]=\lambda^2[\H^\star, \eta]=-\lambda^2\partial_t \eta \quad \text{on} \ \mathrm{E}_\mu \ (\text{valued in} \ \mathrm{E}^\star_\mu)
\end{equation}
in the third equality, since $\H^\star = -\partial_t -  \omega^{-1} \mathrm{div}_x\!\left(A^\star \nabla_x\right)$ and $\H^\star$ is local in its variables. For \eqref{eq: commutator}, one first proves the identity on $\Cont_0^\infty(\R^{n+1})$ and then extends it to $\mathrm{E}_\mu$ by density. Now, by duality, the families $(\omega^{\frac{1}{2}}\E^\star_\lambda)_{\lambda>0}$ and $(\omega^{\frac{1}{2}}\lambda \E^\star_\lambda D_t^{1/2})_{\lambda>0}$ are uniformly bounded from $\L^{p'}_{\mu_{p'}}(\R^{n+1}) \cap \L^{2}_{\mu}(\R^{n+1})$ to $\L^{p'}(\R^{n+1})$. Thus, for $u\in \L^{p'}_{\mu_{p'}}(\R^{n+1}) \cap \mathrm{E}_\mu$, we have
\begin{equation*}
    \norm{\omega^{\frac{1}{2}}\eta \lambda \E^\star_\lambda D_t^{1/2}u}_{p'} \le \norm{\omega^{\frac{1}{2}}\mathrm{I}^\eta_{u}}_{p'}+\norm{\omega^{\frac{1}{2}}\mathrm{II}^\eta_{u}}_{p'} \lesssim_p  \lambda^2 \norm{\partial_t \eta}_{\infty} \norm{\omega^{\frac{1}{2}}u}_{p'}+\lambda \norm{\omega^{\frac{1}{2}}\eta D_t^{1/2}u}_{p'}.
\end{equation*}
Now, one proceeds exactly as in \cite[Prop.~4.4 and Rem.~4.9]{BEK26}: one first takes $u \in \Cont_0^\infty(\R^{n+1})$, uses the representation formula \eqref{eq: Dt representation} for $D_t^{1/2}u$, chooses the appropriate $\eta$ for each scenario, and concludes by density. We stress that we assumed that $\mu_{p'}$ is l.f. on $\R^{n+1}$, so that $\Cont_0^\infty(\R^{n+1}) \subset \L^{p'}_{\mu_{p'}}(\R^{n+1}) \cap \L^{2}_{\mu}(\R^{n+1})$ and is a dense subspace of $\L^{p'}_{\mu_{p'}}(\R^{n+1})$. This is also used in the duality argument mentioned earlier.
\end{proof}

\subsubsection{Full gradient off-diagonal estimates and composition}
We state the following theorem.

\begin{thm}\label{thm: parabolicODEs}
Let $\varrho \in [1,2]$ such that the families $(\omega^{\frac{1}{2}} \,\mathcal{E}_\lambda \, \omega^{-\frac{1}{2}})_{\lambda>0}$ and $(\omega^{\frac{1}{2}} \, \lambda \D\mathcal{E}_\lambda \, \omega^{-\frac{1}{2}})_{\lambda>0}$ are $\L^\varrho$-bounded, and that $\mu_{\varrho'}$ is l.f. on $\R^{n+1}$. Let $p\in(\varrho,2]$ or $p=\varrho=2$.
Fix $N\ge 4$ and an integer $m \ge 1$. Then there exists a constant $C$ such that the off-diagonal estimates
\begin{align*}
    \|\one_F \, \omega^{\frac{1}{2}} \lambda \D\mathcal{E}^m_\lambda \left( \omega^{-\frac{1}{2}} \one_E u \right) \|_{p}
        \lesssim_\varrho C\biggl(\frac{\lambda}{r} +\Bigl(\frac{\lambda}{r}\Bigr)^{4N}\biggr)
        N^{-j \varepsilon}
        \|\one_E u\|_{p}
\end{align*}
holds for all $\lambda, r >0$, all $j \ge 2$ and all $u \in \L^p(\R^{n+1}) \cap \L^2(\R^{n+1})$ in the following scenarios:
\begin{enumerate}[leftmargin=*, itemsep=0pt]
    \item[(i)] \emph{(From the cube to the annulus)} For $E= \Delta_r$, $F= C_j^N(\Delta_r)$ with $\varepsilon = 1+\frac{1}{1+p'}$.

    \item[(i)]bis \emph{As in} (i), \emph{but with $E = 2^{j-1}Q_r \times N^{j\delta}I_r$, where $\delta \in [0,\nicefrac{\gamma}{2}]$ and $\gamma$ is defined in \eqref{eq: epsilon*}.}
    
    \item[(ii)] \emph{(From the annulus to the cube)} 
    For $E= C_j^N(\Delta_r)$, $F= \Delta_r$ with $\varepsilon = 2$.
\end{enumerate}
\end{thm}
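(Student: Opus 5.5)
The plan is to pass everything through the conjugated families $T_\lambda := \omega^{\frac12}\mathcal{E}_\lambda\omega^{-\frac12}$ and $G_\lambda := \omega^{\frac12}\lambda\D\mathcal{E}_\lambda\omega^{-\frac12}$, which act on unweighted $\L^2(\R^{n+1})$. I will prove the three scenarios first at the level $\L^2$ and then interpolate with the assumed $\L^\varrho$ bounds. Since $\omega^{\frac12}\lambda\D\mathcal{E}^m_\lambda\omega^{-\frac12}=G_\lambda T_\lambda^{m-1}$, I first treat $m=1$ and then pass to general $m$ by composition.

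\textbf{Step 1: the case $m=1$.} I split $\D=(\nabla_x,D_t^{1/2})$ and the annulus $C^N_j(\Delta_r)$ into a spatial part and a temporal part:
\[
C_j^N(\Delta_r)\subset \bigl((2^{j+1}Q_r\setminus 2^jQ_r)\times\R\bigr)\cup\bigl(\R^n\times(N^{j+1}I_r\setminus N^jI_r)\bigr).
\]
\begin{itemize}
\item For the $\nabla_x$ component, and for the spatial piece of the $D_t^{1/2}$ component, I use Proposition~\ref{prop: classical ODEs} and Proposition~\ref{prop: L2 spatial supports}. These give exponential decay $e^{-c2^jr/\lambda}$ in $\L^2$.
\item This exponential is dominated by $(\lambda/r)^{4N}N^{-2j}$ once we note $2^{j}\gtrsim$ a power controlled by $N^{j}$. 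This is where the $(\lambda/r)^{4N}$ factor comes from: $e^{-x}\lesssim x^{-4N}$ with $x=2^jr/\lambda$, and $2^{-4Nj}\le N^{-2j}$ since $N\ge4$.
\item For the temporal piece of $D_t^{1/2}$, I use Proposition~\ref{prop: time supports}, applied with the exponent $p$ replaced by $2$ (the hypotheses hold trivially at $2$). This gives the factor $(\lambda/r+(\lambda/r)^2)N^{-j\varepsilon}$ with the $\L^2$ value of $\varepsilon$.
\item The temporal pieces for $\nabla_x$ are handled in $\L^2$ by the exponential bounds as well, since the parabolic distance is $\gtrsim N^{j/2}r$.
\end{itemize}
In scenario (i)bis the source is larger, $E=2^{j-1}Q_r\times N^{j\delta}I_r$. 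Spatially it still sits at distance $\sim2^jr$ from the spatial annulus, and in time Proposition~\ref{prop: time supports}(i) allows $E=N^{j\delta}I_r$ with $\delta<\gamma$. The restriction $\delta\le\gamma/2$ leaves room for later interpolation.

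\textbf{Step 2: interpolation to $p\in(\varrho,2]$.} At exponent $\varrho$ the operators $\one_FG_\lambda\one_E$ are uniformly bounded by hypothesis. Riesz--Thorin between $\varrho$ and $2$ then gives decay $N^{-j\varepsilon\theta}$ with $\theta=\theta(p)$. Here I must be careful: the $\varepsilon$ in the statement depends on $p'$, namely $1+\frac1{1+p'}$. So rather than interpolate the $\L^2$ estimate, I would interpolate the temporal estimate of Proposition~\ref{prop: time supports}, applied at an exponent $\tilde p\in[\varrho,p)$ where it is available. This needs $D_t^{1/2}$ to be bounded in $\L^{\tilde p}$, which follows from the assumed $\L^\varrho$ bound and interpolation. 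The $\varepsilon(\tilde p)$ obtained there must be slightly larger than $\varepsilon(p)$, then traded down by the interpolation with the $\L^2$ bound. I expect this bookkeeping to be the main obstacle, in particular matching exactly $\varepsilon=1+\frac1{1+p'}$ and $2$ (perhaps the statement's $\lesssim_\varrho$ absorbs this). For the spatial parts, interpolating the $\L^2$ exponential decay with the $\L^\varrho$ bound keeps an exponential.

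\textbf{Step 3: the general case $m\ge1$.} I write $T_\lambda^{m-1}$ and decompose the intermediate space with a partition of $\R^{n+1}$ into dilates of $\Delta_r$ and annuli. Points near $E$ use the $\L^2$ off-diagonal estimates of $T_\lambda$ (Proposition~\ref{prop: classical ODEs}), interpolated with its $\L^\varrho$ bound. The remaining terms use Step 1 for $G_\lambda$. Summing the geometric series in $j$ gives the claim with an $m$-dependent constant.
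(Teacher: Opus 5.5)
Your overall architecture matches the paper's, which follows \cite[Theorem~4.10]{BEK26}. You split $\D=(\nabla_x,D_t^{1/2})$ and the supports into spatial and temporal pieces. You use the exponential $\L^2$ bounds of Propositions~\ref{prop: classical ODEs} and~\ref{prop: L2 spatial supports}, interpolated against the $\L^\varrho$ bound; this is where $p>\varrho$ is needed. And you use Proposition~\ref{prop: time supports} for the temporal piece of $D_t^{1/2}$.

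The gap is in Step~2, where you transfer that temporal piece to $\L^p$. Write $\varepsilon(q):=1+\frac{1}{1+q'}$.
\begin{enumerate}
\item $\varepsilon$ is increasing in $q$, so $\tilde p<p$ gives $\varepsilon(\tilde p)<\varepsilon(p)$. This is the opposite of what you assume.
\item No such interpolation can reach $\varepsilon(p)$. Riesz--Thorin between the $\L^{\tilde p}$ bound with decay $N^{-j\varepsilon(\tilde p)}$ and the $\L^2$ bound with decay $N^{-4j/3}$ gives at $p$ the decay $N^{-j[(1-\theta)\varepsilon(\tilde p)+\theta\varepsilon(2)]}$. Since $\varepsilon(q)-1=\frac{s}{1+s}$ with $s=\frac1{q'}$ affine in $\frac1q$, strict concavity makes this exponent strictly smaller than $\varepsilon(p)$ whenever $\tilde p<p<2$. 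For example, $\tilde p=1.2$, $p=1.5$ gives about $1.238<1.25$.
\item Because $j$ is unbounded, this loss cannot be hidden in the constant behind $\lesssim_\varrho$.
\end{enumerate}
In scenario (ii) the interpolation would be harmless, since $\varepsilon=2$ at every exponent. In (i) and (i)bis, however, your Step~2 does not yield the stated estimate.

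The missing observation is that this piece needs no interpolation: Proposition~\ref{prop: time supports} applies at the exponent $p$ itself. Its hypotheses hold at $p$ for two reasons.
\begin{enumerate}
\item $(\omega^{\frac12}\E_\lambda\omega^{-\frac12})_{\lambda>0}$ and $(\omega^{\frac12}\lambda D_t^{1/2}\E_\lambda\omega^{-\frac12})_{\lambda>0}$ are $\L^p$-bounded, by Riesz--Thorin between the assumed $\L^\varrho$ bounds and the $\L^2$ bounds of Proposition~\ref{prop: classical ODEs}.
\item $\mu_{p'}$ is l.f.\ because $p'\le\varrho'$ gives $\omega^{p'/2}\le 1+\omega^{\varrho'/2}$.
\end{enumerate}
Apply it with time supports $N^{j\delta}I_r$ and $N^{j+1}I_r\setminus N^jI_r$ (in the reverse order for (ii)). It yields exactly $\varepsilon=1+\frac{1}{1+p'}$ in (i) and (i)bis, and $\varepsilon=2$ in (ii). The prefactor satisfies $\frac{\lambda}{r}+(\frac{\lambda}{r})^2\le 2(\frac{\lambda}{r}+(\frac{\lambda}{r})^{4N})$. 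Interpolation with $\L^\varrho$ is needed only for the exponentially decaying pieces.

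Separately, Step~3 has the roles reversed. Intermediate pieces close to $E$ must be handled by the decay of $G_\lambda$ toward $F$. In (i) this requires the $m=1$ bound from a source larger than $E$, whose time extent must stay of the form $N^{j\delta'}I_r$ with $\delta'<\gamma$ so that Proposition~\ref{prop: time supports}(i) still applies. Pieces far from $E$ are the ones that use the exponential decay of $T_\lambda^{m-1}$.
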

The proof is exactly the same as that of~\cite[Theorem 4.10]{BEK26}. We combine Propositions~\ref{prop: classical ODEs}, \ref{prop: L2 spatial supports}, and \ref{prop: time supports} by decomposing the supports into spatial and temporal components and estimating them separately. For (i) bis, it is the same argument as for (i); see~\cite[Remark 4.14]{BEK26}. See the figure below for the first case. We omit the details.

\begin{center}
\scalebox{0.75}{%
\begin{tikzpicture}
    \pgfdeclarelayer{layer1}
    \pgfdeclarelayer{layer2}
    \pgfdeclarelayer{layer3}
    \pgfdeclarelayer{layer4}
    \pgfdeclarelayer{layer5}
    \pgfsetlayers{layer1,layer2,layer3,layer4,layer5}

    \tikzmath{\Ballx=2; \Ballt=2; \InnerAnnulusx=3; \InnerAnnulust=4; \OuterAnnulusx=5; \OuterAnnulust=6;}

    \begin{pgfonlayer}{layer1}
        \filldraw[very thick, pattern={Lines[angle=45,distance=5pt]}, pattern color=violet]
        (-\OuterAnnulusx,-\OuterAnnulust)
        rectangle (\OuterAnnulusx,\OuterAnnulust);
    \end{pgfonlayer}

    \begin{pgfonlayer}{layer2}
        \fill[fill=white]
        ({-\OuterAnnulusx+0.02},-\InnerAnnulust)
        rectangle (\OuterAnnulusx-0.02,\InnerAnnulust);
    \end{pgfonlayer}

    \begin{pgfonlayer}{layer3}
        \fill[pattern={Lines[angle=-45,distance=5pt]}, pattern color=teal]
        (-\OuterAnnulusx,-\InnerAnnulust)
        rectangle (\OuterAnnulusx,\InnerAnnulust);
    \end{pgfonlayer}

    \begin{pgfonlayer}{layer4}
        \filldraw[very thick, fill=white]
        (-\InnerAnnulusx,-\InnerAnnulust)
        rectangle (\InnerAnnulusx,\InnerAnnulust);
    \end{pgfonlayer}

    \begin{pgfonlayer}{layer5}
        \filldraw[very thick, fill=lightgray]
        (-\Ballx,-\Ballt)
        rectangle (\Ballx,\Ballt);

        \draw[black, very thick, ->]
        ({-\OuterAnnulusx-0.5},0) -- ({\OuterAnnulusx+0.5},0)
        node[above] {$x$};

        \draw[black, very thick, ->]
        (0,{-\OuterAnnulust-0.5}) -- (0,{\OuterAnnulust+0.5})
        node[left] {$t$};

        \path ({\OuterAnnulusx+1},\OuterAnnulust) node {$C_j^N(\Delta_r)$}
              (-0.5,1) node {$\Delta_r$};
    \end{pgfonlayer}

\end{tikzpicture}%
}
\captionof*{figure}{Spatial support (\textcolor{teal}{teal}) vs.\ time support (\textcolor{violet}{violet})}
\end{center}

\section{Gagliardo--Nirenberg inequalities: Proof of \ref{Point (1)} of Theorem \ref{thm: théorème principal}}\label{s4}

In this section, we first establish Gagliardo--Nirenberg inequalities. They will be the key ingredient in the proof of Point \ref{Point (1)} of Theorem \ref{thm: théorème principal}, which we present next.

\subsection{\texorpdfstring{$\L_\mu^2$}{}-parabolic Sobolev embeddings and Gagliardo--Nirenberg inequalities}
\begin{prop}\label{lem: Sobolev}
Let $q \in [2_\star,2)$ and assume that $\omega \in RH_{\frac{q'}{2}}(\R^n)$. Then
\begin{align*}
    \mathrm{E}_\mu \subset \L_{\mu_{q'}}^{q'}(\R^{n+1}).
\end{align*}
Moreover, there exists a constant $C = C([\omega]_{A_2}, [\omega]_{RH_{\frac{q'}{2}}},q, n)$ such that, for every $u \in \mathrm{E}_\mu$,
\begin{equation}\label{eq: Gagliardo-Nirenberg}
    \norm{u}_{\L_{\mu_{q'}}^{q'}(\R^{n+1})}
    \le C \|\D u\|_{2,\mu}^{\kappa}\|u\|_{2,\mu}^{1-\kappa},
\end{equation}
where $\kappa=\nicefrac{(n+2)}{2}-\nicefrac{(n+2)}{q'}=-\gamma_{2,q'}$. Observe that when $q=2_\star$, or equivalently $q'=2^\star$, one has $\kappa=1$. In this case, the embedding is homogeneous and therefore optimal.
\end{prop}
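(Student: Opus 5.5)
We reduce to the weighted Sobolev embeddings of \cite{auscherbaadi2025hardy} and to a one-dimensional fractional Sobolev inequality in time, then interpolate. Set $s=q'\in(2,2^\star]$. The target norm is
\[
\|u\|_{\L^{s}_{\mu_s}}^s=\iint |u|^s\omega^{s/2}\,dx\,dt
=\int_\R \|\omega^{1/2}u(\cdot,t)\|_{\L^s(\R^n)}^s\,dt .
\]
The natural route goes through the Fourier variable in time, i.e.\ through homogeneous inequalities. Fix $\tau$ and consider the partial Fourier transform $\hat u(\cdot,\tau)$. The spatial ingredient we need is the weighted Gagliardo--Nirenberg inequality of \cite{auscherbaadi2025hardy}: if $\omega\in A_2\cap RH_{s/2}$, then
\[
\|\omega^{1/2}f\|_{\L^s(\R^n)}\lesssim \|\nabla_x f\|_{2,\omega}^{\theta}\,\|f\|_{2,\omega}^{1-\theta},\qquad \theta=n\Bigl(\tfrac12-\tfrac1s\Bigr)\le 1.
\]
The endpoint here requires $s\le 2n/(n-2)$, which is implied by $s\le 2^\star$. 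The reverse Hölder hypothesis $RH_{q'/2}$ is exactly what lets one pass from $\omega^{s/2}$ averages to $(\omega\text{-average})^{s/2}$, and this is where it enters.

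Rather than working in Fourier space, which fails for $s\neq 2$, I would carry out a Littlewood--Paley or heat-semigroup argument in the parabolic scaling. The cleaner way is a pointwise/subrepresentation route. I would first prove the homogeneous endpoint $s=2^\star$:
\[
\|u\|_{\L^{2^\star}_{\mu_{2^\star}}}\lesssim \|\D u\|_{2,\mu}.
\]
This follows by writing $u=\mathcal I_1(\D u)$ via a parabolic fractional integral of order $1$. Concretely, $u=c\,\mathcal{I}(\sqrt{-\Delta_x+\partial_t}\,\text{-type symbol})$, and the parabolic Riesz potential kernel is bounded by $\d((x,t),(y,s))^{-(n+1)}$. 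With this representation the endpoint reduces to a two-weight inequality for the parabolic Riesz potential from $\L^2(\omega\,dx\,dt)$ to $\L^{2^\star}(\omega^{2^\star/2}dx\,dt)$. Equivalently, after conjugation, it is the statement that $\omega^{1/2}\mathcal I_1\omega^{-1/2}$ is bounded from $\L^2$ to $\L^{2^\star}$. By Sawyer/Pérez testing or the Muckenhoupt--Wheeden condition, this holds provided
\[
|\Delta|^{1/d-1/2+1/2^\star}\Bigl(\fint\omega^{2^\star/2}\Bigr)^{1/2^\star}\Bigl(\fint\omega^{-1}\Bigr)^{1/2}<\infty
\]
on parabolic cubes. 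Since $1/d-1/2+1/2^\star=0$, this is implied by $A_2$ together with $RH_{2^\star/2}$ (time is unweighted, so cubes factor).

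For general $s\in(2,2^\star)$ I would run the same argument with the fractional integral of order $\kappa$, using $|\D|^\kappa$. The two-weight condition becomes $A_2\cap RH_{s/2}$, since the exponents again balance: $\kappa/d=1/2-1/s$. This yields
\[
\|u\|_{\L^s_{\mu_s}}\lesssim \||\D|^\kappa u\|_{2,\mu}.
\]
Here $|\D|^\kappa$ denotes the fractional power of the parabolic operator $(-\omega^{-1}\mathrm{div}_x\omega\nabla_x)+D_t$ or a model thereof. Finally, $\||\D|^\kappa u\|_{2,\mu}\le\|\D u\|_{2,\mu}^\kappa\|u\|_{2,\mu}^{1-\kappa}$ holds by the moment inequality (spectral calculus) for the nonnegative self-adjoint operator $\mathcal{L}_0=-\omega^{-1}\mathrm{div}_x\omega\nabla_x + |D_t|$-type on $\L^2_\mu$. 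This operator has form domain $\mathrm{E}_\mu$ with $\|\mathcal L_0^{1/2}u\|=\|\D u\|$. Density of $\Cont_0^\infty$ in $\mathrm{E}_\mu$ then gives the embedding.

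The main obstacle is the representation step: expressing $u$ as a positive-kernel fractional integral of $\mathcal L_0^{\kappa/2}u$ with parabolic kernel bounds $\d^{\,2\cdot\kappa/2\cdot ... }$ adapted to the weighted measure. If the degenerate heat kernel is used, its Gaussian bounds involve $\omega(Q_{\sqrt t})$ rather than $t^{n/2}$, and the two-weight testing must be redone with $\omega$-volumes. If that becomes messy, I would instead apply the spatial inequality of \cite{auscherbaadi2025hardy} on each time slice and Minkowski with a one-dimensional Sobolev embedding $\dot H^{1/2}$-type in $t$ (in $\L^2_\omega$-valued form), then interpolate exponents via Hölder.
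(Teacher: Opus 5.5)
\textbf{There is a genuine gap at the endpoint.} The case $q=2_\star$ (so $q'=2^\star$, $\kappa=1$) is part of the statement, and your proposal does not prove it.

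Your main route needs a pointwise domination $|u|\lesssim \mathcal{I}_1(|\D u|)$ by a positive kernel bounded by $\mathrm{d}((x,t),(y,s))^{-(n+1)}$. You do not construct it (you flag it yourself as the main obstacle), and the natural candidates do not have this bound. One candidate is $u=\nabla_x\Gamma*\nabla_x u+(H_tD_t^{1/2}\Gamma)*D_t^{1/2}u$, obtained from $u=\Gamma*(\partial_t-\Delta_x)u$ and $\partial_t=H_tD_t^{1/2}D_t^{1/2}$. Another is the symbol $|\tau|^{1/2}/(|\xi|^2+|\tau|)$ in front of $D_t^{1/2}u$. In both cases the kernel acting on $D_t^{1/2}u$ behaves like $|x|^{2-n}|t|^{-3/2}$ for $|x|^2\ll|t|$ when $n\ge 3$, and is logarithmically singular for $n=2$. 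It is singular along the whole time axis because $|\tau|^{1/2}$ is not smooth on $\{\tau=0\}$. So Muckenhoupt--Wheeden cannot be invoked as written.

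Your check that $W=\omega^{1/2}$ satisfies the $A_{2,2^\star}$ condition over parabolic cubes is correct. The route could plausibly be rescued in two steps. First, use a smooth degree-zero conic cutoff in frequency, with $D_t^{1/2}$ only where $|\tau|\gtrsim|\xi|^2$ and $\nabla_x$ elsewhere. Second, invoke a Muckenhoupt--Wheeden theorem for the parabolic metric. None of this is in the proposal.

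Your fallback does not reach the endpoint either. After the slice-wise Gagliardo--Nirenberg inequality you must control $\int_\R\|\nabla_x u(t)\|_{2,\omega}^{\theta s}\|u(t)\|_{2,\omega}^{(1-\theta)s}\,\mathrm{d}t$ with $\theta s=\frac{n(s-2)}{2}$. This equals $2$ exactly when $s=2^\star$, so H\"older forces $\sup_t\|u(t)\|_{2,\omega}$. That quantity is not controlled by $\|u\|_{\mathrm{E}_\mu}$, since $H^{1/2}(\R)\not\subset\L^\infty(\R)$. In any case, a homogeneous estimate cannot involve $\|u\|_{2,\mu}$.

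For $s<2^\star$ the fallback is fine. H\"older in $t$ together with the $\L^2_\omega$-valued inequality $\|u\|_{\L^r(\R;\L^2_\omega(\R^n))}\lesssim\|D_t^{1/2}u\|_{2,\mu}^{1-2/r}\|u\|_{2,\mu}^{2/r}$, with $r=\frac{(1-\theta)s}{1-\theta s/2}$, gives exactly the exponent $\kappa$. This is a valid and somewhat more elementary alternative to the paper for $q\in(2_\star,2)$. Your primary non-endpoint route, by contrast, does not hold together as stated. It needs an unweighted positive kernel for Muckenhoupt--Wheeden, but the degenerate self-adjoint $\mathcal{L}_0$ for the moment inequality.

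\textbf{The missing idea.} Keep space and time coupled by using the homogeneous (Hardy--Littlewood--Sobolev) form of the spatial estimate instead of its Gagliardo--Nirenberg form. With $\alpha=n(\frac12-\frac1{q'})$, \cite[Theorem~1.1]{auscherbaadi2025hardy} gives, for every $t$,
\[
\|u(t)\|_{\L^{q'}_{\omega^{q'/2}}(\R^n)}\lesssim\|(-\Delta_\omega)^{\alpha/2}u(t)\|_{2,\omega}.
\]
The paper then uses the mixed-norm estimate
\[
\|(-\Delta_\omega)^{\alpha/2}u\|_{\L^{2/\alpha}(\R;\L^2_\omega(\R^n))}\lesssim\|\nabla_x u\|_{2,\mu}^{\alpha}\,\|D_t^{1/2}u\|_{2,\mu}^{1-\alpha},
\]
taken from \cite[Proposition~5.4]{auscherbaadi2024fundamental}. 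It also follows from two ingredients: the $\L^2_\omega$-valued one-dimensional Sobolev embedding of order $\frac{1-\alpha}{2}$ in $t$, and H\"older's inequality in the joint spectral calculus of the commuting operators $(-\Delta_\omega)^{1/2}$ and $D_t^{1/2}$.

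Since $2/\alpha=2^\star$ exactly when $q'=2^\star$, the endpoint follows directly, with no kernel representation and no two-weight theory. For $q\in(2_\star,2)$, the paper combines this $\L^{2/\alpha}_t$ bound with the $\L^2_t$ bound from the moment inequality by H\"older in time.
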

\begin{proof}
Fix $u \in \mathrm{E}_\mu$. Set $\alpha = n\left(\frac{1}{2} - \frac{1}{q'}\right)$. We define the degenerate Laplacian $-\Delta_\omega$ via the method of forms. We refer to \cite{auscherbaadi2025hardy} for details. By \cite[Theorem 1.1]{auscherbaadi2025hardy}, there exists a constant $C_1 = C_1([\omega]_{A_2}, [\omega]_{RH_{\frac{q'}{2}}}, q, n)$ such that we have
\begin{equation}\label{eq: HLS}
    \norm{u(t)}_{\L^{q'}_{\omega^{q'/2}}(\R^n)} \le C_1 \norm{((-\Delta_\omega)^{\frac{\alpha}{2}} u)(t)}_{2,\omega}
    \quad \text{for all } t \in \R.
\end{equation}
Since $q' \le 2^\star$, and distinguishing between the cases $n \in \{1,2\}$ and $n \ge 3$, it is easy to see that $\alpha \in (0,1]$. Set $r = \frac{2}{\alpha} \in [2,\infty)$. This is equivalent to
\begin{equation}\label{eq: compatibilité}
\frac{1}{r} + \frac{n}{2q'} = \frac{n}{4}.
\end{equation}
By taking the $\L^r$-norm of \eqref{eq: HLS} and using \cite[Proposition 5.4]{auscherbaadi2024fundamental} with $S = (-\Delta_\omega)^{1/2}$ therein, there exists a constant $C_2 = C_2(r)$ such that
\begin{align*}
\| u \|_{\L^r(\R;\L^{q'}_{\omega^{q'/2}}(\R^n))}
\leq C_1 \left\| (-\Delta_\omega)^{\frac{\alpha}{2}} u \right\|_{\L^r(\R;\L^2_\omega(\R^n))}
\leq C_1 C_2 \|\nabla_x u \|_{2,\mu}^\alpha \|D_t^{1/2}u \|_{2,\mu}^{1-\alpha}  \le C_1 C_2 \norm{\D u}_{2,\mu},
\end{align*}
where we used the trivial identity $\norm{\nabla_x \cdot}_{2,\omega} = \norm{(-\Delta_\omega)^{\frac{1}{2}} \cdot}_{2,\omega}$ in the penultimate inequality. The case $q=2_\star$ follows directly, since solving for $r=q'$ in \eqref{eq: compatibilité} yields $r=q'=2^\star$. The case $q\in(2_\star,2)$ follows by interpolation. First, taking the $\L^2$-norm of \eqref{eq: HLS} and using an interpolation inequality (the moment inequality) yields
\begin{align*}
    \| u \|_{\L^2(\R;\L^{q'}_{\omega^{q'/2}}(\R^n))}
\leq C_1 \| (-\Delta_\omega)^{\frac{\alpha}{2}} u \|_{\L^2(\R;\L^2_\omega(\R^n))} &\le C_1 \left\|  u \right\|^{1-\alpha}_{\L^2(\R;\L^2_\omega(\R^n))} \| (-\Delta_\omega)^{\frac{1}{2}} u \|^\alpha_{\L^2(\R;\L^2_\omega(\R^n))} \\&= C_1 \left\|  u \right\|^{1-\alpha}_{2,\mu} \left\| \nabla_x u \right\|^\alpha_{2,\mu}
\\& \le C_1 \left\|  u \right\|^{1-\alpha}_{2,\mu} \left\| \D u \right\|^\alpha_{2,\mu},
\end{align*}
To conclude, note that since $q' \le 2^\star$, we have $q' \in (2,r]$, and we write $\frac{1}{q'} = \frac{1-\theta}{2} + \frac{\theta}{r}$ with $\theta \in (0,1]$. Therefore, by H\"older's inequality and the previous inequalities, 
\begin{align*}
    \norm{u}_{\L^{q'}_{\mu_{q'}}(\R^{n+1})} \le \norm{u}^\theta_{\L^r(\R;\L^{q'}_{\omega^{q'/2}}(\R^n))}  \| u \|^{1-\theta}_{\L^2(\R;\L^{q'}_{\omega^{q'/2}}(\R^n))} \le C \norm{\D u}^{\theta+\alpha(1-\theta)}_{2,\mu} \norm{u}^{(1-\alpha)(1-\theta)}_{2,\mu},
\end{align*}
where $C = C([\omega]_{A_2}, [\omega]_{RH_{\frac{q'}{2}}}, q, n)$ is a constant. As $\frac{1}{q'} = \frac{1-\theta}{2} + \frac{\theta}{2}\alpha$, we have $\theta(1-\alpha)= 1-\frac{2}{q'}$. The result follows since
\begin{equation*}
    \theta+\alpha(1-\theta)=\alpha+\theta(1-\alpha)
    = n\left(\frac{1}{2}-\frac{1}{q'}\right)
      +2\left(\frac{1}{2}-\frac{1}{q'}\right)=\kappa .
\end{equation*}
\end{proof}

\begin{rem}
The above result combines Hardy--Littlewood--Sobolev estimates in time and space. For the latter, if one directly uses existing results from the literature, namely \cite{muckenhoupt1974weighted} followed by interpolation \cite{auscher1997holomorphic} (see \cite[Corollary~4.3]{auscherbaadi2025hardy} for this argument), one is then led to assume that $n \ge 3$ and that $\omega$ belongs to a smaller reverse H\"older class. More precisely, setting $2^\star_e := \frac{2n}{n-2}$, the upper ``elliptic'' Sobolev 
conjugate of $2$, this argument requires
\begin{equation*}
    \omega \in RH_{\frac{n}{n-2}}(\R^{n}) 
    = RH_{\frac{2^\star_e}{2}}(\R^{n}) 
    \subsetneq RH_{\frac{2^\star}{2}}(\R^{n}) 
    \quad \text{since } \ 2^\star < 2^\star_e.
\end{equation*}
\end{rem}

We conclude by stating the following corollary, which is of independent interest.
\begin{cor}\label{cor: many functions}
If $\omega \in RH_{1+\frac{2}{n}}(\R^n)$, then $\L^{2_\star}_{\mu_{2_\star}}(\R^{n+1}) \cap \L^2_\mu(\R^{n+1}) \subset \operatorname{ran}(\sqrt{\mathcal{H}})$.
\end{cor}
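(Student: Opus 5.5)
The plan is a duality argument combined with the homogeneous embedding of Proposition~\ref{lem: Sobolev} in the endpoint case $q=2_\star$, $q'=2^\star$, where $\kappa=1$:
\begin{equation*}
\norm{v}_{\L^{2^\star}_{\mu_{2^\star}}(\R^{n+1})}\le C\norm{\D v}_{2,\mu},\qquad v\in\mathrm{E}_\mu .
\end{equation*}
Since $\sqrt{\H}$ is maximal accretive and injective on $\L^2_\mu(\R^{n+1})$, a Hilbert space, we have $\overline{\operatorname{ran}(\sqrt{\H})}=\L^2_\mu$. Moreover, $f\in\operatorname{ran}(\sqrt{\H})$ holds as soon as the functional $v\mapsto\langle f,v\rangle_{2,\mu}$ is bounded on $\mathrm{dom}(\sqrt{\H^\star})=\mathrm{E}_\mu$ with respect to the norm $\norm{\sqrt{\H^\star}v}_{2,\mu}$. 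This is the usual characterization $\operatorname{ran}(T)=\{f:\ |\langle f,v\rangle|\lesssim\norm{T^\star v}\}$ for closed operators, valid when $\operatorname{ran}(T)$ is closed in the appropriate sense. To avoid closed-range issues, I would work with the approximants instead.

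Fix $f\in\L^{2_\star}_{\mu_{2_\star}}\cap\L^2_\mu$. For $v\in\mathrm{E}_\mu$, H\"older's inequality with the pairing
\begin{equation*}
\d\mu=\omega^{\frac{2_\star}{2}\cdot\frac{1}{2_\star}}\,\omega^{\frac{2^\star}{2}\cdot\frac{1}{2^\star}}\d x\d t
\end{equation*}
(the exponents of $\omega$ add to $1$) gives
\begin{equation*}
|\langle f,v\rangle_{2,\mu}|\le\norm{f}_{\L^{2_\star}_{\mu_{2_\star}}}\norm{v}_{\L^{2^\star}_{\mu_{2^\star}}}\lesssim\norm{f}_{\L^{2_\star}_{\mu_{2_\star}}}\norm{\D v}_{2,\mu}\lesssim\norm{f}_{\L^{2_\star}_{\mu_{2_\star}}}\norm{\sqrt{\H^\star}v}_{2,\mu}.
\end{equation*}
The last step uses Theorem~\ref{thm: théorie L2} applied to $\H^\star$, which belongs to the same class.

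Next, define $g_\varepsilon:=\sqrt{\H}(\varepsilon+\H)^{-1}f$. Then $\sqrt{\H}\,(\varepsilon+\H)^{-1}\sqrt{\H}f\to f$ is not available directly, so instead I would set $u_\varepsilon:=(\varepsilon+\H)^{-1}\sqrt{\H}\cdot$ in the form $u_\varepsilon=\sqrt{\H}(\varepsilon+\H)^{-1}f$ and note that $\H(\varepsilon+\H)^{-1}f=\sqrt{\H}u_\varepsilon\to f$ in $\L^2_\mu$, by functional calculus and injectivity. The key step is a uniform bound on $\norm{u_\varepsilon}_{2,\mu}$. By duality,
\begin{equation*}
\norm{u_\varepsilon}_{2,\mu}=\sup_{\norm{h}_{2,\mu}\le 1}|\langle f,(\varepsilon+\H^\star)^{-1}\sqrt{\H^\star}h\rangle_{2,\mu}|.
\end{equation*}
The element $v=(\varepsilon+\H^\star)^{-1}\sqrt{\H^\star}h$ lies in $\mathrm{E}_\mu$ and satisfies $\norm{\sqrt{\H^\star}v}_{2,\mu}=\norm{\H^\star(\varepsilon+\H^\star)^{-1}h}_{2,\mu}\lesssim 1$, uniformly in $\varepsilon$, by sectoriality. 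Combining this with the H\"older--Sobolev estimate above gives
\begin{equation*}
\sup_\varepsilon\norm{u_\varepsilon}_{2,\mu}\lesssim\norm{f}_{\L^{2_\star}_{\mu_{2_\star}}}.
\end{equation*}

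To conclude, extract a subsequence with $u_\varepsilon\rightharpoonup u$ weakly in $\L^2_\mu$. Since $\sqrt{\H}$ is closed, its graph is weakly closed, and $\sqrt{\H}u_\varepsilon\to f$ strongly. Hence $u\in\mathrm{dom}(\sqrt{\H})$ and $\sqrt{\H}u=f$, so $f\in\operatorname{ran}(\sqrt{\H})$.

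The only delicate points are as follows. First, the H\"older pairing must be valid; it is valid because $f\in\L^2_\mu$ and $v\in\L^{2^\star}_{\mu_{2^\star}}$ by Proposition~\ref{lem: Sobolev}, so the integrals make sense. Second, the uniform bound $\norm{\H^\star(\varepsilon+\H^\star)^{-1}}\le 2$ is needed, and this is standard for maximal accretive operators. I expect the main care to be in the functional calculus identities for $u_\varepsilon$, which follow from commuting resolvents with $\sqrt{\H}$.
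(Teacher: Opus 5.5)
Your proof is correct. Its core is the same estimate the paper uses: H\"older's inequality with the conjugate pair $(2_\star,2^\star)$, the endpoint embedding of Proposition~\ref{lem: Sobolev} (where $\kappa=1$), and the Kato bound for the adjoint. Together these give $|\langle f,v\rangle_{2,\mu}|\lesssim\|f\|_{\mathrm{L}^{2_\star}_{\mu_{2_\star}}}\|\sqrt{\mathcal{H}^\star}v\|_{2,\mu}$ for all $v\in\mathrm{E}_\mu$.

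You differ only in how you turn this a priori bound into $f\in\operatorname{ran}(\sqrt{\mathcal{H}})$. The paper proves the inclusion for $\mathcal{H}^\star$, using the Riesz transform bound for $\mathcal{H}$, which mirrors your choice. It reads the bound as $f\in\mathrm{dom}((\mathcal{H}^{-1/2})^\star)$ and invokes $(\mathcal{H}^{-1/2})^\star=(\sqrt{\mathcal{H}^\star})^{-1}$ from the functional calculus. You instead build $u_\varepsilon=\sqrt{\mathcal{H}}(\varepsilon+\mathcal{H})^{-1}f$, bound it uniformly by duality, and pass to a weak limit using weak closedness of the graph of $\sqrt{\mathcal{H}}$.

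That works, but it is not more elementary. In your duality step the test element must be $v=\sqrt{\mathcal{H}^\star}(\varepsilon+\mathcal{H}^\star)^{-1}h$. Your expression $(\varepsilon+\mathcal{H}^\star)^{-1}\sqrt{\mathcal{H}^\star}h$ only makes sense for $h\in\mathrm{dom}(\sqrt{\mathcal{H}^\star})$. Identifying $v$ as the image of $h$ under the adjoint of $\sqrt{\mathcal{H}}(\varepsilon+\mathcal{H})^{-1}$ uses $(\sqrt{\mathcal{H}})^\star=\sqrt{\mathcal{H}^\star}$, a functional-calculus fact of the same kind as the one the paper cites.

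The closed-range caveat that pushed you toward approximants is also unfounded. Let $T$ be any closed, densely defined operator on a Hilbert space with $|\langle f,v\rangle|\le C\|T^\star v\|$ for all $v\in\mathrm{dom}(T^\star)$. Then $T^\star v\mapsto\langle f,v\rangle$ is well defined and bounded on $\operatorname{ran}(T^\star)$. The Riesz representation theorem gives $u$ with $\langle u,T^\star v\rangle=\langle f,v\rangle$ for all such $v$, so $u\in\mathrm{dom}(T^{\star\star})=\mathrm{dom}(T)$ and $Tu=f$.

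Applied to $T=\sqrt{\mathcal{H}}$ (with $T^\star=\sqrt{\mathcal{H}^\star}$, $\mathrm{dom}(T^\star)=\mathrm{E}_\mu$), this finishes the proof right after your H\"older--Sobolev--Kato chain. It is exactly the paper's argument in different notation. Your $u_\varepsilon$ construction amounts to re-proving this general fact by hand, though it does make the bound $\|u\|_{2,\mu}\lesssim\|f\|_{\mathrm{L}^{2_\star}_{\mu_{2_\star}}}$ explicit.
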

\begin{proof}
We will prove this for $\H^*$ in place of $\H$, which belongs to the same class of operators. Since $\H$ is injective, we have $(\sqrt{\H^*})^{-1} = (\H^*)^{-\nicefrac 12} = (\H^{-\nicefrac 12})^*$ as closed operators in the sectorial functional calculi for $\H$ and $\H^*$, see e.g.\ \cite{egert2024harmonic}. Thus, the claim is that $\L^{2_*}_{\mu_{2_\star}}(\R^{n+1}) \cap \L^2_\mu(\R^{n+1}) \subset \mathrm{dom}((\H^{-\nicefrac 12})^*)$. 

\noindent Fix $u \in \L^{2_\star}_{\mu_{2_\star}}(\R^{n+1}) \cap \L^2_\mu(\R^{n+1})$. Proposition~\ref{lem: Sobolev} and the $\L^2_\mu$-boundedness of the Riesz transform yield constants $C_1=C_1([\omega]_{RH_{1+\frac{2}{n}}},[\omega]_{A_2},n)$ and $C_2=C_2(M,\nu,[\omega]_{A_2},n)$ such that
\begin{align*}
\abs{\langle u , \H^{-\nicefrac{1}{2}} v \rangle_{2,\mu}} \leq \norm{u}_{\L^{2_\star}_{\mu_{2_\star}}(\R^{n+1})} \norm{\H^{-\nicefrac{1}{2}} v}_{\L^{2^\star}_{\mu_{2^\star}}(\R^{n+1})} &\leq C_1 \norm{u}_{\L^{2_\star}_{\mu_{2_\star}}(\R^{n+1})} \norm{\D \H^{-\nicefrac{1}{2}} v}_{2,\mu} \\&\leq C_1 C_2 \norm{u}_{\L^{2_\star}_{\mu_{2_\star}}(\R^{n+1})} \norm{v}_{2,\mu},
\end{align*}
for all $v \in \mathrm{dom}(\H^{-\nicefrac{1}{2}})$ and the claim follows.     
\end{proof}
By the above result, if $\omega \in RH_{1+\frac{2}{n}}(\R^n)$, then $\mathcal{R}_\H$ can be defined on $\Cont_0^\infty(\R^{n+1})$ and extended by density, rather than being defined on an abstract dense subspace of $\L_\mu^2(\R^{n+1})$.

\subsection{Proof of \ref{Point (1)} of Theorem \ref{thm: théorème principal}}\label{s42}
Fix $q \in [2_\star,2)$ and assume that $\omega \in RH_{\frac{q'}{2}}(\R^n)$. By \eqref{eq: Gagliardo-Nirenberg} in Proposition \ref{lem: Sobolev}, there exists a constant $C = C([\omega]_{A_2}, [\omega]_{RH_{\frac{q'}{2}}}, q,n)$ such that
\begin{align*}
    \norm{\omega^{\frac{1}{2}} \mathcal{E}_\lambda \, \omega^{-\frac{1}{2}} u}_{q'}=\norm{\mathcal{E}_\lambda \, \omega^{-\frac{1}{2}} u}_{\L_{\mu_{q'}}^{q'}(\R^{n+1})} &\le C \norm{\mathbb{D} \E_\lambda \, \omega^{-\frac{1}{2}} u}^{-\gamma_{2,q'}}_{2,\mu} \norm{ \E_\lambda \, \omega^{-\frac{1}{2}} u}^{1+\gamma_{2,q'}}_{2,\mu} 
    \\&= C \lambda^{\gamma_{2,q'}}  \norm{\lambda\mathbb{D} \E_\lambda \, \omega^{-\frac{1}{2}} u}^{-\gamma_{2,q'}}_{2,\mu} \norm{ \E_\lambda \, \omega^{-\frac{1}{2}} u}^{1+\gamma_{2,q'}}_{2,\mu} 
    \\&\le \widetilde{C} \, \lambda^{\gamma_{2,q'}} \norm{u}_{2},
\end{align*}
for all $\lambda>0$ and $u\in \L^2(\R^{n+1})$, where we used Proposition \ref{prop: classical ODEs} in the last inequality, and $\widetilde{C}=\widetilde{C}(M,\nu,[\omega]_{A_2}, [\omega]_{RH_{\frac{q'}{2}}}, n)$ a constant. Thus, the family $\left ( \omega^{\frac{1}{2}} \mathcal{E}_\lambda \, \omega^{-\frac{1}{2}} \right )_{\lambda>0}$ is $\L^{2}-\L^{q'}$ bounded. The same holds for the adjoint family $\left( \omega^{\frac{1}{2}} \mathcal{E}^\star_\lambda \, \omega^{-\frac{1}{2}} \right)_{\lambda>0}$, and therefore $\left ( \omega^{\frac{1}{2}} \mathcal{E}_\lambda \, \omega^{-\frac{1}{2}} \right )_{\lambda>0}$ is $\L^{q}-\L^2$ bounded by adjointness. We recall that the adjointess of $\left (  \mathcal{E}_\lambda\right )_{\lambda>0}$ and $\left(\mathcal{E}^\star_\lambda \right)_{\lambda>0}$ is with respect to $\langle \cdot, \cdot \rangle_{2,\mu}$.

By interpolation with the $\L^2$ off-diagonal estimates in Proposition~\ref{prop: classical ODEs}, the family $\bigl( \omega^{\frac{1}{2}} \mathcal{E}_\lambda \, \omega^{-\frac{1}{2}} \bigr)_{\lambda>0}$ satisfies $\L^r-\L^2$ off-diagonal estimates for all $r \in (q,2]$. In particular, by Point~(4) of Lemma~\ref{lem: Toolbox}, this family is $\L^r$-bounded for all $r \in (q,2]$, and therefore $p_-(\H) \le q$. We are done if $q=2_\star$. If $q\in (2_\star,2)$, to prove that $p_-(\H) < q$, we use the open-ended property of reverse H\"older classes (Proposition~\ref{prop:weights}, Point~(2)) to pick $\tilde{q}\in (2_\star,q)$ such that $\omega \in RH_{\frac{\tilde{q}'}{2}}(\R^n)$. Then, by the same reasoning, $p_-(\H) \le \tilde{q}<q$. The other integrability condition in the definition of $p_-(\H)$ is omitted, as it is trivial.

Finally, to prove that $p_-(\H)\in [1,2)$, by Proposition~\ref{prop:weights}, Point~(3), we have $\omega\in RH_{\frac{\varrho_\omega'}{2}}(\R^n)$ for some $\varrho_\omega \in (2_\star,2)$ depending only on $[\omega]_{A_2}$ and $n$. Hence $p_-(\H) < \varrho_\omega < 2$.

\qed

\section{Riesz transform boundedness: Proof of \ref{Point (2)} of Theorem \ref{thm: théorème principal}}\label{s5}

Before sketching the proof of Point~\ref{Point (2)} of Theorem~\ref{thm: théorème principal}, we first recall two key ingredients: the validity of Theorem~\ref{thm: parabolicODEs} in the range of $p$ for which we establish the boundedness of the parabolic Riesz transform, and some preliminary facts concerning the functional calculus. All arguments follow either \cite{BEK26} or \cite{AEbook2023}, up to conjugation by $\omega^{\frac{1}{2}}$ and $\omega^{-\frac{1}{2}}$, and we refer to \cite[Sect. 6 and 7]{BEK26} for details.

Throughout this section, we fix $\varrho_\omega \in (1,2)$ such that the family $\left( \omega^{\frac{1}{2}} \mathcal{E}_\lambda \, \omega^{-\frac{1}{2}} \right)_{\lambda>0}$ is $\L^{\varrho_\omega}-\L^2$ bounded. Such an exponent always exists and depends only on $[\omega]_{A_2}$ and $n$. See Section \ref{s42}.

\subsection{Two Key Tools}

\subsubsection{The Critical Numbers}

We define the following exponent:
\begin{align*}
    q_-(\H) 
    &:= \inf \left\{ p \in( 1,\infty) : \ \text{the family} \ (\omega^{\frac{1}{2}}\lambda \mathbb{D} \E_\lambda \, \omega^{-\frac{1}{2}} )_{\lambda>0} \ \text{is} \ \L^p  \text{-bounded} \ \text{and} \ \mu_{p'} \ \text{is l.f. on } \R^{n+1}  \right\}.
\end{align*}
Recall that $p_-(\H)$, defined in the introduction, is as above with $\E_\lambda$ replacing $\lambda \mathbb{D}\E_\lambda$. 
\begin{prop}\label{prop: p_=q_}
We have $q_-(\H) \le p_-(\H)$.
\end{prop}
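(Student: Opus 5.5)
Fix $p \in (p_-(\H),2]$; we show that $(\omega^{\frac12}\lambda\D\E_\lambda\,\omega^{-\frac12})_{\lambda>0}$ is $\L^p$-bounded. Since $\mu_{p'}$ is l.f. for such $p$ (this condition is built into the definition of $p_-(\H)$, and $\omega^{p'/2}$ is locally integrable for every larger $p$ once it is for some), this gives $q_-(\H)\le p$, and letting $p\downarrow p_-(\H)$ yields the claim. Our strategy is the unweighted one of \cite{BEK26}, conjugated by $\omega^{\pm\frac12}$. We first upgrade $\L^r$-boundedness of the resolvents to $\L^r-\L^2$ boundedness of a power. Then we compose with the $\L^2$ estimates for $\lambda\D\E_\lambda$ and use off-diagonal estimates to pass from $\L^r-\L^2$ to $\L^r$ bounds.

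\emph{Step 1.} Pick $r$ with $p_-(\H)<r<p$. Then $T_\lambda:=\omega^{\frac12}\E_\lambda\omega^{-\frac12}$ is $\L^r$-bounded and $\L^2$-bounded. By Section~\ref{s42} it is also $\L^{\varrho_\omega}-\L^2$ bounded. Lemma~\ref{lem: gives m} with $q=2$ gives an integer $m\ge1$ such that $(T_\lambda^m)$ is $\L^s-\L^2$ bounded for every $s\in(r,2]$, in particular for $s=p$.

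\emph{Step 2.} The family $T_\lambda$ satisfies $\L^2$ off-diagonal estimates (Proposition~\ref{prop: classical ODEs}) and is $\L^r$-bounded. Interpolating with Riesz--Thorin on the truncated operators $\one_F T_\lambda\one_E$ gives $\L^{\tilde p}$ off-diagonal estimates for $\tilde p\in(r,2]$. Interpolating the $\L^{r'}-\L^{2}$ bound for the adjoint powers in the same way, we obtain $\L^{p}-\L^2$ off-diagonal estimates for $T^m_\lambda$, with an exponent slightly adjusted.

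Next, $\omega^{\frac12}\lambda\D\E_\lambda\omega^{-\frac12}$ is $\L^2$-bounded and, for the spatial gradient, satisfies $\L^2$ off-diagonal estimates by Proposition~\ref{prop: classical ODEs}. For the $D_t^{1/2}$ part we combine Propositions~\ref{prop: L2 spatial supports} and \ref{prop: time supports}, which yield only polynomial decay in time, with a factor $N^{-j\varepsilon}$ where $\varepsilon>1$. Writing
\[
\lambda\D\E_\lambda^{m+1}=(\lambda\D\E_\lambda)\,\E_\lambda^m,
\]
the composition rule (Lemma~\ref{lem: Toolbox}) shows that $\omega^{\frac12}\lambda\D\E_\lambda^{m+1}\omega^{-\frac12}$ is $\L^p-\L^2$ bounded, with off-diagonal decay of the form given in Theorem~\ref{thm: parabolicODEs}.

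\emph{Step 3.} We now remove the power $m$ and pass from $\L^p-\L^2$ to $\L^p$. On each parabolic cube we use Hölder's inequality in the form $\|\one_F g\|_p\le|F|^{\frac1p-\frac12}\|\one_F g\|_2$. We sum over the annuli $C^N_j(\Delta_\lambda)$ with $r=\lambda$, which requires that $N^{-j\varepsilon}$ beats the volume growth $N^{j\gamma_{2,p}\cdot(\dots)}$. We then remove the power by the resolvent identity
\[
\E_\lambda=\E_\lambda^{m+1}+\sum_{k=1}^m(\E_\lambda^{k}-\E_\lambda^{k+1}),
\qquad \E_\lambda^k-\E_\lambda^{k+1}=\lambda^2\H\E_\lambda^{k+1}.
\]
Alternatively, we use the standard trick $\E_\lambda=\E^{m}_{\lambda}$ plus terms expressed through $\E_{\lambda}$ at comparable scales.

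The main obstacle is the summation in Step 3 for the $D_t^{1/2}$ component. Its temporal off-diagonal decay is only polynomial, so one must choose $N$ large and check that $\varepsilon=1+\frac1{1+p'}$ exceeds the loss $(n+2)(\frac1p-\frac12)$ measured in the anisotropic annuli. If this fails, we would instead follow \cite[Sect.~6]{BEK26}, bootstrapping downward from $p$ close to $2$ in finitely many steps.
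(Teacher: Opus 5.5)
Your route differs from the paper's and can be completed, but as written it has two genuine gaps. The first is the removal of the power $m$ in Step~3. The identity $\E_\lambda=\E_\lambda^{m+1}+\sum_{k=1}^m(\E^k_\lambda-\E^{k+1}_\lambda)$ is a telescoping tautology. After applying $\lambda\D$, each term $\lambda\D\E^k_\lambda-\lambda\D\E^{k+1}_\lambda$ still contains $\lambda\D\E^k_\lambda$ with $k\le m$; for $k=1$ this is the very operator you want to bound, so nothing is gained. Your ``standard trick'' alternative is not specified. What is needed is Lemma~\ref{lem: sans m}, i.e. the Calder\'on reproducing formula of \cite[Lemma 6.5]{AEbook2023}. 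It applies with $p=q$, since then $\frac{n+2}{p}-\frac{n+2}{q}=0<1$, and this is exactly how the paper removes the power.

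The second gap is that the crux of Step~3 is left open. You flag the summation as the main obstacle and state a criterion ($\varepsilon$ against $(n+2)(\frac1p-\frac12)$) that is not the relevant one; read literally, it fails for $p$ close to $1$. You then fall back on an unspecified bootstrap. The correct bookkeeping is favorable. Use scenario~(ii) of Theorem~\ref{thm: parabolicODEs} for $\lambda\D\E_\lambda$ with $p=\varrho=2$ and $r=\lambda$. Compose it by hand with the exponential $\L^p-\L^2$ off-diagonal estimates of $\omega^{1/2}\E^m_\lambda\omega^{-1/2}$ (Lemma~\ref{lem: Toolbox} only covers exponential decay), splitting the intermediate function over the annuli $C^N_k(\Delta_\lambda)$; terms with $|k-j|\ge2$ carry factors $e^{-c2^{\max(j,k)}}$. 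This gives, for $j\ge2$,
\[
\norm{\one_{\Delta_\lambda}\omega^{1/2}\lambda\D\E^{m+1}_\lambda(\omega^{-1/2}\one_{C^N_j(\Delta_\lambda)}u)}_2\le C_N\lambda^{\gamma_{p,2}}N^{-2j}\norm{\one_{C^N_j(\Delta_\lambda)}u}_p .
\]
H\"older's inequality on $\Delta_\lambda$ cancels $\lambda^{\gamma_{p,2}}$. Summing over a grid of cubes $\Delta_\lambda$, whose annuli $C^N_j$ overlap at most $C_N(2^nN^2)^j$ times, bounds the $\L^p$ norm by $\sum_j C_N N^{-2j}(2^nN^2)^{j/p}$. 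This converges once $N^{2(p-1)}>2^n$, hence for every $p>1$ after choosing $N$ large. Together with Lemma~\ref{lem: sans m}, this gives the proposition in one shot.

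The paper proceeds differently. It only combines the global $\L^p-\L^2$ bound for $\lambda\D\E^{m+1}_\lambda$ (from triangle interpolation with the $\L^{\varrho_\omega}-\L^2$ bound) with the diagonal $\L^s$ annulus-to-cube decay, via Lemma~\ref{prop: MtMx}. Since the decay is only available on the diagonal, this yields $\L^r$-bounds merely for $r\in(q_\theta,s]$ with $\theta>1/q_\theta$, hence the iteration downward from $s=2$. Your localized $\L^p-\L^2$ estimate is the extra input that makes the iteration unnecessary, at the cost of the hand-made composition with polynomial time decay.
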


Let us recall the required tools. The first is the following lemma based on a Calder\'on reproducing formula. Although we only need the case $p=q$, we state it in full generality.
\begin{lem}[{\cite[Lemma 6.5]{AEbook2023}}]\label{lem: sans m}
Let $1\le p \le q <\infty$ with $\frac{n+2}{p}-\frac{n+2}{q}<1$. If there exists an integer $m\ge1$ such that the family $(\omega^{\frac{1}{2}} \lambda \mathbb{D}\mathcal{E}_\lambda^{m+1}\omega^{-\frac{1}{2}})_{\lambda>0}$ is $\L^p-\L^q$ bounded, then the family $(\omega^{\frac{1}{2}} \lambda \mathbb{D}\mathcal{E}_\lambda \omega^{-\frac{1}{2}})_{\lambda>0}$ is also $\L^p-\L^q$ bounded.
\end{lem}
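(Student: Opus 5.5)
The argument rests on a Calder\'on-type reproducing formula that rewrites $\lambda\D\E_\lambda$ as an average over scales of $\D\E^{m+1}$, followed by Minkowski's inequality. Set $T_\lambda:=\omega^{\frac12}\lambda\D\E_\lambda\omega^{-\frac12}$ and $S_\lambda:=\omega^{\frac12}\lambda\D\E^{m+1}_\lambda\omega^{-\frac12}$. By hypothesis
\[
\|S_\lambda u\|_q\lesssim \lambda^{\gamma_{p,q}}\|u\|_p .
\]
Conjugation by $\omega^{\pm\frac12}$ commutes with every algebraic identity in the functional calculus of $\H$ on $\L^2_\mu(\R^{n+1})$. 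So we may work on the level of $\H$ and conjugate at the end, exactly as in \cite[Lemma 6.5]{AEbook2023}.

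\textbf{Step 1: the reproducing formula.} Using the holomorphic functional calculus of the maximal accretive operator $\H$, we write
\[
\E_\lambda=\E_\lambda^{m+1}+\sum_{k=1}^{m}\bigl(\E_\lambda^{k}-\E_\lambda^{k+1}\bigr),\qquad \E^k_\lambda-\E^{k+1}_\lambda=\lambda^2\H\E^{k+1}_\lambda .
\]
The difficulty is the terms $\lambda^2\H\E_\lambda^{k+1}$, which carry no extra resolvent. We resolve them by the integral identity
\[
\lambda^2\H\E^{k+1}_\lambda = c\int_\lambda^\infty \Bigl(\frac{\lambda}{s}\Bigr)^2 \psi(s^2\H)\,\frac{\d s}{s},
\]
or, more simply, through the scale-change identity $\frac{d}{ds}\E_s^{m}=-2m\,s\H\E_s^{m+1}$. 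Integrating from $\lambda$ to $\infty$ then expresses $\E^m_\lambda$ as an integral of $s^2\H\E_s^{m+1}$, and iterating in $m$ we reach
\[
\lambda\D\E_\lambda=\lambda\D\E_\lambda^{m+1}+\int_\lambda^\infty \frac{\lambda}{s}\, s\D\E_s^{m+1}\, g\!\left(\tfrac{\lambda}{s},s^2\H\right)\frac{\d s}{s}.
\]
Here $g$ collects bounded combinations of $\E_s$, $\E_\lambda$ and $s^2\H\E_s$. A cleaner route, the one I would try first, is to differentiate $\lambda\mapsto\E_\lambda$ once and write
\[
\E_\lambda=\E_\lambda^{m+1}+\int_\lambda^{\infty} 2\, s\H\,\E_s^{m+1}\, P_m(\E_s)\,\d s ,
\]
with a polynomial $P_m$. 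This holds strongly on $\L^2_\mu(\R^{n+1})$, since $\E_s\to0$ strongly as $s\to\infty$ by injectivity of $\H$.

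\textbf{Step 2: estimating each piece.} Each integrand has the form $s\D\E_s^{m+1}\cdot R_s$, where $R_s$ is a bounded combination of $\E_s$ and $s^2\H\E_s$ (or of $\E_\lambda$). The factor $\D\E_s^{m+1}$ is placed to the left of all other operators, using commutativity of the functional calculus. The left factor is $\L^p-\L^q$ bounded with norm $\lesssim s^{\gamma_{p,q}}$. For the right factor we need $\L^p$-boundedness of $\E_s$ and of $s^2\H\E_s=1-\E_s$ after conjugation. This is available because, by Lemma~\ref{lem: Toolbox}, $\L^p-\L^q$ bounds for $S_\lambda$ combined with $\L^2$ off-diagonal bounds give the required $\L^p$ bounds for the resolvents in the relevant range. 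In the case $p=q$ actually needed here, this is simply the assumed $\L^p$-boundedness of the resolvents.

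\textbf{Step 3: the scale integral.} Minkowski's inequality gives
\[
\|T_\lambda u\|_q\lesssim \lambda^{\gamma_{p,q}}\|u\|_p+\lambda\int_\lambda^\infty s^{\gamma_{p,q}-1}\,\frac{\d s}{s}\,\|u\|_p .
\]
The integral converges precisely when $\gamma_{p,q}-1<0$, i.e.\ $\frac{n+2}{p}-\frac{n+2}{q}<1$, which is exactly the hypothesis. It then equals $c\,\lambda^{\gamma_{p,q}-1}$, and the sum is $\lesssim\lambda^{\gamma_{p,q}}\|u\|_p$, as required. Density of $\L^p\cap\L^2$ finishes the argument.

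The main obstacle is Step 1: arranging the reproducing formula so that a single factor $\D\E_s^{m+1}$ appears on the left while the remaining factors are uniformly $\L^p$-bounded. One must also check that the power of $\lambda/s$ exactly compensates the loss $s^{\gamma_{p,q}}$.
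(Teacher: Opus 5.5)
There is a genuine gap in Step~1. Your reproducing formulas integrate over the wrong range of scales, and no formula of that type can exist. On the scalar level, as $z\to\infty$ in a sector, $(1+\lambda^2z)^{-1}-(1+\lambda^2z)^{-(m+1)}\sim(\lambda^2z)^{-1}$. By contrast, any integral over $s\ge\lambda$ of $(1+s^2z)^{-(m+1)}$ times uniformly bounded functions of $s^2z$ or $\lambda^2z$, against weights like your $\frac{\lambda}{s}\frac{\d s}{s}$ or $\frac{\d s}{s}$, is $O\bigl((\lambda^2|z|)^{-(m+1)}\bigr)$. Your ``cleaner route'' shows the failure concretely. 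Differentiating gives
\[
\E_\lambda-\E_\lambda^{m+1}=\int_\lambda^\infty 2s\H\E_s^{2}\bigl(1-(m+1)\E_s^{m}\bigr)\d s,
\]
which contains only two resolvent factors, so it is not of the form $2s\H\E_s^{m+1}P_m(\E_s)$ once $m\ge2$. Even for $m=1$, the term $2s\H\E_s^2$ becomes $\frac{2\lambda}{s^2}(s\D\E_s)(s^2\H\E_s)$ after applying $\lambda\D$, so it reproduces the family $s\D\E_s$ you are trying to bound. Writing it instead as $s\D\E_s^{2}R_s$ forces the unbounded factor $s^2\H$ into $R_s$. Hence the structural claim of Step~2 is false.

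Step~3 also misreads the hypothesis. Since $p\le q$, we have $\gamma_{p,q}=(n+2)(\frac1q-\frac1p)\le0$, so $\gamma_{p,q}-1<0$ always holds. The hypothesis $\frac{n+2}{p}-\frac{n+2}{q}<1$ says $\gamma_{p,q}>-1$, which your argument never uses. Finally, the lemma does not assume $\L^p$ bounds on $\omega^{\frac12}\E_\lambda\omega^{-\frac12}$, and Lemma~\ref{lem: Toolbox} does not produce them from bounds on $\lambda\D\E_\lambda^{m+1}$.

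The condition $\gamma_{p,q}>-1$ is exactly integrability of $s^{\gamma_{p,q}}$ at $s=0$. This points to the right tool: a reproducing formula over small scales $s\in(0,\lambda)$ with purely scalar weights. Start from the Beta-integral identity $(1+w)^{-1}=m\int_0^1(1-x)^{m-1}(1+xw)^{-(m+1)}\d x$. Prove it first for $w>0$ by substituting $y=\frac{x(1+w)}{1+xw}$, then extend by analytic continuation. With $x=s^2/\lambda^2$ it gives
\[
\E_\lambda=\int_0^\lambda \frac{2ms}{\lambda^2}\Bigl(1-\frac{s^2}{\lambda^2}\Bigr)^{m-1}\E_s^{m+1}\d s,
\qquad
\lambda\D\E_\lambda=\int_0^\lambda \frac{2m}{\lambda}\Bigl(1-\frac{s^2}{\lambda^2}\Bigr)^{m-1}s\D\E_s^{m+1}\d s.
\]
Passing $\D$ under the integral is justified by closedness of $\D$ and the uniform $\L^2_\mu$ bound for $s\D\E_s$. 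Conjugating by $\omega^{\pm\frac12}$ and using Minkowski's inequality yields
\[
\|\omega^{\frac12}\lambda\D\E_\lambda\omega^{-\frac12}u\|_q\lesssim\frac{2m}{\lambda}\int_0^\lambda s^{\gamma_{p,q}}\d s\,\|u\|_p=\frac{2m}{1+\gamma_{p,q}}\lambda^{\gamma_{p,q}}\|u\|_p .
\]
No bound on the resolvents themselves is required. This small-scale reproducing-formula argument is the one behind \cite[Lemma 6.5]{AEbook2023}, which the paper invokes.
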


The second key ingredient is the following criterion for extrapolating boundedness of operators with limited space-time decay.

\begin{lem}[{\cite[Proposition 5.10]{BEK26}}]\label{prop: MtMx}
Let $1\le p <  s <\infty$. Let $(T_\lambda)_{\lambda>0}$ be a family of operators on  $\L^s(\R^{n+1})$. Assume that for all $N>1$, there exists a constant $C$ such that for all $\lambda>0$, $j\ge 1$, and $u\in \L^p(\R^{n+1})\cap \L^s(\R^{n+1})$, the following two estimates hold:
\begin{equation*}
    \| \one_{\Delta_\lambda} T_\lambda (\one_{C^N_j(\Delta_\lambda)} u)\|_{s} \le C N^{-2j} \| \one_{C^N_j(\Delta_\lambda)} u \|_s,
\end{equation*}
\begin{equation*}
     \| T_\lambda  u\|_s \le C \lambda^{ \gamma_{p,s} } \|u \|_p.
\end{equation*}
For $\theta \in [0,1]$, if $\theta > \frac{1}{q_\theta}$, where $\frac{1}{q_\theta} := \frac{1-\theta}{p} + \frac{\theta}{s}$, then $(T_\lambda)_{\lambda>0}$ is $\L^r$-bounded for all $r \in (q_\theta, s]$.
\end{lem}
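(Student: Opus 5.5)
The approach is to interpolate the two hypotheses on each localized piece $\one_{\Delta_\lambda} T_\lambda \one_{C^N_j(\Delta_\lambda)}$, and then recover global $\L^r$-bounds by tiling $\R^{n+1}$ with parabolic cubes of radius $\lambda$ and summing over the annuli $j$. The free parameter $N$ is chosen large at the end.

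\emph{Step 1: interpolation on a fixed piece.} Fix $r\in(q_\theta,s]$ and write $\frac1r=\frac{1-\theta_r}{p}+\frac{\theta_r}{s}$ with $\theta_r\in(\theta,1]$. The map $\theta\mapsto \theta-\frac{1}{q_\theta}=\theta\bigl(1+\frac1p-\frac1s\bigr)-\frac1p$ is increasing. Hence $\theta_r-\frac1r>\theta-\frac1{q_\theta}>0$, so it suffices to treat $r=q_\theta$ under the hypothesis $\theta>\frac1{q_\theta}$.

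Fix a cube $\Delta=\Delta_\lambda$ and $j\ge1$, and consider the linear operator $S:=\one_{\Delta} T_\lambda \one_{C^N_j(\Delta)}$. It has two bounds:
\begin{itemize}
\item from the first hypothesis, $S$ is bounded $\L^s\to\L^s$ with norm $CN^{-2j}$;
\item from the second hypothesis, since multiplication by indicators is a contraction, $S$ is bounded $\L^p\to\L^s$ with norm $C\lambda^{\gamma_{p,s}}$.
\end{itemize}
Riesz--Thorin (applied on the dense class $\L^p\cap\L^s$) gives that $S$ is bounded $\L^{q_\theta}\to\L^s$ with norm
\begin{equation*}
C N^{-2j\theta}\lambda^{(1-\theta)\gamma_{p,s}}=CN^{-2j\theta}\lambda^{\gamma_{q_\theta,s}}.
\end{equation*}
Next, Hölder on $\Delta$ together with $|\Delta|\simeq\lambda^{n+2}$ gives $\|\one_\Delta v\|_{q_\theta}\lesssim \lambda^{-\gamma_{q_\theta,s}}\|\one_\Delta v\|_s$. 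The powers of $\lambda$ cancel, and we obtain
\begin{equation*}
\|\one_\Delta T_\lambda(\one_{C^N_j(\Delta)}u)\|_{q_\theta}\lesssim N^{-2j\theta}\|\one_{C^N_j(\Delta)}u\|_{q_\theta}.
\end{equation*}

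\emph{Step 2: tiling and summation.} Cover $\R^{n+1}$ by a grid $\{\Delta\}$ of parabolic cubes of radius $\lambda$ with bounded overlap. Since $\bigcup_j C^N_j(\Delta)=\R^{n+1}$, we may write $u=\sum_j\one_{C^N_j(\Delta)}u$ for each $\Delta$. Summing the $q_\theta$-th powers over the grid and applying Minkowski's inequality in $\ell^{q_\theta}$ yields
\begin{equation*}
\|T_\lambda u\|_{q_\theta}\lesssim \sum_{j\ge1}N^{-2j\theta}\Bigl(\sum_\Delta\|\one_{C^N_j(\Delta)}u\|_{q_\theta}^{q_\theta}\Bigr)^{1/q_\theta}.
\end{equation*}

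Now count overlaps. Each point lies in $C^N_j(\Delta)$ for at most $\lesssim 2^{jn}N^{2j}$ grid cubes, which is of order $|2^{j+1}Q_\lambda\times N^{j+1}I_\lambda|/|\Delta|$ up to $N$-independent factors. Consequently the inner sum is at most $(c\,2^{jn}N^{2j})^{1/q_\theta}\|u\|_{q_\theta}$. The resulting series is
\begin{equation*}
\sum_j 2^{jn/q_\theta}N^{-2j(\theta-1/q_\theta)}.
\end{equation*}
Since $\theta>1/q_\theta$, we may choose $N$ so large that $2^{n/q_\theta}N^{-2(\theta-1/q_\theta)}<1$, and the series converges. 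This is allowed because the hypotheses hold for every $N$, with constants depending on $N$. This gives uniform $\L^{q_\theta}$-bounds, and by Step 1 the same argument gives $\L^r$-bounds for every $r\in(q_\theta,s]$.

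\emph{Main obstacle.} The delicate point is the bookkeeping in Step 2. The counting factor grows like $N^{2j/q_\theta}$, which is exactly what the interpolated decay $N^{-2j\theta}$ must beat; this is where the condition $\theta>1/q_\theta$ is used. The extra spatial factor $2^{jn/q_\theta}$ is then absorbed by the freedom to take $N$ large. A second point to check is that the constant in the $j=1$ piece (where $C^N_1$ contains $\Delta$) is harmless: it only adds a finite $N$-dependent term. Finally, Riesz--Thorin is applied only to the truncated operators on $\L^p\cap\L^s$, so no a priori $\L^{q_\theta}$-boundedness of $T_\lambda$ is needed.
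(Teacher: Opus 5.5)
Your proof is correct, but it takes a different route from the one the paper relies on. The paper gives no argument here; it cites \cite[Proposition~5.10]{BEK26}. The open endpoint at $q_\theta$ suggests that the cited proof goes through the iterated maximal function $M_tM_x$. In that argument one performs the same interpolation of the two hypotheses on each piece. One then bounds the local average $\bigl(|\Delta|^{-1}\int_\Delta |T_\lambda u|^s\bigr)^{1/s}$ by $\sum_j N^{-2j\theta}(2^{jn}N^{2j})^{1/q_\theta}$ times $\inf_\Delta (M_tM_x|u|^{q_\theta})^{1/q_\theta}$. An iterated maximal function is needed because the sets $2^{j+1}Q_\lambda\times N^{j+1}I_\lambda$ are anisotropic rectangles, not parabolic cubes. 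The conclusion then comes from the $\L^{r/q_\theta}$-boundedness of $M_tM_x$, which forces $r>q_\theta$.

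You replace the maximal function by direct summation over a tiling, using Minkowski's inequality in $\ell^{q_\theta}$ and an overlap count for $\sum_\Delta \one_{C^N_j(\Delta)}$. This is the same mechanism as the proof that off-diagonal estimates imply boundedness. It is more elementary, and it even gives the endpoint $r=q_\theta$, which the lemma as stated excludes.

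What the maximal-function route buys is a single domination, valid for one fixed $N$, from which all $r\in(q_\theta,s]$ follow at once. You instead redo the interpolation and the choice of $N$ for each $r$ through $\theta_r$. That is legitimate, since the hypotheses hold for every $N$.

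One small inaccuracy: the number of grid cubes $\Delta$ with a given point in $C^N_j(\Delta)$ is of order $2^{(j+1)n}N^{2(j+1)}$. Compared with $2^{jn}N^{2j}$, this carries an extra factor of order $2^nN^2$, which is not $N$-independent as you claim. It is harmless, because the factor does not depend on $j$ and $N$ is fixed before summing.
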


Combining Lemma~\ref{prop: MtMx} with (ii) of Theorem~\ref{thm: parabolicODEs}, and then Lemma~\ref{lem: sans m}, we obtain Proposition~\ref{prop: p_=q_} via an iteration argument starting from $2$, as in Step 2 of the proof of~\cite[Theorem 6.1]{BEK26}. The only difference is that the operators are conjugated by $\omega^{\frac{1}{2}}$ and that $\L^{\varrho_\omega}-\L^2$ boundedness is used instead of $\L^{2\star}-\L^2$ boundedness therein. We omit the details.

\subsubsection{The functional calculus}

By Theorem~\ref{thm: théorie L2}, the operator $\H$ is maximal accretive and thus admits a holomorphic functional calculus (see \cite{McIntosh86, haase2006functional, egert2024harmonic}). 

\begin{lem}[Functional calculus]\label{lem: funct calculus}
Let $p\in[1,2]$ and assume that the family $(\omega^{\frac{1}{2}}\,\mathcal{E}_\lambda \, \omega^{-\frac{1}{2}})_{\lambda>0}$ is $\L^p$-bounded.
Fix $\alpha >0$ and $\beta \ge 0$ and define the following holomorphic functions on $\mathbb{C}\setminus\{-1\}$ by:
\begin{align*}
\psi(z):= z^{3\alpha}(1+z)^{-6\alpha}, \quad \varphi(z):= (1-(1+z)^{-\beta})^{3\alpha}.
\end{align*}
Then, for all $q\in(p,2]$, all $\lambda,r>0$, all $u\in \L^q(\R^{n+1}) \cap \L^2(\R^{n+1})$, and all measurable sets $E,F\subseteq\R^{n+1}$, the following estimates hold with a constant $C=C(\alpha,\beta,p,q)$:
\begin{enumerate}[(i)]
\item $\norm{\one_F \omega^{\frac{1}{2}} \, \psi(\lambda^2 \H) \varphi(r^2\H)  \left( \omega^{-\frac{1}{2}} \one_E u \right)}_q \lesssim_q C \Bigl(1 + \frac{\d(E,F)}{\min(\lambda,r)}\Bigr)^{-6\alpha} \norm{\one_E u}_q$,
\item $\norm{\omega^{\frac{1}{2}} \, \psi(\lambda^2 \H) \varphi(r^2\H)  \left( \omega^{-\frac{1}{2}} u \right)}_q \lesssim_q C \min\left(1, \, \left(\frac{r}{\lambda}\right)^{2\alpha} \right) \norm{u}_q$.
\end{enumerate} 
\end{lem}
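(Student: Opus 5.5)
The plan is to reduce both estimates to the resolvent family through the representation $\psi(\lambda^2\H)\varphi(r^2\H)$ as a combination of powers of resolvents. First, by interpolation between the assumed $\L^p$-boundedness of $(\omega^{\frac12}\E_\lambda\omega^{-\frac12})_{\lambda>0}$ and the $\L^2$ off-diagonal estimates of Proposition~\ref{prop: classical ODEs}, I obtain $\L^q$ off-diagonal estimates with exponential decay for every $q\in(p,2]$. This is the usual Riesz--Thorin argument applied to $\one_F\,\omega^{\frac12}\E_\lambda\omega^{-\frac12}\one_E$, which is responsible for the notation $\lesssim_q$. By composition (Lemma~\ref{lem: Toolbox}), the same estimates then hold for $\omega^{\frac12}\E_\lambda^k\omega^{-\frac12}$, uniformly for $k$ in bounded ranges.

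Next I would write $\psi(\lambda^2\H)\varphi(r^2\H)$ through the Cauchy integral on the boundary of a sector $\Sigma_\theta$ with $\theta<\pi/2$, valid since $\psi\varphi$ has decay $|z|^{3\alpha}$ at $0$ and $|z|^{-3\alpha}$ at infinity. Each resolvent $(z-\H)^{-1}$ is equal to $-\lambda^2 w^{-1}... $; more precisely, for $z=\lambda^{-2}\zeta$ the resolvent $(\zeta-\lambda^2\H)^{-1}$ with $\zeta$ on the ray is a rotated resolvent. For this I need off-diagonal bounds for $(1+\mu^2 e^{i\phi}\H)^{-1}$ with $|\phi|<\pi/2-\theta$. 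Proposition~\ref{prop: classical ODEs} with complex parameters should follow from the same energy/Davies argument, since maximal accretivity gives sectoriality of angle $\pi/2$; on $\L^p$, however, only real $\lambda$ is assumed. Rather than passing to complex parameters, I would use the $\E_\lambda$ family directly: the exponents $3\alpha,6\alpha,\beta$ are arbitrary positive reals, so I would use the real-variable subordination formula
\[
(1+z)^{-s}=\Gamma(s)^{-1}\int_0^\infty t^{s-1}e^{-t}e^{-tz}\,dt .
\]
That still requires the semigroup, which may not be available. The cleanest route is to expand $(1-(1+z)^{-\beta})^{3\alpha}$ and $z^{3\alpha}(1+z)^{-6\alpha}$ into convergent Stieltjes-type integrals of real resolvents, using
\[
z^{a}(1+z)^{-b}=c\int_0^\infty \tau^{a-1}\cdots(1+\tau^{-1}z)^{-1}\,d\tau
\]
for $0<a<1$, combined with finite products for larger exponents. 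The result is a representation
\[
\psi(\lambda^2\H)\varphi(r^2\H)=\int_0^\infty m(s)\,\E_s^{k}\,ds
\]
(or a finite sum of such terms) with explicit kernels $m$ depending on $\lambda,r$.

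Given such a representation, (i) follows by integrating the exponential off-diagonal bound $e^{-c\,\d(E,F)/s}$ against $|m(s)|$. The factor $\bigl(1+\d(E,F)/\min(\lambda,r)\bigr)^{-6\alpha}$ arises because $m$ concentrates on scales $s\sim\lambda$ and $s\sim r$, with polynomial tails of order $6\alpha$ coming from the behaviour of $\psi$ and $\varphi$ at $0$ and infinity. Estimate (ii) follows by taking $E=F=\R^{n+1}$, together with the elementary bound $\int|m|\lesssim\min(1,(r/\lambda)^{2\alpha})$. This bound reflects $|\psi(\lambda^2 z)\varphi(r^2 z)|\lesssim \min(1,|\lambda^2 z|^{-3\alpha}|r^2z|^{3\alpha})$, which follows from $|\varphi(w)|\lesssim|w|^{3\alpha}$ for small $w$ and $|\psi(w)|\lesssim|w|^{-3\alpha}$ for large $w$.

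The main obstacle is producing this real-resolvent representation with integrable kernels for non-integer exponents, while keeping track of the decay in both $\lambda$ and $r$. If the real representation becomes unwieldy, I would instead follow \cite{AEbook2023}: establish $\L^q$ off-diagonal estimates for complex resolvents $(1+\zeta\H)^{-1}$ with $|\arg\zeta|$ small, by the same interpolation (the $\L^2$ bounds hold on a sector via the Davies trick, and $\L^p$-boundedness extends to a smaller sector by analyticity and Stein interpolation), and then apply the standard Cauchy-integral estimate as in \cite[Sect. 7]{BEK26}, conjugated by $\omega^{\pm\frac12}$.
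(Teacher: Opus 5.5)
\textbf{Your fallback is correct and is essentially the paper's route.} The paper does not prove this lemma in the text; it defers to \cite[Lemma~7.1 and Sect.~7]{BEK26}. The steps are: resolvent bounds on a small sector of complex parameters, then $\L^q$ off-diagonal estimates there, then the Cauchy integral for $\psi(\lambda^2\H)\varphi(r^2\H)$.

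The genuine difference is how you leave the real axis. The paper uses a Neumann series: for $\zeta=\lambda^2e^{i\phi}$,
\[
(1+\zeta\H)^{-1}=e^{-i\phi}\sum_{k\ge 0}(1-e^{-i\phi})^k\,\E_\lambda^{k+1}.
\]
After conjugation by $\omega^{\pm\frac12}$, this converges on $\L^p$ as soon as $|1-e^{-i\phi}|\,\sup_{\lambda>0}\|\omega^{\frac12}\E_\lambda\omega^{-\frac12}\|_{p\to p}<1$. This gives bounds at the exponent $p$ itself, on a sector independent of $q$. Off-diagonal estimates on a smaller sector then follow by interpolation.

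Your Stein interpolation between the real axis ($\L^p$) and a ray ($\L^2$) is also valid. It only yields $\L^q$ bounds for $q\in(p,2)$, on sectors whose opening shrinks as $q\downarrow p$; strictly speaking, that is not "$\L^p$-boundedness on a smaller sector". This suffices for the lemma as stated, since $q>p$ and $C$ may depend on $q$. However, it cannot give the endpoint case $q=1$ that the paper uses in Step~4 of the real-coefficient proof; there the paper relies on sectorial Gaussian bounds or the Neumann series.

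Also correct the contour geometry. $\H$ is only sectorial of angle $\pi/2$ on $\L^2_\mu$, and on $\L^q$ you control $(1+\zeta\H)^{-1}$ only for $|\arg\zeta|<\varepsilon$. So the contour must be $\partial S^+_\theta$ with $\theta\in(\pi-\varepsilon,\pi)$, not $\theta<\pi/2$.

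\textbf{Your primary, real-resolvent route is not a proof as written.} The representation is elided, and the two quantitative claims do not follow from "$m$ concentrates at scales $\lambda$ and $r$". Bounding by $\int|m|$ against uniformly bounded resolvents only gives $O(1)$. The factor $(r/\lambda)^{2\alpha}$ in (ii), and the decay at scale $\min(\lambda,r)$ in (i), both come from cancellation between $\psi(\lambda^2\cdot)$ and $\varphi(r^2\cdot)$. That cancellation is the entire difficulty, not an elementary bound.

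If $3\alpha$ and $\beta$ are integers (as "holomorphic on $\C\setminus\{-1\}$" suggests), your idea does become a complete, purely real-variable alternative. Then $\psi(\lambda^2\H)=\bigl((1-\E_\lambda)\E_\lambda\bigr)^{3\alpha}$ and $\varphi(r^2\H)=(1-\E_r^\beta)^{3\alpha}$ are polynomials in real resolvents. The gains come from trading factors via
\[
\lambda^2\H\,\E_r=(\lambda/r)^2(1-\E_r) \quad\text{or}\quad r^2\H\,\E_\lambda=(r/\lambda)^2(1-\E_\lambda),
\]
combined with composition of off-diagonal estimates. For non-integer exponents you would still have to construct the representation, and the Cauchy integral does that for you.
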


\subsection{Proof of Point \ref{Point (2)} of Theorem~\ref{thm: théorème principal}}
The extrapolation tool here is the following two-scale Blunck--Kunstmann theorem. In the homogeneous-scale case, this is due to \cite{blunck2003calderon} (see also \cite{auscher2007necessary}). The present two-scale version is a special case of \cite[Theorem 5.1]{BEK26}.

\begin{thm}[Two--scale Blunck--Kunstmann extrapolation]\label{thm: BK}
Let $1 \le p < q < \infty$. Let $T : \L^q(\R^{n+1})^{m_1} \to \L^q(\R^{n+1})^{m_2}$ be a bounded sublinear operator, where $m_1, m_2 \in \mathbb{N}^\star$. Let $(A_r)_{r>0}$ be a family of bounded linear operators on $\L^q(\R^{n+1})^{m_1}$. Let $N>1$. Assume that there exists a sequence $(g(j))_{j \ge 1} \in (0,\infty)^{\mathbb{N}^\star}$ such that for all $j \ge 2$
\begin{align}\label{eq: BK1}
\norm{\one_{C^N_j(\Delta_r)} T(1-A_{r}) (\one_{\Delta_r} u)}_q \le g(j) r^{\gamma_{p,q}} \|\one_{\Delta_r} u \|_p	
\end{align}
and for all $j \ge 1$
\begin{align}\label{eq: BK2}
\norm{\one_{C^N_j(\Delta_r)}A_{r} (\one_{\Delta_r} u)}_q \le g(j) r^{\gamma_{p,q}} \|\one_{\Delta_r} u \|_p	
\end{align}
for all parabolic cube $\Delta_r$ with radius $r>0$ and all $u\in \L^q(\R^{n+1})^{m_1}$. If
\begin{equation*}
    \Sigma:=\sum_{j=1}^{+\infty} g(j) \left ( 2^n N^2  \right )^{\frac{j}{q'}}<\infty,
\end{equation*}
then $T$ is of weak-type $(p,p)$, with a bound depending only on $\norm{T}_{\mathcal{L}(\L^q)}$, $\Sigma$, $N$, $n$ and $q$. In particular, $T: \L^r(\R^{n+1})^{m_1} \rightarrow  \L^r(\R^{n+1})^{m_2}$ is  of strong type $(r,r)$, for all $r\in (p,q]$.
\end{thm}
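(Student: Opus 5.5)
The plan is the classical Calder\'on--Zygmund argument of Blunck--Kunstmann (in the form of \cite{auscher2007necessary}). It is run with parabolic cubes for the decomposition and with the anisotropic annuli $C^N_j(\Delta_r)$ for the off-diagonal bookkeeping. Once weak type $(p,p)$ is established, strong type $(r,r)$ for $r\in(p,q]$ follows from Marcinkiewicz interpolation with the assumed $\L^q$ bound on $T$, so everything reduces to the weak-type estimate. Two facts drive the bookkeeping. First, $|\Delta_r|\simeq r^{n+2}$, so that $r^{\gamma_{p,q}}\simeq |\Delta_r|^{\frac1q-\frac1p}$. Second, $|C^N_j(\Delta_r)|\lesssim (2^nN^2)^{j}|\Delta_r|$, and this produces exactly the factor $(2^nN^2)^{j/q'}$ in $\Sigma$.

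\textbf{Decomposition.} Fix $\alpha>0$ and $u\in \L^p\cap\L^q$. I would perform a Calder\'on--Zygmund decomposition at height $\alpha$ for $|u|^p$ with respect to dyadic parabolic cubes; Lebesgue measure is doubling for the parabolic metric. This gives $u=g+\sum_i b_i$ with the following properties:
\begin{itemize}
\item $|g|\lesssim\alpha$ and $\|g\|_p\lesssim\|u\|_p$, hence $\|g\|_q^q\lesssim \alpha^{q-p}\|u\|_p^p$;
\item each $b_i$ is supported in a parabolic cube $\Delta_i$ of radius $r_i$, with $\|b_i\|_p^p\lesssim\alpha^p|\Delta_i|$;
\item the cubes have bounded overlap and $\sum_i|\Delta_i|\lesssim\alpha^{-p}\|u\|_p^p$.
\end{itemize}
By sublinearity,
\[
|Tu|\le |Tg|+\Big|T\Big(\sum_i A_{r_i}b_i\Big)\Big|+\sum_i |T(1-A_{r_i})b_i| .
\]
The term $Tg$ is handled by Chebyshev's inequality and the $\L^q$ bound on $T$.

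\textbf{The $A_{r_i}$ term.} Using $\|T\|_{\mathcal L(\L^q)}$, it suffices to show
\[
\Big\|\sum_i A_{r_i}b_i\Big\|_q\lesssim \Sigma\,\alpha\Big(\sum_i|\Delta_i|\Big)^{1/q}.
\]
I would prove this by duality. Test against $h\in\L^{q'}$, split $\R^{n+1}=\bigcup_j C^N_j(\Delta_i)$, and apply \eqref{eq: BK2}. Combined with the two measure facts above, this gives
\[
|\langle A_{r_i}b_i,h\rangle|\lesssim\alpha|\Delta_i|\sum_j g(j)(2^nN^2)^{j/q'}\Big(\frac{1}{|\widetilde C_j(\Delta_i)|}\int_{\widetilde C_j(\Delta_i)}|h|^{q'}\Big)^{1/q'},
\]
where $\widetilde C_j(\Delta_i)=2^{j+1}Q_{r_i}\times N^{j+1}I_{r_i}\supset\Delta_i$. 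Bounding each average by $\inf_{\Delta_i}\mathcal M(|h|^{q'})^{1/q'}$ and summing in $i$ (bounded overlap) yields
\[
\Big|\Big\langle \sum_i A_{r_i}b_i,h\Big\rangle\Big|\lesssim\alpha\,\Sigma\int_{\bigcup_i\Delta_i}\mathcal M(|h|^{q'})^{1/q'} .
\]
Kolmogorov's inequality together with weak $(1,1)$ of $\mathcal M$ bounds the right-hand side by $\alpha\,\Sigma\,|\bigcup_i\Delta_i|^{1/q}\|h\|_{q'}$.

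\textbf{The $(1-A_{r_i})$ term.} Set $E^\ast:=\bigcup_i C^N_1(\Delta_i)$, whose measure is at most $2^nN^2\sum_i|\Delta_i|$, which is acceptable. Off $E^\ast$, only the annuli with $j\ge2$ contribute. I would again estimate
\[
\Big\|\one_{(E^\ast)^c}\sum_i|T(1-A_{r_i})b_i|\Big\|_q
\]
by duality: \eqref{eq: BK1} gives, annulus by annulus, exactly the same estimate as in the $A_{r_i}$ step. Chebyshev's inequality then gives a bound $\lesssim\alpha^{-p}\|u\|_p^p$ for the measure of the set where this term exceeds $\alpha$. Collecting the three pieces gives weak type $(p,p)$ with constants depending only on $\|T\|_{\mathcal L(\L^q)}$, $\Sigma$, $N$, $n$ and $q$.

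\textbf{Main obstacle.} The critical point is the maximal function $\mathcal M$. The boxes $2^{j}Q_r\times N^{j}I_r$ have eccentricity that changes with $j$ unless $N=2$, so they are not parabolic cubes. Their maximal operator is then of strong (product) type, which is not of weak type $(1,1)$, and this breaks the Kolmogorov step. My first attempt would be to avoid uniform maximal control altogether. For each fixed $j$ the family $\{\widetilde C_j(\Delta_i)\}_i$ consists of boxes that are doubling with respect to a single $j$-dependent quasi-metric $\max(2^{-j}|x|,N^{-j}\sqrt{|t|})$ after rescaling. I would therefore use, for each $j$, the Hardy--Littlewood maximal operator $\mathcal M_j$ of that quasi-metric. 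Its weak $(1,1)$ constant depends only on the doubling constant, hence is uniform in $j$ thanks to the fixed dyadic scaling of the space and time factors. I would then apply Kolmogorov's inequality for each $j$ separately before summing in $j$ against $g(j)(2^nN^2)^{j/q'}$. If the uniformity in $j$ fails, the fallback is to absorb a polynomial loss in $j$ into the geometric factor, at the cost of a slightly stronger summability condition than $\Sigma<\infty$.
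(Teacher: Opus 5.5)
Your proposal is correct and follows the argument the paper relies on: the paper gives no proof of its own but quotes \cite[Theorem~5.1]{BEK26}, which is this Blunck--Kunstmann Calder\'on--Zygmund scheme with annular bookkeeping, and your per-$j$ maximal operators $\mathcal M_j$ handle the anisotropic boxes $2^{j+1}Q_{r}\times N^{j+1}I_{r}$ while leaving the summability condition $\Sigma<\infty$ unchanged. The uniformity in $j$ that you hedge on is immediate: with $\Phi_j(x,t)=(2^jx,N^{2j}t)$ one has $\mathcal M_j f=\mathcal M_0(f\circ\Phi_j)\circ\Phi_j^{-1}$, where $\mathcal M_0$ is the parabolic maximal operator, so $\mathcal M_j$ has exactly the weak $(1,1)$ constant of $\mathcal M_0$ and no fallback is needed (your one slip, $|C^N_1(\Delta_r)|=(2^nN^2)^2|\Delta_r|$ rather than $2^nN^2|\Delta_r|$, only changes a constant).
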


\begin{proof}[Proof of Point \ref{Point (2)} of Theorem~\ref{thm: théorème principal}]
With this criterion, (i) and (i) bis of Theorem~\ref{thm: parabolicODEs} and Lemma~\ref{lem: funct calculus}, the proof of Point \ref{Point (2)} of Theorem~\ref{thm: théorème principal} follows exactly as in \cite[Theorem 7.3]{BEK26}, by conjugation with $\omega^{\frac{1}{2}}$ and $\omega^{-\frac{1}{2}}$ and using $\L^{\varrho_\omega}-\L^2$ boundedness instead of $\L^{2^\star}-\L^2$ boundedness therein. More precisely, one proves that if $\omega^{\frac{1}{2}} \mathcal{R}_\H \omega^{-\frac{1}{2}}$ is $\L^q$-bounded, then Theorem~\ref{thm: BK} applies with $T=\omega^{\frac{1}{2}} \mathcal{R}_\H \omega^{-\frac{1}{2}}$ by using the same Calder\'on reproducing formula as in \cite[Theorem 7.3]{BEK26} and the same operators $A_r:=\omega^{\frac{1}{2}} \,(1-\varphi(r^2\H)) \, \omega^{-\frac{1}{2}}$ as therein, conjugated by $\omega^{\frac{1}{2}}$ and $\omega^{-\frac{1}{2}}$. This yields the $\L^p$-boundedness of $\omega^{\frac{1}{2}} \mathcal{R}_\H \omega^{-\frac{1}{2}}$ for all $p \in (q_\star, q] \cap (p_-(\mathcal{H}), 2]$, where $q_\star$ is the lower parabolic Sobolev conjugate of $q$, defined by
\begin{equation*}
    \frac{1}{q_\star} - \frac{1}{q} = \frac{1}{n+2}.
\end{equation*}
We then iterate the argument, starting from $q=2$ since $\omega^{\frac{1}{2}} \mathcal{R}_\H \omega^{-\frac{1}{2}}$ is $\L^2$-bounded, and deduce that for all $p \in (p_-(\mathcal{H}),2]$, it is $\L^p$-bounded, which is equivalent to the fact that $\mathcal{R}_\mathcal{H}$ extends to a bounded operator on $\L^p_{\mu_p}(\R^{n+1})$. We skip the details.
\end{proof}

\section{Real-valued coefficients: Proof of \ref{Point (3)} of Theorem \ref{thm: théorème principal}}\label{s6}

In this section, we assume that the matrix-valued function $A$ has real-valued coefficients. The key point here is the availability of Gaussian bounds, which we state next. For $x\in\R^n$ and $r>0$, we set
\begin{equation*}
    \omega_r(x):=\omega(B(x,\sqrt{r})):=\int_{B(x,\sqrt{r})}\omega(y)  \dd y,
\end{equation*}
where $B(x,r)$ is the Euclidean ball of radius $r$ and center $x$.

\begin{lem}\label{lem: bornes gaussiennes} 
Fix $\lambda\in\C$ with $\mathrm{Re}(\lambda^{-2})>0$ and $m\geq1$. Then the resolvent $\mathcal{E}_\lambda^m$ is an integral operator on $\L^2_\mu(\R^{n+1})$ with kernel $K_\lambda^m$ satisfying pointwise estimates in the following cases.
\begin{enumerate}[label=(\arabic*), leftmargin=*, itemsep=0pt]
    \item \textbf{Upper and lower Gaussian bounds on $\R_+\setminus\{0\}$:} If $\lambda>0$, then
    \begin{equation*}
    \frac{1}{C}\mathbf{1}_{t>s} \frac{e^{-\frac{t-s}{\lambda^2}}}{\lambda^{2m}}(t-s)^{(m-1)}\frac{e^{-c\frac{|x-y|^2}{t-s}}}{\omega_{t-s}(y)}\le K^m_\lambda(x,t;y,s)\le C \, \mathbf{1}_{t>s} \frac{e^{-\frac{t-s}{\lambda^2}}}{\lambda^{2m}}(t-s)^{(m-1)}\frac{e^{-\frac{1}{c}\frac{|x-y|^2}{t-s}}}{\omega_{t-s}(y)},
    \end{equation*}
    where $C>1$ and $c>1$ are constants depending only on $M$, $\nu$, $[\omega]_{A_2}$, $n$, and $m$.
    \item \textbf{Upper Gaussian bounds on sectors:} Fix $\mu\in(0,\nicefrac{\pi}{4})$ and set $S^+_{\mu}:=\{z\in\C\setminus\{0\}:|\arg(z)|<\mu\}$. If $\lambda\in S^+_{\mu}$, then
    \begin{equation*}
    |K^m_\lambda(x,t;y,s)|\le C \, \mathbf{1}_{t>s} \frac{e^{-c_\mu\frac{t-s}{|\lambda|^2}}}{|\lambda|^{2m}}(t-s)^{(m-1)}\frac{e^{-\frac{1}{c}\frac{|x-y|^2}{t-s}}}{\omega_{t-s}(y)},
    \end{equation*}
    where $C$ and $c$ are as in (1), and $c_\mu\in(0,1]$ depends only on $\mu$.
\end{enumerate} 
\end{lem}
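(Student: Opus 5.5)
The plan is to build the kernel of $\mathcal{E}^m_\lambda$ from the fundamental solution $\Gamma(x,t;y,s)$ of $\H$. For real coefficients, $\Gamma$ obeys two-sided Gaussian bounds of the form
\begin{equation*}
\frac{1}{C}\,\mathbf{1}_{t>s}\,\frac{e^{-c\frac{|x-y|^2}{t-s}}}{\omega_{t-s}(y)}\le \Gamma(x,t;y,s)\le C\,\mathbf{1}_{t>s}\,\frac{e^{-\frac{1}{c}\frac{|x-y|^2}{t-s}}}{\omega_{t-s}(y)},
\end{equation*}
with respect to the measure $\omega(y)\,\mathrm{d}y\,\mathrm{d}s$. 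These bounds come from the degenerate Moser/Nash theory of Chiarenza--Serapioni and Fabes--Kenig--Serapioni, in the form used by Ataei--Egert--Nystr\"om for degenerate parabolic equations with $A_2$ weights (the Aronson-type theory in the weighted setting). I would quote them rather than reprove them. The constants depend only on $M,\nu,[\omega]_{A_2},n$, with $\omega_{t-s}(y)=\omega(B(y,\sqrt{t-s}))$ as the volume factor.

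First I identify $\mathcal{E}^m_\lambda$ through a Laplace transform in time. Write $\H=\partial_t+L(t)$ and let $\Gamma$ be the kernel of the evolution family. Since $\H$ generates the translated evolution semigroup $(e^{-\tau\H}u)(t)=\Gamma(t,t-\tau)u(t-\tau)$, the formula
\begin{equation*}
(1+\lambda^2\H)^{-m}=\frac{\lambda^{-2m}}{(m-1)!}\int_0^\infty \tau^{m-1}e^{-\tau/\lambda^2}e^{-\tau\H}\,\mathrm{d}\tau
\end{equation*}
gives, after the substitution $\tau=t-s$, the kernel
\begin{equation*}
K^m_\lambda(x,t;y,s)=\mathbf{1}_{t>s}\,\frac{(t-s)^{m-1}}{(m-1)!\,\lambda^{2m}}\,e^{-\frac{t-s}{\lambda^2}}\,\Gamma(x,t;y,s).
\end{equation*}
Two things have to be justified here:
\begin{enumerate}[label=(\alph*)]
\item the evolution semigroup is a contraction $C_0$-semigroup on $\L^2_\mu$ with generator the maximal accretive $\H$ of Theorem~\ref{thm: théorie L2}. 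I would check this by verifying that the right-hand side solves the variational equation defining $(1+\lambda^2\H)^{-m}$ on $\mathrm{E}_\mu$, then using uniqueness (Lax--Milgram with the hidden coercivity of $1+\lambda^2\H$).
\item the formula is valid for complex $\lambda$ with $\mathrm{Re}(\lambda^{-2})>0$. The integral converges absolutely there, and both sides are holomorphic in $\lambda^{-2}$ on the right half-plane, so analytic continuation from $\lambda>0$ settles it.
\end{enumerate}

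Plugging the two-sided Gaussian bounds for $\Gamma$ into this kernel formula gives Point~(1) immediately, with $m$-dependence only through $(m-1)!$. For Point~(2), take $\lambda\in S^+_\mu$ with $\mu<\pi/4$. Then $\arg(\lambda^{-2})\in(-2\mu,2\mu)$, so $\mathrm{Re}(\lambda^{-2})=|\lambda|^{-2}\cos(\arg\lambda^{-2})\ge \cos(2\mu)|\lambda|^{-2}$. This gives $|e^{-(t-s)/\lambda^2}|\le e^{-c_\mu (t-s)/|\lambda|^2}$ with $c_\mu=\cos 2\mu\in(0,1]$. Combined with $|\lambda^{-2m}|=|\lambda|^{-2m}$ and the upper bound on $\Gamma$, this yields the claim.

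The main obstacle is the weighted Gaussian bounds for $\Gamma$ with $t$-dependent measurable coefficients, and in particular the measure normalisation. One has to check that the kernel of the operator on $\L^2_\mu$, taken against $\omega(y)\,\mathrm{d}y\,\mathrm{d}s$, matches the weighted heat-kernel normalisation $1/\omega(B(y,\sqrt{t-s}))$ rather than $|B|^{-1}$. If a direct reference does not apply, I would derive the upper bound by Davies' perturbation, i.e.\ $\L^2_\mu$ energy estimates for $e^{\phi}\Gamma e^{-\phi}$, combined with weighted local boundedness (Moser iteration using the Fabes--Kenig--Serapioni weighted Sobolev inequality). The lower bound would follow from the weighted parabolic Harnack inequality through the standard chaining argument.
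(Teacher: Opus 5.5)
Your proposal is correct. For $m=1$ it is essentially the paper's argument: the paper defines $v=\int_{-\infty}^t\lambda^{-2}e^{-(t-s)/\lambda^2}\,\Gamma(t,s)f(\cdot,s)\,\mathrm{d}s$, checks that $v\in\L^2_\mu$, $\nabla_x v\in\L^2_\mu$ and that $v$ solves the resolvent equation, and then gets $v=\mathcal{E}_\lambda f$ by uniqueness (energy identity plus $\mathrm{Re}(\lambda^{-2})>0$), which is your step (a). This works directly for complex $\lambda$. In your route no analytic continuation is needed either, since $(z+\mathcal{H})^{-1}=\int_0^\infty e^{-z\tau}e^{-\tau\mathcal{H}}\,\mathrm{d}\tau$ holds for every $\mathrm{Re}\,z>0$; note also that the generator is $-\mathcal{H}$.

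The genuine difference is in the case $m\ge 2$. The paper composes the bounds for $K^1_\lambda$ and sandwiches the Gaussian factors between kernels of the degenerate heat semigroup $e^{r\Delta_\omega}$, which requires their two-sided Gaussian bounds. It then uses the semigroup law of $e^{r\Delta_\omega}$ to produce the factor $(t-s)$, and finally compares $\omega_r$ with $\omega_{kr}$ by doubling. This yields only inequalities, and the Gaussian constants degrade with $m$.

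You instead exploit the Chapman--Kolmogorov structure of $\Gamma$ itself, encoded in the evolution semigroup. This gives the exact identity $K^m_\lambda=\frac{(t-s)^{m-1}}{(m-1)!\,\lambda^{2m}}e^{-(t-s)/\lambda^2}\Gamma$. The upper and lower bounds for every $m$ then follow at once, with the same $c$ and with $m$ entering only through $(m-1)!$. The price is that you need $\Gamma(t,r)\Gamma(r,s)=\Gamma(t,s)$ and strong continuity of the evolution family to identify it with $e^{-\tau\mathcal{H}}$.

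You can bypass that identification entirely in two ways. One is to differentiate the $m=1$ formula $(z+\mathcal{H})^{-1}f(t)=\int_0^\infty e^{-z\tau}\Gamma(t,t-\tau)f(t-\tau)\,\mathrm{d}\tau$, valid for $\mathrm{Re}\,z>0$, $(m-1)$ times in $z=\lambda^{-2}$. The other is to compose $K^1_\lambda$ with itself and apply Chapman--Kolmogorov directly.

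Both routes rest on the same quoted two-sided bounds for $\Gamma$ with the normalisation $1/\omega_{t-s}(y)$ against $\mathrm{d}\omega(y)\,\mathrm{d}s$, and your $c_\mu=\cos 2\mu$ argument for sectors is the paper's.
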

The case $\lambda>0$ in (1) above is the one most used in what follows. The case $\lambda\in S^+_{\mu}$ is used for the weak-type $(1,1)$ estimate. The proof is given in Appendix \ref{annexe 1}.
\begin{rems}\label{rem: Cruz}
\begin{enumerate}[label=(\arabic*), leftmargin=*, itemsep=0pt]
    \item  The factor $\frac{1}{\omega_{t-s}(y)}$ in the upper bounds in (1) and (2) of Lemma
    \ref{lem: bornes gaussiennes} above may be replaced by one of the following factors
    \begin{equation*}
    \frac{1}{\omega_{t-s}(x)}, \ \ \frac{1}{\sqrt{\omega_{t-s}(x)}\sqrt{\omega_{t-s}(y)}}, \ \ \frac{1}{\max(\omega_{t-s}(x),\omega_{t-s}(y))},
    \end{equation*}
    and $C$ and $c$ are replaced respectively by $\widetilde{C}=\widetilde{C}(C,c,[\omega]_{A_2})$ and $2c$. For details, see \cite[Rem. 3]{cruz2014corrigendum}. The same statement holds for the lower bound in (1), with a minimum in the third option. We will use this fact freely.
    \item By adjointness, Lemma \ref{lem: bornes gaussiennes} also holds for $(\mathcal{E}^\star_\lambda)^m$ by interchanging $t$ and $s$, and $x$ and $y$.
\end{enumerate}
\end{rems}
We also state the following general lemma, which is valid for complex coefficients. Its proof, and that of Lemma~\ref{lem: sans m}, is based on the same Calder\'on reproducing formula and follows \emph{verbatim} from \cite[Lemma 6.5]{AEbook2023}; although we only need the case $p=q$, we state it in full generality.
\begin{lem}\label{lem2: sans m}
Let $1\le p \le q <\infty $ with $\frac{n+2}{p}-\frac{n+2}{q}<1$. Assume that there exists an integer $m\ge1$ such that the family $(\omega^{\frac{1}{2}} \mathcal{E}_\lambda^{m+1} \omega^{-\frac{1}{2}})_{\lambda>0}$ is $\L^p-\L^q$ bounded. Then, the family $( \omega^{\frac{1}{2}} \mathcal{E}_\lambda  \omega^{-\frac{1}{2}})_{\lambda>0}$ is $\L^p-\L^q$ bounded.
\end{lem}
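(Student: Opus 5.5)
We argue through a Calderón reproducing formula that writes $\E_\lambda$ as an integral over scales of $\E_s^{m+1}$ composed with terms which are $\L^q$-bounded at every scale, in the spirit of \cite[Lemma 6.5]{AEbook2023}. Everything is carried out for the conjugated operators $\widetilde{\E}_\lambda:=\omega^{\frac12}\E_\lambda\omega^{-\frac12}$, acting on $\L^2(\R^{n+1})$. Conjugation commutes with the functional calculus of $\H$: one has $\omega^{\frac12}f(\H)\omega^{-\frac12}=f(\widetilde{\H})$ with $\widetilde{\H}:=\omega^{\frac12}\H\omega^{-\frac12}$ maximal accretive on $\L^2$. So the algebra of \cite{AEbook2023} transfers unchanged.

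\textbf{Step 1: reproducing formula.} Take $u\in\L^p\cap\L^2$. Start from the identity
\begin{equation*}
\E_\lambda - \E_\lambda^{m+1}=\sum_{k=1}^{m}\bigl(\E_\lambda^{k}-\E_\lambda^{k+1}\bigr)=\sum_{k=1}^m \lambda^2\H\,\E_\lambda^{k+1}.
\end{equation*}
A more efficient version integrates in the scale. Write $\E_\lambda=\E_\lambda^{m+1}-\int_\lambda^\infty \partial_s(\ldots)\,\d s$, where one uses that $\E_s^{m+1}\E_\lambda\to 0$ strongly as $s\to\infty$ on $\L^2$, by injectivity of $\H$. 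The outcome is a representation
\begin{equation*}
\widetilde{\E}_\lambda u=\widetilde{\E}_\lambda^{m+1}u+c_m\int_\lambda^\infty \widetilde{\E}_s^{m+1}\,\bigl(s^2\widetilde{\H}\,\widetilde{\E}_s\bigr)\,\widetilde{\E}_\lambda u\,\frac{\d s}{s}.
\end{equation*}
Here each integrand contains an $\L^p-\L^q$ bounded factor $\widetilde{\E}^{m+1}_s$, which contributes $s^{\gamma_{p,q}}$. The remaining factor $s^2\widetilde{\H}\widetilde{\E}_s\widetilde{\E}_\lambda=(1-\widetilde{\E}_s)\widetilde{\E}_\lambda$ is $\L^p$-bounded, with a gain $\lambda^2/s^2$ once $s\gg\lambda$. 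That gain comes from writing it as $(s^2/\lambda^2)^{-1}\cdot\lambda^2\widetilde{\H}\widetilde{\E}_\lambda\cdot(1-\widetilde{\E}_s)\cdots$. Concretely, we arrange the formula so that the operator adjacent to $u$ is $\lambda^2\widetilde{\H}\widetilde{\E}_\lambda\cdot(\text{bounded})$ times $(\lambda/s)^2$. Its $\L^p$-boundedness must come from the same hypothesis. From $\L^p-\L^q$ bounds of $\widetilde{\E}^{m+1}$ we only know $\L^p$-boundedness of $\widetilde{\E}_\lambda$ after extrapolation. To avoid circularity, we place all factors acting on $u$ first as $\widetilde{\E}_s^{m+1}$, and put the commuting bounded pieces afterwards on the $\L^q$ side, where $\L^q$-boundedness of resolvents is available by duality/interpolation. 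Commutation in the functional calculus allows this.

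\textbf{Step 2: estimate.} Combining the pieces gives
\begin{equation*}
\|\widetilde{\E}_\lambda u\|_q\lesssim \lambda^{\gamma_{p,q}}\|u\|_p+\int_\lambda^\infty s^{\gamma_{p,q}}\Bigl(\frac{\lambda}{s}\Bigr)^{2}\frac{\d s}{s}\,\|u\|_p .
\end{equation*}
The integral converges because $\gamma_{p,q}=(n+2)(\frac1q-\frac1p)\in(-1,0]$, by the hypothesis $\frac{n+2}{p}-\frac{n+2}{q}<1$. Indeed, it equals $\lambda^{\gamma_{p,q}}\int_1^\infty \sigma^{\gamma_{p,q}-2}\,\d\sigma$. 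The hypothesis is in fact only needed where a single power $\lambda/s$ is gained, namely if the formula is organized through $\lambda\D$-type factors. In our plan we would aim for the $(\lambda/s)^2$ gain, and then the condition is comfortably sufficient.

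\textbf{Main obstacle.} The delicate point is the uniform $\L^q$-boundedness, at scale $s$, of the bounded factors sitting after $\widetilde{\E}_s^{m+1}$, such as $(1-\widetilde{\E}_s)$ and $\widetilde{\E}_\lambda$. We are only given $\L^p-\L^q$ bounds for $\widetilde{\E}^{m+1}$. We would first try to place all extra factors on the $\L^2$ side when $q=2$. For general $q\le 2$, we would obtain $\L^q$-bounds from the $\L^p-\L^q$ bound of $\widetilde{\E}^{m+1}_\lambda$ combined with $\L^2$ off-diagonal estimates, as in Lemma~\ref{lem: Toolbox}. If that fails, we would fall back on the verbatim argument of \cite[Lemma 6.5]{AEbook2023}, which handles exactly this ordering issue.
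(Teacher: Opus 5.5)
There is a genuine gap, located exactly at the ``main obstacle'' you flag.

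\textbf{Problems with your formula and its use.} First, the representation is not correct. Since $\partial_s \E_s^{m+1} = -2(m+1)s\H\E_s^{m+2}$ and $\E_s^{m+1}\to 0$ strongly as $s\to\infty$, one has $\int_\lambda^\infty \widetilde{\E}_s^{m+1}(s^2\widetilde{\H}\widetilde{\E}_s)\widetilde{\E}_\lambda\,\frac{\d s}{s} = \frac{1}{2(m+1)}\widetilde{\E}_\lambda^{m+2}$. Your identity would then say $\E_\lambda-\E_\lambda^{m+1}=c\,\E_\lambda^{m+2}$, which already fails for the symbols.

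Second, the claimed gain is wrong. The symbol of $(1-\E_s)\E_\lambda$ is $\frac{s^2z}{(1+s^2z)(1+\lambda^2z)}$, which is of size about $1$ at $z=(s\lambda)^{-1}$ when $s\gg\lambda$.

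Third, and most important: any formula integrating over scales $s>\lambda$ carries, besides $\widetilde{\E}_s^{m+1}$, extra factors such as $\widetilde{\E}_\lambda$ or $1-\widetilde{\E}_s$. You would need their uniform $\L^q$-boundedness. For $p=q<2$, the only case the paper uses, this is the conclusion itself, so the scheme is circular. Interpolating the hypothesis with $\L^2$ off-diagonal estimates only produces bounds for the power $\widetilde{\E}_\lambda^{m+1}$, not for these factors. Deferring to the reference at that point does not close the argument.

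Your treatment of the hypothesis shows the same problem. The integral $\int_\lambda^\infty s^{\gamma_{p,q}}(\lambda/s)^j\,\frac{\d s}{s}$ converges for every $\gamma_{p,q}\le 0$ and $j\ge 1$, so the condition $\gamma_{p,q}>-1$ never enters at large scales.

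\textbf{The missing idea: integrate over small scales.} Use an exact identity that contains no extra factors. From $\partial_s\E_s=-2s\H\E_s^2$ you get
\begin{equation*}
\partial_s\bigl(s^{2k}\E_s^k\bigr)=2ks^{2k-1}\E_s^{k+1}\bigl((1+s^2\H)-s^2\H\bigr)=2ks^{2k-1}\E_s^{k+1}.
\end{equation*}
Since $s^{2k}\E_s^k\to 0$ as $s\to 0$, conjugation gives
\begin{equation*}
\widetilde{\E}_\lambda^{k}=\frac{2k}{\lambda^{2k}}\int_0^\lambda s^{2k-1}\,\widetilde{\E}_s^{k+1}\,\d s \qquad \text{on } \L^2(\R^{n+1}),\quad k\ge 1 .
\end{equation*}
Pairing with $v\in \L^{q'}\cap\L^2$ yields
\begin{equation*}
\norm{\widetilde{\E}_\lambda^{k}u}_q\le \frac{2kC}{\lambda^{2k}}\int_0^\lambda s^{2k-1+\gamma_{p,q}}\,\d s\,\norm{u}_p=\frac{2kC}{2k+\gamma_{p,q}}\,\lambda^{\gamma_{p,q}}\norm{u}_p .
\end{equation*}
A downward induction from $k=m$ to $k=1$ then proves the lemma. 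No $\L^q$-bound for anything other than the next power of the resolvent is ever used.

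This small-scale mechanism is what the Calder\'on-type argument imported from \cite[Lemma 6.5]{AEbook2023} provides. It also explains the hypothesis. In the gradient version (Lemma~\ref{lem: sans m}), $\lambda\D\E_\lambda=\frac{2}{\lambda}\int_0^\lambda s\D\E_s^2\,\d s$ leads to $\int_0^\lambda s^{\gamma_{p,q}}\,\d s$, which converges exactly when $\gamma_{p,q}>-1$. For resolvents, as here, the condition is more than enough.
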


Finally, we will purposedly omit to verify the local finiteness of $\mu_{p'}$ in the definition of $p_-(\mathcal{H})$, as this will be trivially guaranteed by the reverse H\"older condition.

\begin{proof}[Proof of Point~\ref{Point (3)} of Theorem~\ref{thm: théorème principal}]
We divide the proof into several steps.
\newline
\paragraph{\textit{\textbf{Step 1:} For all $q \in (1,2]$, $\omega \in RH_{\frac{q'}{2}}(\R^n) \, \Rightarrow \, p_-(\H) < q$.}} We fix $q\in (1,2]$ and assume that $\omega \in RH_{\frac{q'}{2}}(\R^n)$. For $m \ge 1$, $u \in \L^2_\mu(\R^n)$, we set, for all $(x,t)\in \R^{n+1}$,
\begin{equation*}
    v(x,t):= \lambda^{-2m} \iint_{\R^{n+1}} \one_{t>s}\, e^{-\frac{t-s}{\lambda^2}} (t-s)^{(m-1)}\frac{e^{-\frac{1}{c}\frac{|x-y|^2}{t-s}}}{\omega_{t-s}(y)} |u(s,y)| \d \omega(y) \d s.
\end{equation*}
Using Fubini's theorem and then Minkowski's inequality, we have for all $t\in \R$,
\begin{align*}
    \norm{v(\cdot,t)}_{\L^{q'}_{\omega^{q'/2}}(\R^n)} &\le\lambda^{-2m} \int_{\R} \one_{t>s}\, e^{-\frac{t-s}{\lambda^2}} (t-s)^{(m-1)}  \left\| \int_{\R^n}\frac{e^{-\frac{1}{c}\frac{|\cdot-y|^2}{t-s}}}{\omega_{t-s}(y)} |u(s,y)| \d \omega(y) \right\|_{\L^{q'}_{\omega^{q'/2}}(\R^n)} \!\!\!\!\!\!\!\! \d s
    \\& \le C \lambda^{-2m} \int_{\R} \one_{t>s}\, e^{-\frac{t-s}{\lambda^2}} (t-s)^{(m-1)} (t-s)^{\frac{n}{2}(\frac{1}{q'}-\frac{1}{2})}\norm{u(\cdot,s)}_{2,\omega} \d s,
\end{align*}
where $C = C([ \omega ]_{A_2},[ \omega ]_{RH_{\frac{q'}{2}}},n,q)$, and we have used \cite[Lemma 3.7]{auscherbaadi2025hardy} in the last line, since $\omega \in RH_{\frac{q'}{2}}(\R^n)$. Using Young's convolution inequality on $\R$ with $\ell>1$ such that $\frac{1}{2}+\frac{1}{\ell}=1+\frac{1}{q'}$, we deduce that 
\begin{align*}
    \norm{v}_{\L^{q'}_{\mu_{q'}}(\R^{n+1})} &\le C \lambda^{-2m} \left ( \int_{0}^{+\infty} e^{-\ell\frac{t}{\lambda^2}} \, t^{\ell(m-1)} \, t^{\frac{\ell n}{2}(\frac{1}{q'}-\frac{1}{2})} \d t  \right )^{1/\ell} \norm{u}_{2,\mu} \\&= C \left ( \int_{0}^{+\infty} e^{-\ell t} \, t^{\ell(m-1)} \, t^{\frac{\ell n}{2}(\frac{1}{q'}-\frac{1}{2})} \d t  \right )^{1/\ell} \lambda^{(n+2)\left( \frac{1}{q'}-\frac{1}{2} \right)} \, \norm{u}_{2,\mu}.
\end{align*}
We then choose $m\ge 1$ so that the above integral is finite (integrability near $0$). By the upper bound in (1) of Lemma~\ref{lem: bornes gaussiennes} and the above computations, we now know that the family $\bigl( \omega^{\frac{1}{2}} \E^m_\lambda \, \omega^{-\frac{1}{2}} \bigr)_{\lambda>0}$ is $\L^2-\L^{q'}$ bounded. The same is true for the family $\bigl( \omega^{\frac{1}{2}} (\E^\star_\lambda)^m \, \omega^{-\frac{1}{2}} \bigr)_{\lambda>0}$. By adjointess, the family $\bigl( \omega^{\frac{1}{2}} \E^m_\lambda \, \omega^{-\frac{1}{2}} \bigr)_{\lambda>0}$ is then $\L^q-\L^2$ bounded. Fix $p\in (q,2]$. By interpolating with the $\L^2$ off-diagonal estimates for the family $(\omega^{\frac{1}{2}} \,\E^{m}_\lambda \, \omega^{-\frac{1}{2}})_{\lambda>0}$ (see Proposition~\ref{prop: classical ODEs} and Lemma~\ref{lem: Toolbox}, Point~(2)), we deduce that this family satisfies $\L^p-\L^2$ off-diagonal estimates. Consequently, it is $\L^p$-bounded by Lemma~\ref{lem: Toolbox}, Point~(3). By Lemma~\ref{lem2: sans m}, the family $(\omega^{\frac{1}{2}} \,\E_\lambda \, \omega^{-\frac{1}{2}})_{\lambda>0}$ is $\L^p$-bounded. As this true for all $p\in (q,2]$, then $p_-(\H) \le q$. Finally, to prove that $p_-(\H) < q$, we use the open-ended property of reverse H\"older classes (Proposition~\ref{prop:weights}, Point~(2)) to pick $\tilde{q}\in (1,q)$ such that $\omega \in RH_{\frac{\tilde{q}'}{2}}(\R^n)$. Then, by the same reasoning, $p_-(\H) \le \tilde{q}<q$.
\newline
\paragraph{\textit{\textbf{Step 2:} For all $q \in (1,2]$, $ p_-(\H) < q \, \Rightarrow \, \omega \in RH_{\frac{q'}{2}}(\R^n)$.}} 
We fix $q \in (1,2]$ and assume that $p_-(\H) < q$. Let $\tilde{q} \in (p_-(\H),q)$. By Section~\ref{s42}, there exists $\varrho_\omega=\varrho_\omega([\omega]_{A_2},n) \in (1,2)$ such that the family $\bigl(\omega^{\frac{1}{2}} \mathcal{E}_\lambda \,\omega^{-\frac{1}{2}}\bigr)_{\lambda>0}$ is $\L^{\varrho_\omega}-\L^2$ bounded. Since it is also $\L^{\tilde{q}}$- and $\L^2$-bounded, Lemma~\ref{lem: gives m} yields the existence of $m \ge 1$ such that the family $\bigl( \omega^{\frac{1}{2}} \E^m_\lambda \, \omega^{-\frac{1}{2}} \bigr)_{\lambda>0}$ is $\L^q-\L^2$ bounded. By adjointess, $\bigl( \omega^{\frac{1}{2}} (\E^\star_\lambda)^m \, \omega^{-\frac{1}{2}} \bigr)_{\lambda>0}$ is $\L^2-\L^{q'}$ bounded. Since the conclusion will be the same, we assume for simplicity that $\bigl(\omega^{\frac{1}{2}} \mathcal{E}_\lambda^m \,\omega^{-\frac{1}{2}}\bigr)_{\lambda>0}$ is $\L^2-\L^{q'}$ bounded. Using the lower bound in (1) of Lemma~\ref{lem: bornes gaussiennes}, we have
\begin{align*}
    \lambda^{-2q'm} \iint_{\R^{n+1}} &\left ( \iint_{\R^{n+1}} \mathbf{1}_{t>s}
e^{-\frac{t-s}{\lambda^2}}
(t-s)^{(m-1)}\frac{e^{-c\frac{|x-y|^2}{t-s}}}{\omega_{t-s}(y)}  u(y,s) \d \omega(y) \d s \right)^{q'} \omega^{\frac{q'}{2}}(x) \d x \d t \\&\hspace{9cm}\le C^{q'} \lambda^{(n+2)\left( 1-\frac{q'}{2} \right)} \norm{u}^{q'}_{2,\mu},
\end{align*}
for all nonnegative $u \in \L^2_\mu(\R^{n+1}) \cap \L^{q'}_{\mu_{q'}}(\R^{n+1})$. Let $Q \subset \R^n$ be a cube with sides parallel to the coordinate axes and side length $\lambda >0$. Taking $u=\one_{Q\times [0,\lambda^2]}$ yields
\begin{align*}
&\lambda ^{-2q'm}  \int_{2\lambda^2}^{3\lambda^2}  \int_Q  \left ( \int_{0}^{\lambda^2}  \int_Q e^{-\frac{t-s}{\lambda^2}}(t-s)^{(m-1)}\frac{e^{-c\frac{|x-y|^2}{t-s}}}{\omega_{t-s}(y)} \d \omega(y) \d s  \right)^{q'}  \omega^{\frac{q'}{2}}(x) \d x \d t \\&\hspace{9cm}\le C^{q'} \lambda^{(n+2)\left( 1-\frac{q'}{2} \right)}(\lambda^2 \omega(Q))^{\frac{q'}{2}}.
\end{align*}
Since $\lambda^2 \le t-s \le 3\lambda^2$, we obtain
\begin{align*}
  \int_Q \left (  \int_Q \frac{e^{-c\frac{|x-y|^2}{\lambda^2}}}{\omega_{3\lambda^2}(y)}  \d \omega(y)\right)^{q'} \omega^{\frac{q'}{2}}(x) \d x\le e^{3q'} C^{q'} \lambda^{n\left( 1-\frac{q'}{2} \right)} \omega(Q)^{\frac{q'}{2}}.
\end{align*}
Using the doubling and reverse doubling properties of $\omega$, together with norm equivalence on $\R^n$, we easily see that there exists a constant $\gamma=\gamma(n,[\omega]_{A_2},c)>0$ such that, for all $\lambda>0$,
\begin{equation*}
    \gamma \leq \inf_{x\in Q} \int_Q \frac{e^{-\frac{c|x-y|^2}{\lambda^2}}}{\omega_{3\lambda^2}(y)}  \d \omega(y).
\end{equation*}
Thus,
\begin{equation*}
    \int_{Q} \omega^{q'/2}(x) \, \mathrm d x \leq e^{3q'} \gamma^{-q'} C^{q'} |Q|^{1-\frac{q}{p}} \omega(Q)^{\frac{q'}{2}}. 
\end{equation*}
Therefore, $\omega \in RH_{\frac{q'}{2}}(\R^n)$. 
\newline
\paragraph{\textit{\textbf{Step 3:} $\omega \in RH_{\star}(\R^n)$ if and only if $p_-(\H)=1$.}} 
If $\omega \in RH_{\star}(\R^n)$, then by \textbf{Step~1} we have that $p_-(\H)<q$ for all $q\in(1,2]$. Therefore, $p_-(\H)=1$. Conversely, if $p_-(\H)=1$, then $\omega \in RH_{\star}(\R^n)$ by \textbf{Step~3}.
\newline
\paragraph{\textit{\textbf{Step 4:} The case $\omega \in RH_{\infty}(\R^n)$.}} We know that $RH_{\infty}(\R^n) \subset RH_{\star}(\R^n)$, but let us prove the desired results in a direct manner. Using the upper bounds in (1) of Lemma \ref{lem: bornes gaussiennes} with $m=1$, for every $\lambda>0$ and every $u \in \L^1(\R^{n+1}) \cap \L^2(\R^{n+1})$, Fubini's theorem yields
\begin{align*}
    &\norm{\omega^{\frac{1}{2}}\E_\lambda \, \omega^{-\frac{1}{2}} u}_1 \leq \iint_{\R^{n+1}} \left( \iint_{R^{n+1}} |K^1_\lambda(x,t;y,s)| |u(y,s)| \omega^{\frac{1}{2}}(y) \, \mathrm{d}y \, \mathrm{d}s \right) \omega^{\frac{1}{2}}(x) \, \mathrm{d}x \, \mathrm{d}t
    \\& \le C \iint_{\R^{n+1}} \left( \iint_{R^{n+1}} \one_{t>s}\, \frac{e^{-\frac{t-s}{\lambda^2}}}{\lambda^2} \frac{e^{-\frac{1}{c}\frac{|x-y|^2}{t-s}}}{\sqrt{\omega_{t-s}(x)}\sqrt{\omega_{t-s}(y)}} |u(y,s)| \omega^{\frac{1}{2}}(y) \, \mathrm{d}y \, \mathrm{d}s \right) \omega^{\frac{1}{2}}(x) \, \mathrm{d}x \, \mathrm{d}t \\
    &=C \iint_{\R^{n+1}} |u(y,s)|  \left( \iint_{R^{n+1}} \one_{t>s} \, \frac{e^{-\frac{t-s}{\lambda^2}}}{\lambda^2} e^{-\frac{1}{c} \frac{|x-y|^2}{t-s}}  \left ( \frac{\omega(x)}{\omega_{t-s}(x)} \right )^{\frac{1}{2}}  \left ( \frac{\omega(y)}{\omega_{t-s}(y)} \right )^{\frac{1}{2}} \mathrm{d}x \, \mathrm{d}t \right) \, \mathrm{d}y \, \mathrm{d}s.
\end{align*}
Now, since $\omega \in RH_{\infty}(\R^n)$, for all $s<t$ and for a.e.\ $x \in \R^n$ and for a.e.\ $y \in \R^n$, we have
\begin{equation}\label{eq: RHinfini}
    \left ( \frac{\omega(x)}{\omega_{t-s}(x)} \right )^{\frac{1}{2}}  \left ( \frac{\omega(y)}{\omega_{t-s}(y)} \right )^{\frac{1}{2}} \le \frac{[\omega]_{RH_\infty}}{(t-s)^{n/2}}. 
\end{equation}
By using this and a change of variables in the above computations, we obtain
\begin{equation*}
    \norm{\omega^{\frac{1}{2}}\E_\lambda \, \omega^{-\frac{1}{2}} u}_1 \leq C \, [\omega]_{RH_\infty} \left( {\pi}{c} \right)^{n/2} \norm{u}_1. 
\end{equation*}
It follows that $p_-(\H) = 1$. More than that, we may reuse the upper bounds in (1) of Lemma \ref{lem: bornes gaussiennes} to show that there exists $m \ge 1$ such that the family $(\omega^{\frac{1}{2}}\E^m_\lambda \, \omega^{-\frac{1}{2}})_{\lambda>0}$ satisfies $\L^1-\L^2$ off-diagonal estimates. Indeed, if $E,F \subset \R^{n+1}$ are measurable sets, $\d := \d(E,F)$ is the parabolic distance between $E$ and $F$, and $u \in \L^1(\R^{n+1}) \cap \L^2(\R^{n+1})$, then for all $m \ge 1$:
\begin{align*}
    &\norm{\one_F \, \omega^{\frac{1}{2}} \E_\lambda^m \, \omega^{-\frac{1}{2}} (\one_E u)}^2_2 
    \le \iint_{\R^{n+1}} \one_F \left( \iint_{R^{n+1}}|K^m_\lambda(x,t;y,s)| |\one_E u|(y,s) \,  \omega^{\frac{1}{2}}(y) \, \mathrm{d}y \, \mathrm{d}s \right)^2 \mathrm{d}\omega(x) \, \mathrm{d}t
    \\& \le C^2 \iint_{\R^{n+1}}\left( \iint_{R^{n+1}} \mathbf{1}_{t>s} \, \one_{|x-y|^2+|t-s| - \d^2 \ge 0} \, \frac{e^{-\frac{t-s}{\lambda^2}}}{\lambda^{2m}}
(t-s)^{(m-1)} e^{-\frac{1}{c}\frac{|x-y|^2}{t-s}} |\one_E u|(y,s)
\right. \\
    &\hspace{7cm} \left. \left ( \frac{\omega(x)}{\omega_{t-s}(x)} \right )^{\frac{1}{2}}  \left ( \frac{\omega(y)}{\omega_{t-s}(y)} \right )^{\frac{1}{2}}   \, \mathrm{d}y \, \mathrm{d}s  \right)^2  \, \mathrm{d}x \, \mathrm{d}t.
\end{align*}
Using \eqref{eq: RHinfini} and Young's convolution inequality on $\R^{n+1}$ yields:
\begin{align*}
   \norm{\one_F \, \omega^{\frac{1}{2}} \E_\lambda^m \, \omega^{-\frac{1}{2}} (\one_E u)}_2 &\le C \, [\omega]_{RH_\infty} \left( \iint_{R^{n+1}} \one_{t>0} \, \frac{e^{-2\frac{t}{\lambda^2}}}{\lambda^{4m}} \frac{e^{-\frac{2}{c} \frac{|x|^2}{t}}}{t^{n-2(m-1)}} \one_{|x|^2+t - \d^2\ge 0} \, \mathrm{d}x \, \mathrm{d}t \right)^{1/2} \norm{\one_E u}_1
    \\& =  \frac{C \, [\omega]_{RH_\infty}}{\lambda^{2m}} \left( \int^{\infty}_{0}  \frac{e^{-2\frac{t}{\lambda^2}}}{t^{n/2-2(m-1)}}  \left ( \int_{|x|^2\ge \frac{\d^2}{t}-1} e^{-\frac{2}{c} |x|^2} \, \mathrm{d}x \right ) \, \mathrm{d}t \right)^{1/2} \norm{\one_E u}_1
    \\& \leq e^{\frac{1}{2c}} \left( {\pi}{c} \right)^{n/4} \frac{C \, [\omega]_{RH_\infty}}{\lambda^{2m}} \left( \int^{\infty}_{0}  \frac{e^{-2\frac{t}{\lambda^2}} e^{-\frac{1}{c} \frac{\d^2}{t}}}{t^{n/2-2(m-1)}}  \, \mathrm{d}t \right)^{1/2} \norm{\one_E u}_1
    \\&= C \, [\omega]_{RH_\infty} e^{\frac{1}{2c}} \left( {\pi}{c} \right)^{n/4} \lambda^{\frac{n+2}{2}-\frac{n+2}{1}} \left( \int^{\infty}_{0}  \frac{e^{-2t} e^{-\frac{1}{c} \frac{\d^2}{\lambda^2} \frac{1}{t}}}{t^{n/2-2(m-1)}}  \, \mathrm{d}t \right)^{1/2} \norm{\one_E u}_1,
\end{align*}
where we have used a change of variables in $x$ in the second line and in $t$ in the last line, and wrote $e^{-\frac{2}{c}|x|^2}=e^{-\frac{1}{c}|x|^2}e^{-\frac{1}{c}|x|^2}$ in the second line to bound the expression by a Gaussian integral. We then choose $m \ge 1$ such that $n/2 - 2(m-1) < 1$, so that the above integral is finite (integrability near $0$ when $\d = 0$). The result then follows immediately by writing
\begin{align*}
    \int^{\infty}_{0}  \frac{e^{-2t} e^{-\frac{1}{c} \frac{\d^2}{\lambda^2} \frac{1}{t}}}{t^{n/2-2(m-1)}}  \, \mathrm{d}t &= \int^{\d /\lambda}_{0}  \frac{e^{-2t} e^{-\frac{1}{c} \frac{\d^2}{\lambda^2} \frac{1}{t}}}{t^{n/2-2(m-1)}}  \, \mathrm{d}t + \int^{\infty}_{\d /\lambda}  \frac{e^{-2t} e^{-\frac{1}{c} \frac{\d^2}{\lambda^2} \frac{1}{t}}}{t^{n/2-2(m-1)}}  \, \mathrm{d}t \\&\le  \left( \int^{\infty}_{0}  \frac{e^{-t} }{t^{n/2-2(m-1)}}  \, \mathrm{d}t \right) e^{-\frac{1}{c} \frac{\d}{\lambda}},
\end{align*}
where the last estimate follows from elementary inequalities.

Furthermore, using the same estimates as above, we show that the family $(\omega^{\frac{1}{2}}\mathcal{E}_\lambda \omega^{-\frac{1}{2}})_{\lambda>0}$ satisfies $\L^1$ off-diagonal estimates. By the same computations and using (2) of Lemma \ref{lem: bornes gaussiennes}, the family $(\omega^{\frac{1}{2}}\mathcal{E}_\lambda \omega^{-\frac{1}{2}})_{\lambda \in S^+_{\mu}}$, for any fixed $\mu \in (0,\nicefrac{\pi}{4})$, satisfies $\L^1$ off-diagonal estimates, with constants depending additionally on $\mu$. In particular, Lemma~\ref{lem: funct calculus} holds with $q=1$.

Finally, proving that the spatial gradient component $\omega^{\frac{1}{2}} \nabla_x \H^{-1/2}$ defines a bounded operator from $\L^1_{\mu_1}(\R^{n+1})$ to $\L^{1,\infty}(\R^{n+1})$ is equivalent to the weak-type $(1,1)$ boundedness of $\omega^{\frac{1}{2}}\nabla_x \mathcal{H}^{-1/2} \omega^{-\frac{1}{2}}$ from $\L^1(\R^{n+1})$ to $\L^{1,\infty}(\R^{n+1})$. To prove the latter, we use the above $\L^1-\L^2$ bounds for $(\omega^{\frac{1}{2}}\E^m_\lambda \, \omega^{-\frac{1}{2}})_{\lambda>0}$ and the $\L^1$ off-diagonal estimates of arbitrarily large order for the family $(\omega^{\frac{1}{2}}\mathcal{E}_\lambda \omega^{-\frac{1}{2}})_{\lambda \in S^+_{\mu}}$ for a fixed $\mu \in (0,\nicefrac{\pi}{4})$ (more precisely, Lemma~\ref{lem: funct calculus} with $q=1$ therein), together with the fact that the family $( \omega^{\frac{1}{2}} \lambda\nabla_x \mathcal{E}_\lambda \omega^{-\frac{1}{2}})_{\lambda>0}$ satisfies $\L^2$ off-diagonal estimates. These ingredients allow for the extrapolation of the weak-type $(1,1)$ boundedness of $\omega^{\frac{1}{2}}\nabla_x \mathcal{H}^{-1/2} \omega^{-\frac{1}{2}}$ from $\L^1(\R^{n+1})$ to $\L^{1,\infty}(\R^{n+1})$, exactly as in the classical elliptic case \cite[Sect.~7.1]{AEbook2023}. Indeed, one uses the extrapolation criterion of Theorem~\ref{thm: BK} in the classical homogeneous-scale case $N=2$, yielding the result directly, without resorting to any iterative argument. We omit the details.
\end{proof}

\begin{rem}
We have already seen that there exists $\varrho_\omega \in (1,2)$ such that the family $\left( \omega^{\frac{1}{2}} \mathcal{E}_\lambda \, \omega^{-\frac{1}{2}} \right)_{\lambda>0}$ is $\L^{\varrho_\omega}-\L^2$ bounded, but we can see this under Gaussian bounds directly. Indeed, by Proposition~\ref{prop:weights}, Point~(3), we have $\omega \in RH_{\frac{\varrho'}{2}}(\R^n)$ for some $\varrho \in (1,2)$ sufficiently close to $2$ so that
\begin{equation*}
    \frac{n}{2}\left(\frac{1}{2}-\frac{1}{\varrho'}\right) < 1+\frac{1}{\varrho'}-\frac{1}{2}.
\end{equation*}
Thus, for $\ell>1$ such that $\frac{1}{2}+\frac{1}{\ell}=1+\frac{1}{\varrho'}$, we have
\begin{equation*}
    \int_{0}^{+\infty} e^{-\ell t} \, t^{\frac{\ell n}{2}\left(\frac{1}{\varrho'}-\frac{1}{2}\right)} \d t < \infty.
\end{equation*}
We then proceed as in \textbf{Step~1} with $m=1$ to conclude that the family $\bigl( \omega^{\frac{1}{2}} \E_\lambda \, \omega^{-\frac{1}{2}} \bigr)_{\lambda>0}$ is $\L^\varrho-\L^2$ bounded.
\end{rem}

\begin{rem}
The Gaussian upper bounds on sectors in point (2) of Lemma \ref{lem: bornes gaussiennes} may not be used. They are only used in \textbf{Step~4} to obtain Lemma~\ref{lem: funct calculus} with $q=1$. However, $\L^1$-boundedness of $(\omega^{\frac{1}{2}}\mathcal{E}_\lambda \omega^{-\frac{1}{2}})_{\lambda>0}$ extends to a small sector $S^+_\mu$ by a Neumann series argument, see the proof of \cite[Lemma 7.1]{BEK26}. Interpolation with the $\L^1$ off-diagonal estimates on $(0,\infty)$ then gives $\L^1$ off-diagonal estimates on, say, $S^+_{\mu/2}$, which suffices to obtain Lemma~\ref{lem: funct calculus} with $q=1$. Nevertheless, since we had to prove the Gaussian bounds on $(0,\infty)$ anyway, and the proof directly gives the bounds on $S^+_\mu$ for any $\mu\in(0,\nicefrac{\pi}{4})$, we chose to state the Gaussian bounds as in Lemma~\ref{lem: bornes gaussiennes}, which makes the statement more explicit and the argument less involved.
\end{rem}

\section{Reverse inequalities via duality: Proof of Corollary~\ref{cor: reverse}}\label{s7}

Fix $u \in \Cont_0^\infty(\R^{n+1})$ and $p\in [2,p_-(\H^\star)')$. For $\phi \in \mathrm{ran}(\sqrt{\H^\star}) \cap \L^{p'}_{\mu_{p'}}(\R^{n+1})$, we have
\begin{align*}
    \langle \H^{\nicefrac{1}{2}} u , \phi \rangle_{2,\mu} &= \langle \H^{\nicefrac{1}{2}} u , (\H^\star)^{\nicefrac{1}{2}} (\H^\star)^{-\nicefrac{1}{2}} \phi \rangle_{2,\mu} \\&= \langle \H u ,  (\H^\star)^{-\nicefrac{1}{2}} \phi \rangle_{2,\mu} 
    \\& = \langle H_t D_t^{1/2} u , D_t^{1/2} (\H^\star)^{-\nicefrac{1}{2}} \phi \rangle_{2,\mu} + \langle \omega^{-1} A \nabla_x u , \nabla_x (\H^\star)^{-\nicefrac{1}{2}} \phi \rangle_{2,\mu}.
\end{align*}
Thus, since the Hilbert transform is bounded on $\L^r(\R)$ for every $1<r<\infty$, we obtain
\begin{equation*}
    |\langle \H^{\nicefrac{1}{2}} u , \phi \rangle_{2,\mu}| \lesssim \norm{\mathbb{D}u}_{\L^p_{\mu_p}(\R^{n+1})} \norm{\mathbb{D} (\H^\star)^{-\nicefrac{1}{2}} \phi}_{\L^{p'}_{\mu_{p'}}(\R^{n+1})}.
\end{equation*}
Now, since $p' \in (p_-(\H^\star),2]$, we obtain, by Point~\ref{Point (2)} of Theorem~\ref{thm: théorème principal} applied to $\H^\star$, that
\begin{equation*}
    |\langle \H^{\nicefrac{1}{2}} u , \phi \rangle_{2,\mu}| \lesssim \norm{\mathbb{D}u}_{\L^p_{\mu_p}(\R^{n+1})} \norm{\phi}_{\L^{p'}_{\mu_{p'}}(\R^{n+1})}.
\end{equation*}
The result follows once one proves that $\mathrm{ran}(\sqrt{\H^\star}) \cap \L^{p'}_{\mu_{p'}}(\R^{n+1})$ is dense in $\L^2_\mu(\R^{n+1}) \cap \L^{p'}_{\mu_{p'}}(\R^{n+1})$. This follows exactly as in \cite[Lemma 7.2]{AEbook2023} since $p'\in (p_-(\H^\star),2]$. 

\qed 

\begin{rem}
If $\omega \in RH_{1+\frac{2}{n}}(\R^n)$, one may directly take $\phi \in \Cont_0^\infty(\R^{n+1})$ by Corollary~\ref{cor: many functions}.
\end{rem}

\section{Final comments and open questions}\label{s8}

We conclude with some comments and open questions.
\newline
\paragraph{\textit{\textbf{1- Weak-type estimates at the endpoints $p_-(\H)$.}}}
In all the previous results, except for $\omega^{\frac{1}{2}}\nabla_x \mathcal{H}^{-1/2}$ when $\omega \in RH_{\infty}(\R^n)$ with real coefficients, it is not known whether weak-type $(p,p)$ estimates for $\mathcal{R}_{\H}$ hold at the endpoints $p_-(\H)$. For instance, the weak-type $(1,1)$ estimate for $\omega^{\frac{1}{2}} D_t^{1/2} \mathcal{H}^{-1/2}$ remains open in the case of real coefficients, even when $\omega=1$.
\newline
\paragraph{\textit{\textbf{2- Weak-type $(1,1)$ for $\omega^{\frac{1}{2}} \nabla_x \mathcal{H}^{-1/2}$ for $\omega \in RH_{\star}(\R^n)$ and real coefficients.}}}
We know that $p_-(\H)=1$, but it is not clear whether the spatial component $\omega^{\frac{1}{2}} \nabla_x \mathcal{H}^{-1/2}$ extends to a bounded operator from $\L^1_{\mu_1}(\R^{n+1})$ to $\L^{1,\infty}(\R^{n+1})$, or whether such boundedness is specific to weights in $RH_{\infty}(\R^n)$.
\newline
\paragraph{\textit{\textbf{3- Optimal upper bound for $p_-(\H)$ when $\omega\in RH_{1+\frac{2}{n}}(\R^{n})$.}}}
We know that $p_-(\H)\le 2_\star$. It is reasonable to expect that $p_-(\H)\le 2_\star-\varepsilon_{\H}$ for some $\varepsilon_{\H}>0$, as in the unweighted case \cite[Theorem 6.1]{BEK26}. Such a result would follow from an application of Shneiberg's stability theorem~\cite{Shneiberg}. However, complex interpolation for the weighted parabolic Sobolev spaces $\L^p(\R;\mathrm{W}_{\omega^{\nicefrac{p}{2}}}^{1,p}(\R^n))\cap \mathrm{H}^{\frac{1}{2},p}(\R;\L^p_{\omega^{\nicefrac{p}{2}}}(\R^n))$ is not known, and one would also need an $\L^p$-version of the weighted parabolic Sobolev embeddings in Proposition~\ref{lem: Sobolev} as a second ingredient; see \cite[Appendix A]{BEK26} and \cite{GopalaRao} for the unweighted case. Let us just mention that $\varepsilon_{\H}>0$ is best possible for all parabolic operators when $n\ge2$, as is already the case in the unweighted setting \cite[Proposition 9.1]{BEK26}.
\newline
\paragraph{\textit{\textbf{4- Relation between $p_-(\H)$ and $q_-(\H)$.}}}
Although Proposition~\ref{prop: p_=q_} suffices for our purposes, it is natural to ask whether equality holds, as in the unweighted case. This remains open and would require an $\L^p$ weighted parabolic Sobolev embedding, as discussed above.
\newline
\paragraph{\textit{\textbf{5- Reverse inequalities.}}} Characterize the range of $p$ for which \eqref{eq: reverse} holds.
\newline
\paragraph{\textit{\textbf{6- Case $p>2$.}}}
The case $p>2$ remains open, even when $\omega=1$. One would expect an upper exponent $q_+(\H)>2$, as in the elliptic case \cite{auscher2007necessary,cruz2017kato}, but this is not yet understood.
\newline
\paragraph{\textit{\textbf{7- Case where $\omega$ is the ambient measure.}}}
Recall that $\mathrm{d}\mu(x,t) = \omega(x)\,\mathrm{d}x\,\mathrm{d}t$ on $\R^{n+1}$. An interesting question is whether, and for which $p<2$, the Riesz transform $\mathcal{R}_\H$ extends to a bounded operator on $\L^p_{\mu}(\R^{n+1})$. It is reasonable to expect this to hold for some range $1<p<2$. More precisely, one would then define
\begin{align*}
    \tilde{p}_-(\H) 
    &:= \inf \left\{ p \in( 1,\infty) : \ \text{the family} \ ( \E_\lambda )_{\lambda>0} \ \text{is uniformly bounded on} \ \L_\mu^p(\R^{n+1}) \right\}, \\
    \tilde{q}_-(\H) 
    &:= \inf \left\{ p \in( 1,\infty) : \ \text{the family} \ (\lambda \mathbb{D} \E_\lambda )_{\lambda>0} \ \text{is uniformly bounded on} \ \L_\mu^p(\R^{n+1})  \right\},
\end{align*}
and expect $\mathcal{R}_\H$ to be bounded on $\L^p_{\mu}(\R^{n+1})$ for each $p\in (\tilde{p}_-(\H),2)$. The strategy would be to replace the unweighted $\L^p-\L^q$ off-diagonal estimates, conjugated by $\omega^{\frac{1}{2}}$ and $\omega^{-\frac{1}{2}}$, with weighted $\L^p_\mu-\L^q_\mu$ off-diagonal estimates on parabolic cubes and stretched annuli (by $2^j$ in $x$ and $N^j$ in $t$), denoted by $\mathcal{O}(\L^p_\mu-\L^q_\mu)$, by taking averages as in the elliptic case \cite{cruz2017kato}. We are unable to prove them at this time and we explain what can be done and the difficulties to conclude. First, let us note that the off-diagonal estimates stated in Theorem \ref{thm: parabolicODEs} still hold when $\omega$ is interpreted as the ambient measure, by exactly the same proof:
\begin{prop}\label{prop: odes w ambient}
Let $\varrho \in (\max(\tilde{p}_-(\H),\tilde{q}_-(\H)),2]$. Let $p\in(\varrho,2]$ or $p=\varrho=2$.
Fix $N\ge 4$ and an integer $m \ge 1$. Then there exists a constant $C$ such that the off-diagonal estimates
\begin{align*}
    \|\one_F \,  \lambda \D\mathcal{E}^m_\lambda \left( \one_E u \right) \|_{\L_\mu^p(\R^{n+1})}
        \le C\biggl(\frac{\lambda}{r} +\Bigl(\frac{\lambda}{r}\Bigr)^{4N}\biggr)
        N^{-j \varepsilon}
        \|\one_E u\|_{\L_\mu^p(\R^{n+1})}
\end{align*}
holds for all $\lambda, r >0$, all $j \ge 2$ and all $u \in \L_\mu^p(\R^{n+1}) \cap \L_\mu^2(\R^{n+1})$, with $E$, $F$, and $\varepsilon$ as in scenarios (i), (i) bis, and (ii) of Theorem \ref{thm: parabolicODEs}.
\end{prop}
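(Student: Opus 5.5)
The plan is to rerun the proof of Theorem~\ref{thm: parabolicODEs} (that is, of \cite[Theorem 4.10]{BEK26}) directly on $\L^p_\mu(\R^{n+1})$, with no conjugation by $\omega^{\frac{1}{2}}$. That proof only uses the measure through three features, each available here:
\begin{enumerate}[label=(\alph*), leftmargin=*, itemsep=0pt]
\item multiplication by cut-off functions;
\item duality;
\item Fubini's theorem in the product structure $\R^n\times\R$.
\end{enumerate}
All three hold for $\d\mu=\omega(x)\d x\d t$, because $\mu$ is a product measure and the time direction carries Lebesgue measure. Concretely, the ingredients are the three propositions below, re-proved in $\L^\varrho_\mu$.

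\textbf{Step 1: $\L^2_\mu$ estimates.} Proposition~\ref{prop: classical ODEs} and Proposition~\ref{prop: L2 spatial supports} are in fact proved in $\L^2_\mu$: the proof of Proposition~\ref{prop: L2 spatial supports} states the estimate with $\|\cdot\|_{2,\mu}$ norms, and the conjugation is only a reformulation. So these give $\L^2_\mu$ off-diagonal estimates for $\E_\lambda$ and $\lambda\nabla_x\E_\lambda$, spatial $\L^2_\mu$ off-diagonal estimates for $\lambda D_t^{1/2}\E_\lambda$, and $\L^2_\mu$ boundedness of $\lambda\D\E_\lambda$.

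\textbf{Step 2: temporal supports in $\L^\varrho_\mu$.} I would redo Proposition~\ref{prop: time supports} with $p=\varrho$ in $\L^\varrho_\mu$, using the same commutator identity $[1+\lambda^2\H^\star,\eta]=-\lambda^2\partial_t\eta$. By the choice $\varrho>\max(\tilde p_-,\tilde q_-)$, the families $\E_\lambda$ and $\lambda D_t^{1/2}\E_\lambda$ are bounded on $\L^\varrho_\mu$. Duality with respect to $\langle\cdot,\cdot\rangle_{2,\mu}$ then gives that $\E^\star_\lambda$ and $\lambda\E^\star_\lambda D_t^{1/2}$ are bounded on $\L^{\varrho'}_\mu$.

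This step is simpler than in the conjugated setting. Because $\mu$ is locally finite and $\Cont_0^\infty$ is dense in $\L^{\varrho'}_\mu$, the local finiteness hypothesis on $\mu_{p'}$ is no longer needed. The representation formula \eqref{eq: Dt representation} acts in $t$ only, for a.e.\ fixed $x$. Hence the kernel estimates of \cite[Prop.~4.4]{BEK26} apply fiberwise in $\L^{\varrho'}(\R,\d t)$ and are then integrated against $\omega(x)\d x$ via Fubini. This yields the stated $N^{-j\varepsilon}$ decay in both scenarios.

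\textbf{Step 3: combination, interpolation, powers.} Next I would split $E$ and $F$ into spatial and temporal parts exactly as in the figure and \cite[Theorem 4.10]{BEK26}.
\begin{itemize}[leftmargin=*, itemsep=0pt]
\item On pieces where the spatial separation dominates, use the exponential $\L^2_\mu$ bounds from Step 1.
\item On pieces where the temporal separation dominates, use the polynomial bounds from Step 2.
\end{itemize}
To pass from the endpoints $\varrho$ and $2$ to $p\in(\varrho,2]$, I would use Riesz--Thorin on $\L^r_\mu$. Applied to $\one_F T\one_E$, it interpolates the $\L^2_\mu$ off-diagonal estimates with the $\L^\varrho_\mu$ estimates, and this is why the decay exponent $\varepsilon$ comes out depending on $p$ as stated. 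For $m\ge1$, write $\lambda\D\E^m_\lambda=(\lambda\D\E_\lambda)\E_\lambda^{m-1}$ and compose off-diagonal bounds across annuli, as in the composition principle of Lemma~\ref{lem: Toolbox}, which holds verbatim for $\mu$ since only Hölder's inequality and the triangle inequality are used.

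\textbf{Main obstacle.} I expect the delicate point to be the scenario (i) bis and the $(\lambda/r)^{4N}$ factor. These require summing contributions over the dyadic-in-space, $N$-adic-in-time decomposition, and the summation must not pick up measure-dependent constants. Since every estimate used is operator-norm based, with no averages and no Sobolev embedding, $\omega$ never enters quantitatively, and the sums are the same as in \cite[Remark 4.14]{BEK26}. The case $p=\varrho=2$ follows from Step 1 alone.
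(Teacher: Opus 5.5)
Your route is the paper's: it only asserts that the proof of Theorem~\ref{thm: parabolicODEs} (that is, \cite[Theorem 4.10]{BEK26}) runs verbatim in $\L^p_\mu(\R^{n+1})$. Your Steps 1 and 2 are correct, including the points that local finiteness of $\mu$ removes the hypothesis on $\mu_{p'}$ and that the representation formula for $D_t^{1/2}$ is used fiberwise in $x$. But Step 3 relies on a mechanism that, as written, does not give the stated exponent.

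You attribute the $p$-dependence of $\varepsilon(p)=1+\frac{1}{1+p'}$ in scenario (i) to Riesz--Thorin between $\varrho$ and $2$. Interpolating $\one_F T\one_E$ with $\frac1p=\frac{1-\theta}{\varrho}+\frac{\theta}{2}$ gives the exponent $(1-\theta)\varepsilon(\varrho)+\theta\varepsilon(2)$, which is affine in $s=\frac1p$. In contrast, $\varepsilon(p)=1+\frac{1-s}{2-s}$ is strictly concave in $s$. So for $p\in(\varrho,2)$ you get strictly less decay than $N^{-j\varepsilon(p)}$.

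It is actually worse. Your Step 1 contains no temporal decay for the $D_t^{1/2}$-component in $\L^2_\mu$, since Proposition~\ref{prop: L2 spatial supports} only sees spatial separation. On the temporal part of $C^N_j(\Delta_r)$, where the spatial distance to $\Delta_r$ is zero, interpolation therefore only yields $N^{-j(1-\theta)\varepsilon(\varrho)}$, which degenerates as $p\to 2$. For the same reason, the case $p=\varrho=2$ does not follow from Step 1 alone: it needs the temporal estimate of Step 2 at exponent $2$.

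The repair is what the unweighted argument actually does, and it splits into two parts.
\begin{enumerate}[label=(\alph*), leftmargin=*, itemsep=0pt]
\item Use interpolation only for the exponential estimates: those for $\E_\lambda$ and $\lambda\nabla_x\E_\lambda$, and the spatial estimate for $\lambda D_t^{1/2}\E_\lambda$. Interpolating them against mere $\L^\varrho_\mu$-boundedness keeps exponential decay as soon as $\theta>0$. This is precisely why $p>\varrho$ is required.
\item Obtain the temporal estimate by running Step 2 directly at the target exponent $p$. This is legitimate because $\E_\lambda$ and $\lambda D_t^{1/2}\E_\lambda$ are $\L^p_\mu$-bounded for every $p\in[\varrho,2]$, including $p=2$, by interpolating boundedness.
\end{enumerate}
The exponent $1+\frac{1}{1+p'}$ and the threshold $\gamma=\frac{2+p'}{2+2p'}$ then come out of that argument itself. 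You apply H\"older's inequality in $\L^{p'}$ against the tail of $|t-s|^{-3/2}$ at time distance $\sim N^{2j}r^2$. You then balance this against the term $\lambda^2\|\partial_t\eta\|_\infty$ by letting the cutoff $\eta$ transition over a time-length $\sim N^{2j\gamma}r^2$. Both terms then decay like $N^{-j(1+\frac{1}{1+p'})}$.
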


In the case of complex coefficients, the difficulty is that there seems to be no way to prove $\tilde{p}_-(\H)<2$ using only the available information on $\L^2_\mu(\R^{n+1})$: boundedness of the parabolic gradient of the resolvent family, and boundedness and off-diagonal estimates for the resolvent family. In the elliptic case, one uses a local Sobolev inequality \cite[Theorem~15.26]{heinonen2018nonlinear} yielding higher integrability, always with respect to $\d\omega$, together with a duality argument. In the parabolic case, such an inequality is not known for the parabolic gradient $\D=(\nabla_x,D_t^{1/2})$ yielding higher integrability with respect to $\d\mu$, not $\d\mu_{q'}$ as in Proposition \ref{lem: Sobolev}.

In the case of real coefficients, Gaussian bounds (Lemma~\ref{lem: bornes gaussiennes}) are available, and no Sobolev inequality argument is needed. In this case, as expected, we can show that
\begin{equation*}
    \tilde{p}_-(\H)=\tilde{q}_-(\H)=1.
\end{equation*}
We briefly explain the proof. By the upper Gaussian bounds in (1) of Lemma~\ref{lem: bornes gaussiennes}, we have
\begin{equation*}
    |\E_\lambda u(x,t)| \le C (M_t M_\omega u)(x,t),
\end{equation*}
for some constant $C>0$, all $\lambda>0$, all $u\in \L^2_\mu(\R^{n+1})$, and almost all $(x,t)\in\R^{n+1}$, where $M_tM_\omega$ denotes the iterated maximal function on $\R^{n+1}$ with respect to $\d t$ and $\d\omega$. This implies that $\tilde{p}_-(\H)=1$. To prove that $\tilde{q}_-(\H)=1$, several non-trivial arguments are required. The first is the following boundedness result:
\begin{lem}
    There exists an integer $m\ge 1$ such that for all $p,q\in (1,\infty)$ with $p<q$:
    \begin{equation*}
        \norm{\E^m_\lambda \one_{C_j^N(\Delta_\lambda)}u}_{L^q_\mu(\R^{n+1})} \le C N^{2jm} \mu(C_j^N(\Delta_\lambda))^{\frac{1}{q}-\frac{1}{p}} \norm{\one_{C_j^N(\Delta_\lambda)}}_{L^p_\mu(\R^{n+1})},
    \end{equation*}
    where $C$ is a constant depending on $N$, $p$, $q$, and the structural constants.
\end{lem}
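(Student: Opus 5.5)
We prove the estimate by bounding the kernel of $\E^m_\lambda$ with the upper Gaussian bounds in (1) of Lemma~\ref{lem: bornes gaussiennes}, in the setting where $\d\mu=\omega\,\d x\,\d t$ is the ambient measure. We read the right-hand side as $\norm{\one_{C_j^N(\Delta_\lambda)}u}_{\L^p_\mu}$. Write $C_j:=C_j^N(\Delta_\lambda)$. Then
\begin{equation*}
|\E^m_\lambda(\one_{C_j}u)(x,t)|\le \frac{C}{\lambda^{2m}}\iint_{C_j} \one_{t>s}\, e^{-\frac{t-s}{\lambda^2}}(t-s)^{m-1}\frac{e^{-\frac{1}{c}\frac{|x-y|^2}{t-s}}}{\omega_{t-s}(y)}|u(y,s)|\d\omega(y)\d s .
\end{equation*}
The first reduction is to the case $q=\infty$. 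By the (weak-type $(1,1)$ plus) Schur-type argument, the $\L^p_\mu\to\L^q_\mu$ bound will follow from the $\L^1_\mu\to \L^\infty$ bound and the uniform $\L^p_\mu$ bound (the latter from $|\E_\lambda u|\lesssim M_tM_\omega u$, which gives $\tilde p_-(\H)=1$). Concretely, we interpolate (Riesz--Thorin) the trivial $\L^p_\mu\to\L^p_\mu$ bound with an $\L^p_\mu\to\L^\infty$ bound of size $N^{2jm}\mu(C_j)^{-1/p}$, restricted to functions supported in $C_j$. Since $\frac1q-\frac1p=\theta(0-\frac1p)$ with $\theta=1-p/q$, this produces exactly $\mu(C_j)^{\frac1q-\frac1p}$, up to a power of $N^{2jm}$ bounded by $N^{2jm}$.

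The core step is the $\L^p_\mu\to\L^\infty$ estimate for data supported in $C_j$. We apply H\"older in $(y,s)$ over $C_j$ to the display above. This reduces to bounding the kernel pointwise by $C N^{2jm}\,\mu(C_j)^{-1}$ uniformly in $(x,t)$, or at least in an $\L^{p'}_\mu(C_j)$-averaged sense. Three facts enter here.
\begin{itemize}
\item For $s\in N^{j+1}I_\lambda$ and $t>s$, the factor $\lambda^{-2m}(t-s)^{m-1}e^{-(t-s)/\lambda^2}$ is bounded by $\lambda^{-2}\,\sup_{\tau>0}\tau^{m-1}e^{-\tau}$. Alternatively one may keep $(t-s)^{m-1}\le (2N^{j+1}\lambda^2)^{m-1}$, which is where the factor $N^{2jm}$ comes from.
\item The spatial factor satisfies $\int e^{-|x-y|^2/(c\tau)}\omega_\tau(y)^{-1}\d\omega(y)\lesssim 1$, uniformly in $x$, by doubling.
\item Hence the kernel integrated in $y$ and integrated in $s$ over a time interval of length $\sim N^{j+1}\lambda^2$ is $\lesssim N^{2jm}$. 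To obtain a sup-bound with the normalization $\mu(C_j)^{-1}$, we instead compare $\omega_{t-s}(y)$ with $\omega(2^{j+1}Q_\lambda)$. When $t-s\gtrsim\lambda^2$ this follows from doubling, since $\omega(2^{j+1}Q_\lambda)\le C (2^{j}\lambda/\sqrt{t-s})^{nD}\omega_{t-s}(y)$ and $2^j\le N^j$. The region $t-s\ll\lambda^2$ is handled by choosing $m$ large, so that $(t-s)^{m-1}$ compensates the blow-up of $\omega_{t-s}(y)^{-1}$; this uses reverse doubling, i.e.\ $\omega_\tau(y)\gtrsim (\tau/\lambda^2)^{n\delta/2}\omega_{\lambda^2}(y)$. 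This is the reason an integer $m$, independent of $p$ and $q$, is needed.
\end{itemize}

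The main obstacle is this last step. We need the pointwise lower bound $\omega_{t-s}(y)\gtrsim N^{-2jm'}\,\omega(2^{j+1}Q_\lambda)$ in the large-time regime, and control of the small-time singularity uniformly in $x$. We also need the bookkeeping of $\mu(C_j)\simeq N^{j}\lambda^2\,\omega(2^{j+1}Q_\lambda)$ against the time integration length, so that all losses are powers of $N^j$ that can be absorbed into $N^{2jm}$ after enlarging $m$. We would first try to do this pointwise, using doubling with dimension $D=\log_2[\omega]_{\mathrm{doub}}$ and choosing $m>nD/2+1$, and then conclude by the interpolation described in the first paragraph.
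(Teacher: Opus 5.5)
Your argument is correct. The paper states this lemma without proof; it only indicates that, for real coefficients, the Gaussian bounds of Lemma~\ref{lem: bornes gaussiennes} are the tool. Your route fills this in the natural way:
\begin{enumerate}
\item[(a)] a pointwise bound on the kernel of $\E^m_\lambda$ for $(y,s)\in C_j:=C_j^N(\Delta_\lambda)$;
\item[(b)] H\"older in $(y,s)$, giving an $\L^p_\mu\to\L^\infty$ bound of size $N^{2jm}\mu(C_j)^{-1/p}$;
\item[(c)] interpolation with the uniform $\L^p_\mu$ bound coming from $|\E_\lambda u|\lesssim M_tM_\omega u$.
\end{enumerate}
Step (c) is simply $\norm{f}_q\le\norm{f}_p^{p/q}\norm{f}_\infty^{1-p/q}$; no Riesz--Thorin is needed.

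What you call the main obstacle closes at once. Since $\omega(B(y,r))\ge[\omega]_{A_2}^{-1}(r/R)^{2n}\omega(B(y,R))$ for $r\le R$, take $m\ge n+1$. Then both regimes $t-s<\lambda^2$ and $t-s\ge\lambda^2$ give, uniformly in $(x,t)$,
\begin{equation*}
K^m_\lambda(x,t;y,s)\lesssim\frac{1}{\lambda^2\,\omega(B(y,\lambda))}\lesssim\frac{2^{2jn}}{\lambda^2\,\omega(2^{j+1}Q_\lambda)}\le\frac{2N^{2}\,2^{2jn}N^{2j}}{\mu(C_j)},\qquad (y,s)\in C_j .
\end{equation*}
The last step uses only the upper bound $\mu(C_j)\le 2N^{2j+2}\lambda^2\,\omega(2^{j+1}Q_\lambda)$. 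For $N\ge 2$ the right-hand side is at most $2N^2N^{2jm}\mu(C_j)^{-1}$.

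There are a few slips to correct.
\begin{enumerate}
\item[(1)] The small-ball lower bound $\omega_\tau(y)\gtrsim(\tau/\lambda^2)^{n}\omega_{\lambda^2}(y)$ for $\tau<\lambda^2$ is doubling, not reverse doubling.
\item[(2)] Your alternative $(t-s)^{m-1}\le(2N^{j+1}\lambda^2)^{m-1}$ is invalid, because $t$ ranges over all of $\R$.
\item[(3)] Your large-time comparison must carry $\max\{1,(2^j\lambda/\sqrt{t-s})^{nD}\}$. Once $B(y,\sqrt{t-s})$ contains $2^{j+1}Q_\lambda$, you only have $\omega(2^{j+1}Q_\lambda)\le\omega_{t-s}(y)$, with no gain.
\item[(4)] With the paper's convention, $|N^{j+1}I_\lambda|=2N^{2j+2}\lambda^2$, not of order $N^{j}\lambda^2$.
\end{enumerate}

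Finally, your step $2^j\le N^j$ needs $N\ge 2$. This is harmless under the standing assumption $N\ge4$ of Proposition~\ref{prop: odes w ambient}, but it is not cosmetic. For $N$ close to $1$, test the heat equation on a parabolic cube of radius $\lambda$ inside $C_j$: this shows that no $m$ independent of $N$ can work.
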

Combining this result with Proposition \ref{prop: odes w ambient}, a variant of Proposition \ref{prop: MtMx}, and a variant of Lemma \ref{lem: sans m}, we can implement an iteration argument starting from $2$ to prove that $\tilde{q}_-(\H)=1$.

Let us now explain where our extrapolation argument for the boundedness of $R_\H$ gets blocked when following the proof strategy in the unweighted case \cite[Theorem 7.3.]{BEK26}. First, an adapted version of the extrapolation criterion, Theorem~\ref{thm: BK}, is available. When applying it, one needs three $\L^p_\mu-\L^q_\mu$ boundedness estimates for $\E_\lambda^\beta$. Using Gaussian bounds, one then needs to estimate, with an upper bound involving $\norm{\one_E u}^q_{\L^p_\mu(\R^{n+1})}$, the quantity given by
\begin{equation*}
    \iint_{F} \left( \iint_E \mathbf{1}_{t>s} \frac{e^{-\frac{t-s}{\lambda^2}}}{\lambda^{2\beta}}(t-s)^{(\beta-1)}\frac{e^{-\frac{1}{c}\frac{|x-y|^2}{t-s}}}{\omega_{t-s}(y)} |u(y,s)| \d \mu(y,s)  \right)^{q} \d \mu(x,t).
\end{equation*}
Set $\Delta^{j-\varepsilon,\delta}_r:=2^{j-\varepsilon}Q_r \times N^{\delta j}I_r$. For the first estimate, $E=\R^{n+1}$ and $F=\Delta_r^{j-\varepsilon,\delta}$; for the second, $E=\R^{n+1}\setminus\Delta_r^{j-2\varepsilon,\delta/2}$ and $F=\R^{n+1}\setminus\Delta_r^{j-\varepsilon,\delta}$; and for the third, $E=\R^{n+1}$ and $F=\R^{n+1}\setminus\Delta_r^{j-\varepsilon,\delta}$. The first and second estimates can be obtained using elementary H\"older inequalities, the finite measure of $F$ for the first, and the positive parabolic distance between $E$ and $F$ for the second. However, we do not know how to handle the third estimate, as $F$ has infinite measure and $F\subset E$. In the unweighted case, one uses Young's convolution inequality, which is also the spirit of the argument when the measure is interpreted as a multiplication operator.

\appendix

\section{Proof of Lemma \ref{lem: bornes gaussiennes}}\label{annexe 1}

Fix $\lambda \in \C$ such that $\mathrm{Re}(\lambda^{-2})>0$, and let $f \in \L^2_\mu(\R^{n+1})$. We treat the case $m=1$ and explain in the final step how to obtain the result for general $m$. We set $u:= \E_\lambda f = (1+\lambda^2 \H)^{-1} f$. Then $u \in \mathrm{E}_\mu$ and $u$ is a weak solution to the equation
\begin{equation}\label{eq: parabolic eq}
    \partial_t u -\omega^{-1} \mathrm{div}_x(A\nabla_x u)+\lambda^{-2}u = \lambda^{-2}f. 
\end{equation}
Let $\Gamma = \Gamma(t,s)_{t,s \in \R}$ be the fundamental solution for $\H$. We refer to \cite{baadi2025degenerate} for its definition, as well as for the notion of weak solutions and the distributional duality in this degenerate setting. By \cite{baadi2025degenerate,ataei2024fundamental}, we know that for all $t > s$, $\Gamma(t,s)$ is an integral operator on $\L^2_\omega(\R^n)$ with kernel $\Gamma(x,t;y,s)$, having bounds
\begin{equation}\label{eq: bounds}
   \frac{1}{C} \,\mathbf{1}_{t>s}
\frac{e^{-c\frac{|x-y|^2}{t-s}}}{\omega_{t-s}(y)}  \le \Gamma(x,t;y,s)
\le C \,\mathbf{1}_{t>s}
\frac{e^{-\frac{1}{c}\frac{|x-y|^2}{t-s}}}{\omega_{t-s}(y)},
\end{equation}
where $C>1$ and $c>1$ are constants depending only on $M$, $\nu$, $[\omega]_{A_2}$, and $n$. Note that $\Gamma(t,t) = I_d$, the identity operator on $\L^2_\omega(\R^n)$ and $\Gamma(t,s) = 0$ whenever $t < s$. For all $(x,t)\in \R^{n+1}$, we set 
\begin{equation*}
    v(x,t):=\iint_{\R^{n+1}} \lambda^{-2} \Gamma(x,t;y,s) e^{-\frac{t-s}{\lambda^2}}f(y,s) \d y \d s = \int_{-\infty}^t \lambda^{-2} e^{-\frac{t-s}{\lambda^2}}  \left ( \Gamma(t,s)f(\cdot,s) \right )(x) \d s.
\end{equation*}
$v$ is in fact the representation formula with the fundamental solution operator for $\H + \lambda^{-2}$ for the solution $u$ to the equation \eqref{eq: parabolic eq}. We provide the relevant details for completeness. 
\newline
\paragraph{\textit{\textbf{Step 1:} $v\in \L^2_\mu(\R^{n+1})$, $\nabla_x v \in \L^2_\mu(\R^{n+1})^n$ and $v$ solves \eqref{eq: parabolic eq}}} Using \eqref{eq: bounds} with $\omega_{t-s}(x)$ instead of $\omega_{t-s}(y)$ (see Remark \ref{rem: Cruz}), we easily show that, for all $(x,t) \in \R^{n+1}$, one has
\begin{equation*}
    |v(x,t)| \le C (M_t M_\omega f)(x,t)
\end{equation*}
where $C = C(M,\nu,[\omega]_{A_2},n)$ is a constant, and $M_t M_\omega$ denotes the iterated maximal function on $\R^{n+1}$ with respect to $\d t$ and $\d \omega$. Thus, $v\in \L^2_\mu(\R^{n+1})$. For the other one, by the properties of the fundamental solution, Fubini's theorem, and the Cauchy–Schwarz inequality, we have
\begin{align*}
    \norm{\nabla_x v}^2_{2,\mu} &= \iint_{\R^{n+1}}  \left | \int_{-\infty}^t \lambda^{-2} e^{-\frac{t-s}{\lambda^2}}  \left ( \nabla_x \Gamma(t,s)f(\cdot,s) \right )(x) \d s \right |^2 \d \omega(x) \d t 
    \\& \le |\lambda|^{-4} \iint_{\R^{n+1}} \left ( \int_{-\infty}^t  e^{-2\mathrm{Re}(\lambda^{-2}) (t-s)} \d s \right ) \left ( \int_{-\infty}^t |\nabla_x (\Gamma(t,s)f(\cdot,s))(x)|^2 \d s \right ) \d \omega(x) \d t 
    \\& = \frac{|\lambda|^{-4}}{2\mathrm{Re}(\lambda^{-2})}  \int_\R \left ( \int_{s}^{+\infty} \left \| \nabla_x \Gamma(t,s)f(\cdot,s) \right \|^2_{2,\omega} \d t \right ) \d s
    \\& \le C \frac{|\lambda|^{-4}}{2\mathrm{Re}(\lambda^{-2})} \int_\R \left \| f(\cdot,s) \right \|^2_{2,\omega} \d s =  \frac{C|\lambda|^{-4}}{2\mathrm{Re}(\lambda^{-2})} \norm{f}_{2,\mu}^2,
\end{align*}
where we used \cite[Theorem 2, 1]{baadi2025degenerate} in the last inequality, and $C = C(M,\nu)$ is a constant. Finally, we easily verify, by a direct computation using the properties of the fundamental solution, that $v$ solves \eqref{eq: parabolic eq}. 
\newline
\paragraph{\textit{\textbf{Step 2:} concluding the case $m=1$. }} Set $h = v - u$. Then $h \in \L^2_\mu(\R^{n+1})$, $\nabla_x h \in \L^2_\mu(\R^{n+1})^n$, and $h$ satisfies in the weak sense
\begin{equation*}
    \partial_t h - \omega^{-1} \mathrm{div}_x(A\nabla_x h) + \lambda^{-2} h = 0.
\end{equation*}
We then easily conclude that $h = 0$, either by the invertibility results for the above parabolic operator \cite{baadi2026wellposedness}, or by the Lions embedding theorem \cite{auscherbaadi2024fundamental}, by writing the energy identity and using the fact that $h \in C(\R; \L^2_\omega(\R^n))$ with limit zero as $t \to \pm \infty$, together with the ellipticity of $A$ and the assumption $\mathrm{Re}(\lambda^{-2})>0$. Thus, we have
\begin{equation*}
    K^1_\lambda(x,t;y,s) = \frac{e^{-\frac{t-s}{\lambda^2}}}{\lambda^2}\Gamma(x,t;y,s).
\end{equation*}
For $\lambda>0$, we directly conclude using the two-sided bounds in \eqref{eq: bounds}. For $\lambda \in S^+_{\mu}$, we use the upper bound in \eqref{eq: bounds} together with the fact that
\begin{equation*}
   \mathrm{Re}(\lambda^{-2})= \nicefrac{\mathrm{Re}(\lambda^2)}{|\lambda|^4} \ge \frac{\cos(2\mu)}{|\lambda|^2},
\end{equation*}
and the result follows with $c_\mu=\cos(2\mu)$.
\newline
\paragraph{\textit{\textbf{Step 3:} the case of general $m$}} For simplicity, we only treat the case $m=2$ for the upper bound when $\lambda>0$, since the lower bound and the general case follow similarly. By composition and Fubini's theorem, $\E_\lambda^2$ is an integral operator on $\L^2_\mu(\R^{n+1})$ with kernel $K^2_\lambda(x,t;y,s)$ given by the first equality below:
\begin{align*}
    0 \le K^2_\lambda(x,t;y,s)&=\iint_{\R^{n+1}} K^1_\lambda(x,t;z,\ell) K^1_\lambda(z,\ell;y,s) \d \omega(z) \d \ell 
    \\& \le C^2 \frac{e^{-\frac{t-s}{\lambda^2}}}{\lambda^4} \int_{s}^{t}\left( \int_{\R^n} \frac{e^{-c\frac{|x-z|^2}{t-\ell}}}{\sqrt{\omega_{t-\ell}(x)}\sqrt{\omega_{t-\ell}(z)}} \frac{e^{-c\frac{|z-y|^2}{\ell-s}}}{\sqrt{\omega_{\ell-s}(z)}\sqrt{\omega_{\ell-s}(y)}} \d \omega(z) \right) \d \ell.
\end{align*}
We fix $\ell \in (s,t)$. We know that, for all $r>0$, the degenerate heat semigroup $e^{r\Delta_\omega}$ has a kernel $\mathcal{K}_{r}(x,y)$ with two-sided Gaussian bounds \cite{baadi2025degenerate}: there exist two constants $K_0 = K_0([\omega]_{A_2}, n) > 1$ and $k_0 = k_0([\omega]_{A_2}, n) > 1$ such that, for almost all $x,y \in \R^n$,
\begin{equation*}
   K_0^{-1}\frac{e^{-\frac{k_0 |x-y|^2}{r}}}{\sqrt{\omega_{r}(x)}\sqrt{\omega_{r}(y)}} \le \mathcal{K}_{r}(x,y) \le K_0 \frac{e^{-\frac{|x-y|^2}{k_0 r}}}{\sqrt{\omega_{r}(x)}\sqrt{\omega_{r}(y)}} .
\end{equation*}
Thus, using first the above lower bound, then the additivity of the semigroup, and finally the above upper bound, we have
\begin{align*}
    0 \le K^2_\lambda(x,t;y,s)&\lesssim K_0^2 C^2 \frac{e^{-\frac{t-s}{\lambda^2}}}{\lambda^4} \int_{s}^{t}\left( \int_{\R^n}  \mathcal{K}_{\frac{k_0}{c}(t-\ell)}(x,z) \mathcal{K}_{\frac{k_0}{c}(\ell-s)}(z,y) \d \omega(z) \right) \d \ell
    \\&= K_0^2 C^2 \frac{e^{-\frac{t-s}{\lambda^2}}}{\lambda^4} \left( \int_{s}^{t} 1 \d \ell \right) \mathcal{K}_{\frac{k_0}{c}(t-s)}(x,y) 
    \\&\lesssim K_0^3 C^2 \frac{e^{-\frac{t-s}{\lambda^2}}}{\lambda^4} (t-s) \frac{e^{-\frac{c}{k_0^2}\frac{|x-y|^2}{t-s}}}{\sqrt{\omega_{t-s}(x)}\sqrt{\omega_{t-s}(y)}}.
\end{align*}
Here, we have used the notation $\lesssim$ since we have deliberately ignored, for $r>0$, the comparison between $\omega_r(x)$ and $\omega_{\frac{k_0}{c} r}(x)$ for simplicity. In fact, these quantities are comparable: there exist constants $\eta \in (0,1)$ and $\beta > 0$, depending only on $n$ and $[\omega]_{A_2}$, such that
\begin{equation*}
\beta^{-1}\left( \frac{|E|}{|B|} \right)^{\frac{1}{2\eta}} \leq \frac{\omega(E)}{\omega(B)} \leq \beta\left( \frac{|E|}{|B|} \right)^{2\eta},
\end{equation*}
whenever $B \subset \R^n$ is an Euclidean ball and for all measurable sets $E \subset B$. See \cite[Chapter 15.5]{heinonen2018nonlinear}. \qed

\subsubsection*{\textbf{Copyright}}
A CC-BY 4.0 \url{https://creativecommons.org/licenses/by/4.0/} public copyright license has been applied by the authors to the present document and will be applied to all subsequent versions up to the Author Accepted Manuscript arising from this submission.

\bibliographystyle{alpha}
\bibliography{references.bib}

\end{document}